\documentclass[11pt,twoside,english]{article}
\usepackage{amsmath}
\usepackage{amsfonts}
\usepackage{mathrsfs}
\usepackage{color}
\usepackage{ amsmath, amsfonts, amssymb, amsthm, amscd}
\usepackage[T1]{fontenc}
\usepackage[english]{babel}
\usepackage[noinfoline]{imsart}
\usepackage{graphicx}
\usepackage{stmaryrd}  
\usepackage{titletoc}  



\setlength{\oddsidemargin}{5mm} \setlength{\evensidemargin}{5mm}
\setlength{\textwidth}{150mm} \setlength{\headheight}{0mm}
\setlength{\headsep}{12mm} \setlength{\topmargin}{0mm}
\setlength{\textheight}{220mm} \setcounter{secnumdepth}{2}

\newtheorem{theorem}{Theorem}
\newtheorem{lemma}{Lemma}
\newtheorem{proposition}{Proposition}

\newtheorem{definition}{Definition}
\newtheorem{corollary}{Corollary}
\newtheorem{remark}{Remark}
\newtheorem{assumption}{Assumption}

\newcommand{\cC}{\ensuremath{\mathcal C}}




\newcommand{\be}{\begin{equation}}
\newcommand{\ee}{\end{equation}}
\newcommand{\beq}{\begin{eqnarray}}
\newcommand{\eeq}{\end{eqnarray}}

\newcommand{\eps}{\varepsilon}




\newcommand{\ced}{\end{proof}}

\newcommand{\red}{\color{red}}

\setlength{\parindent}{0cm}
 \allowdisplaybreaks[3]

%

\begin{document}
\begin{frontmatter}
\title{It{\^o}-Wentzell-Lions formulae for flows of full and conditional measures on semimartingales}
\date{}
\runtitle{}
\author{\fnms{Jisheng}
 \snm{LIU}\corref{}\ead[label=e1]{jsliu23@m.fudan.edu.cn}}
\address{School of Mathematical Sciences\\
Fudan University, China \\
\printead{e1}}

\author{\fnms{Jing}
 \snm{ZHANG}\corref{}\ead[label=e2]{zhang\_jing@fudan.edu.cn}}
\thankstext{T2}{Research is supported by National Key R\&D Program of China (2022YFA1006101), National Natural Science Foundation of China (12271103, 12031009) and Shanghai Science and Technology Commission Grant (21ZR140860).}
\address{School of Mathematical Sciences \\
Fudan University, China  \\
\printead{e2}}

\runauthor{J. Liu and J. Zhang}

\begin{abstract}
In this paper, we establish the It\^o-Wentzell-Lions formulae for flows of both full and conditional measures on general semimartingales. This generalizes the existing work on the It\^o-Lions formula for general semimartingales and the It\^o-Wentzell-Lions formula for It\^o processes.
The key technical components involve an appropriate approximation of random fields by cylindrical functions and localization techniques. 
Moreover, we present the specific formulae in two special cases, including It\^o-Wentzell-Lions formulae for time-space-measure-dependent functions and for functions driven by Poisson random measures.
\end{abstract}

\begin{keyword}
\kwd{It{\^o}-Wentzell formulae; General semimartingales; Full measure flows; Conditional measure flows; Cylindrical functions; Lions derivatives; Linear derivatives}
\end{keyword}
\begin{keyword}[class=AMS]
\kwd[Primary ]{60H15; 35R60; 31B150}
\end{keyword}

\end{frontmatter}

%
%
%
%
%
%
%
%
%
%
%
%
%

\section{Introduction}
It{\^o}-Wentzell formula is a general It{\^o} formula extended from deterministic functions to random fields. It was first introduced by Wentzell in \cite{ventzel1965equations}, then was generalized in \cite{kunita1981some,kunita1990stochastic,ocone1989generalized}, etc. It is sometimes called Kunita-It{\^o} formula to emphasize Kunita's contribution to the generalization. 
This key result has been widely used in the study of stochastic partial differential equations (SPDEs) and backward stochastic partial differential equations (BSPDEs), including the well-posedness, numerical methods and applications, for instance, the stochastic control problems with random coefficients (see \cite{peng1992stochastic,qiu2018viscosity}), and the representation relationship between BSPDEs and forward-backward stochastic differential equations (FBSDEs) with random coefficients (see \cite{hu2002semi} for example).\\
Recently, motivated by mean-field control problems and mean-field games, the theory of McKean-Vlasov stochastic differential equations (SDEs) has been extensively developed (see \cite{bensoussan2013mean,carmona2018probabilistic} and the references therein).  In these models, a new variable defined in the measure space, representing the distribution of the solution, is introduced.  To derive the Hamilton-Jacobi-Bellman equations for mean-field control problems (see \cite{cheung2025viscosity, pham2017dynamic}) and the master equations for mean-field games (see \cite{cardaliaguet2019master, carmona2018probabilistic, mou2019wellposedness}), the It\^o formula has been extended to the flows of probability measures, resulting in what is known as the “It{\^o}-Lions formula”.
This kind of formula was first proposed by Buckdahn et al. in \cite{buckdahn2017mean}, and for more details, the readers are referred to \cite{cardaliaguet2010notes,carmona2018probabilistic} for instance. 
Furthermore, when considering the control/game problems with random coefficients, the so called "It\^o-Wentzell-Lions formula" will be needed.\\
In order to study the mean-field SDEs driven by Brownian motion and an independent Poisson random measure, Hao and Li \cite{HaoLi} first introduced the It{\^o}-Lions formula for jump-diffusion processes. This result was later generalized by Li \cite{li2018mean} to avoid the need for the existence of the second-order mixed derivatives.
Recently, the It{\^o}-Lions formula along flows of probability measures associated with general semimartingales was established simultaneously by Guo, Pham and Wei \cite{guo2023ito} and Talbi, Touzi and Zhang \cite{talbi2023dynamic}. In \cite{guo2023ito}, they first proved the It\^o-Lions formula for cylindrical functions, whose derivatives with respect to measure are easy to compute, and then extended it to the general case via function approximation and localization techniques. In \cite{talbi2023dynamic}, the same formula was proved by using a more straightforward method, but under slightly different conditions. For details, see Remark 3.14 in \cite{guo2023ito} or Remark 3.3 in \cite{talbi2023dynamic}.
Afterwards, motivated by the mean-field systems with common noise, Guo and Zhang \cite{guo2024ito} extended the above It{\^o}-Lions formula to the case of conditional probability measures under a common filtration. Compared to the It{\^o}-Lions formula for flows of measures on diffusion processes, their new formulae introduce terms involving linear derivatives on the probability measure space. Such derivatives can characterize the behavior of both the jumps in the flows of probability measures and the jumps of the semimartingale processes.\\
In \cite{dos2023ito}, Dos Reis and Platonov constructed It{\^o}-Wentzell-Lions formulae for flows of both full and conditional  measures on continuous semimartingales driven by Brownian motions. They started with the standard It{\^o}-Wentzell formula and used the projection over empirical measures approach (as stated in \cite{carmona2018probabilistic} and Section 3 in \cite{chassagneux2014probabilistic}). The key point of this method is to approximate the distribution $\mu$ of a random variable $X$ by averaging the Dirac distributions of $N$ independent and identically distributed random variables, which allows us to approximate the It\^o-Wentzell-Lions formula by the classical It\^o-Wentzell formula. They chose the empirical projection technique rather than the cylindrical function technique because of the lack of a general Leibniz rule for interchanging the Fr{\'e}chet derivatives and the stochastic integral, which is one of the contributions of our present work. Fadle and Nizar \cite{FadleNizar} considered the state process driven by general continuous semimartingales with general conditioning. Their method follows standard arguments of It\^o calculus, combining the standard It\^o's formula and the standard limiting argument based on the dominated convergence theorem.\\
In this paper, we establish the It\^o-Wentzell-Lions formulae for flows of both full and conditional probability measures associated with general semimartingales by adopting the cylindrical technique in our framework. 
Our strategy is to first establish the It\^o-Wentzell-Lions formula for cylindrical functions based on the classical It\^o-Wentzell formula with general semimartingales, and then extend the results to the general case via approximation argument. However, this approach runs into difficulties: a direct approximation of the random fields by cylindrical functions poses significant measurability challenges.
Precisely, given that the approximation by cylindrical functions is established for deterministic univariate functions with respect to a measure, we must consider approximating functions that converge $(t,\omega)$-pointwisely. However, these approximating functions are no longer measurable with respect to the time variable and consequently lose adaptability. This loss of adaptability renders the It\^o-Wentzell-Lions formula inapplicable.
To overcome this difficulty, we approximate the random fields through a three-stage procedure: first, approximate the random fields by deterministic functions in an $N$-dimensional space with truncated semimartingales; then, approximate these deterministic functions with simple functions in time and space; and finally, approximate these simple functions by cylindrical functions. This procedure ensures the adaptedness of the approximating sequence.\\
We also derive explicit expressions of the It\^o-Wentzell-Lions formula for two special cases. The first is for random time-space-measure-dependent functions, a result motivated by McKean-Vlasov control problems where the coefficients depend on both the state trajectory and its distribution. Secondly, we address the case where both the state process and the random field are driven by Poisson random measures, as this is the most common jump-diffusion setting in stochastic control and game problems (see \cite{meng2023optimal, oksendal2019stochastic}).\\
Finally, we explain that the cylindrical function technique is suitable for our problem. First, the empirical projection method, which is most widely used in the mean-field framework, involves high computational complexity for the approximation. Whether it remains valid for general jump semimartingales requires further investigation. Second, the time discretization approach, which is more natural when considered by analogy with the classical case, was proposed by Wu and Zhang \cite{WuZhang} to establish the functional It\^o formula in the McKean-Vlasov setting. This method may prove applicable to our problem, though under slightly different assumptions, and we leave it for future studies.
In conclusion, our approach is selected for its computational simplicity and robust applicability to general semimartingales.\\
The rest of this paper is organized as follows. In Section 2, we set the notations and recall the concepts and properties related to derivatives of functions with respect to measures. We then prove the It{\^o}-Wentzell-Lions formula for full and conditional flows of measures on semimartingales in Section 3 and Section 4, respectively. The fifth section is devoted to presenting the specific forms of formulae in two special cases. Finally, some useful lemmas are proven in the appendix, including the Leibniz rule for interchanging the Fr\'echet derives and the It\^o integral.

\section{Preliminaries}

\subsection{Notations}
Let $(\Omega,\mathscr{F},\mathbb{F}:=(\mathscr{F}_t)_{t\geq0},\mathbb{P})$ be a standard filtered probability space satisfying the usual conditions. We assume that $\mathscr{F}$ is rich enough such that any sub-$\sigma$-algebras we mention are contained in it. Given a $\sigma$-algebra $\mathscr{G}\subset\mathscr{F}$,
$\mathbb{E}[\cdot|\mathscr{G}]$ denotes the conditional expectation on $\mathscr{G}$. For any $A\in \mathscr{F},\ \mathbb{P}(A|\mathscr{G})=\mathbb{E}[\mathbf{1}_A|\mathscr{G}]$ is the conditional probability on $\mathscr{G}$. The distribution of a random variable $X$ is denoted as $\mathbb{P}_X$.\\
Let $\mathcal{P}(\mathbb{R}^d)$ denote the set of all probability measures on $\mathbb{R}^d$, and the $p$-Wasserstein space of measures on $\mathbb{R}^d$ is defined as $$\mathcal{P}_p(\mathbb{R}^d):=\left\{\mu\in\mathcal{P}(\mathbb{R}^d)\Big|\left\lVert \mu\right\rVert_p:=\Big(\int_{\mathbb{R}^d} \,\mu(dx )\Big)^{\frac{1}{p}} <\infty \right\},$$ 
equipped with the distance $$W_p(\mu,\nu):=\inf_{\pi\in\Pi_{\mu,\nu}}\bigg(\int_{\mathbb{R}^d\times\mathbb{R}^d}  \,\left\lvert x_1-x_2\right\rvert^p \pi(dx_1,dx_2)\bigg)^{\frac{1}{p}},\quad \forall\ \mu,\ \nu\in \mathcal{P}_p(\mathbb{R}^d),$$ 
where $\Pi_{\mu,\nu}$ is the set of all the transport from $\mu$ to $\nu$. We can also see   $\Pi_{\mu,\nu}$ as a set of measures in $\mathcal{P}(\mathbb{R}^d\times\mathbb{R}^d)$ (the set of all joint probability measure on $\mathbb{R}^d\times\mathbb{R}^d$) with marginals $\mu$ and $\nu$.\\
For vectors $a,b$ in $\mathbb{R}^d$, denote $ab=a\cdot b:=\sum_{i=1}^d a_i b_i$, where $a_i,b_i$ are the $i$th components of $a$ and $b$, respectively. For matrix $A,A'\in \mathbb{R}^{d\times d}$, the trace of $A$ is defined by 
$Tr(A):=\sum_{i=1}^d A_{i,i}$. We write $Tr(A\cdot A')$ as $A:A'$ for simplicity.\\
For $k\in\mathbb{N}_0$, we denote by $C^k(\mathbb{R}^d)$ the space of all $k-$th differentiable functions on $\mathbb{R}^d$ with continuous derivatives up to $k-$th order and $C^k_b(\mathbb{R}^d)\subset C^k(\mathbb{R}^d)$ the space of functions with the up to $k-$th order derivatives being bounded and continuous on $\mathbb{R}^d$. When $k=0$, write simply $C(\mathbb{R}^d)$ and $C_b(\mathbb{R}^d)$. \\
The notation $\sharp$ before a set denotes the number of members of a finite set. If it appears as $\phi\sharp\mu$ with $\phi:\mathbb{R}^d\to \mathbb{R}^d$ a Borel measurable map and $\mu$ a probability measure on $\mathbb{R}^d$, then it denotes the image of $\mu$ by $\phi$ and is a probability measure satisfying
\begin{eqnarray*}
\int_{\mathbb{R}^d} \,f(x)\phi\sharp\mu(dx)=\int_{\mathbb{R}^d} \,f(\phi(y))\mu(dy),\quad \forall f\in C_b(\mathbb{R}^d).
\end{eqnarray*}
When we say $\tilde{X}$ is a copy of the random variable $X$ defined on $(\Omega,\mathscr{F},\mathbb{P})$, it means that $\tilde{X}$ is a lifted random variable defined on twin stochastic spaces $(\tilde{\Omega},\tilde{\mathscr{F}},\tilde{\mathbb{P}})$, having the same law as $X$. In other words, we can form a new probability space $(\Omega,\mathscr{F},\mathbb{P})\otimes(\tilde{\Omega},\tilde{\mathscr{F}},\tilde{\mathbb{P}})$ and consider the variable $\tilde{X}(\omega,\tilde{\omega}):=X'(\tilde{w})$, where $X'$ defined on $(\tilde{\Omega},\tilde{\mathscr{F}},\tilde{\mathbb{P}})$ has the same law as $X$. $\tilde{\mathbb{E}}[\cdot]$ denotes the expectation acting on the model twin space $\tilde{\Omega}$.

\subsection{Differentiability of functions of probability measures}
In this paper, we will use two types of derivatives of functions with respect to probability measures. One is the Lions derivative defined by the Fr{\'e}chet derivative on a Hilbert space, the other is the linear derivative defined directly on $\mathcal{P}_2(\mathbb{R}^d)$. In this subsection, we recall the definitions and propositions that will be used. For details, the readers are referred to, for example, \cite{cardaliaguet2010notes,cardaliaguet2019master,carmona2018probabilistic,gangbo2019differentiability}.\\
We denote by $L^2(\Omega;\mathbb{R}^d)$ the set of all $\mathbb{R}^d$-valued square-integrable random variables on $(\Omega,\mathscr{F},\mathbb{P})$ satisfying $\left\| X\right\|_{L^2}:=\left[\mathbb{E}|X|^2\right]^{\frac{1}{2}}<\infty$. \\
If $\mathscr{F}$ is rich enough to be atomless (i.e. for any $E\in \mathscr{F}$ with $\mathbb{P}(E)>0$, there exists $E'\in \mathscr{F}$ with $E'\subset E$ and $0<\mathbb{P}(E{'})<\mathbb{P}(E)$), for any map $U:\mathcal{P}_2(\mathbb{R}^d)\to \mathbb{R}$, 
we consider a lift $\widetilde{U} $ defined on $L^2(\Omega;\mathbb{R}^d)$ as following
\begin{eqnarray*}
\widetilde{U}(X):=U(\mathbb{P}_{X}) ,\ \ \ \ X\in L^2(\Omega;\mathbb{R}^d).
\end{eqnarray*}
Note that since $X\in L^2(\Omega;\mathbb{R}^d)$, we have $\mathbb{P}_{X}\in \mathcal{P}_2(\mathbb{R}^d)$ by definition, which makes $\tilde{U}$ well-defined.\\
The main point to define the Lions derivative is that $L^2(\Omega;\mathbb{R}^d)$ is a Hilbert space, in which the notion of Fr{\'e}chet differentiability makes sense. Precisely, 
$\widetilde{U}$ is said to be Fr{\'e}chet differentiable at $X_0$ if and only if there exists a linear continuous mapping $D\widetilde{U}(X_0):L^2(\Omega;\mathbb{R}^d)\to \mathbb{R} $ such that
\begin{eqnarray*}
\widetilde{U}(X)- \widetilde{U}(X_0)=\mathbb{E}[D\widetilde{U}(X_0)(X-X_0)]+o(\left\lVert X-X_0\right\rVert_{L^2} ), \ \mbox{as}\ \left\lVert X-X_0\right\rVert_{L^2}\to 0.
\end{eqnarray*}
It is shown in \cite{carmona2018probabilistic} (Proposition 5.25) that the law of $D\widetilde{U}(X_0)$ depends on $X_0$ only via its law $\mathbb{P}_{X_0}$, and there exists a Borel function $h_{\mathbb{P}_{X_0}}:\mathbb{R}^d\to\mathbb{R}^d$ satisfying
\begin{eqnarray}\label{lionsderivative}
D\widetilde{U}(X_0)=h_{\mathbb{P}_{X_0}}(X_0).
\end{eqnarray}
\begin{definition}\label{Lionsderiv}
$U$ is said to be differentiable at $\mu_0:=\mathbb{P}_{X_0}\in \mathcal{P}_2(\mathbb{R}^d)$ if its lift $\widetilde{U} $ is Fr{\'e}chet differentiable at $X_0$. In this case, 
we define the function $h_{\mathbb{P}_{X_0}}$ shown in (\ref{lionsderivative}) as the Lions derivative of $U$ at $\mu_0$, denoted as $\partial_{\mu}U(\mu_0,\cdot)$.
\end{definition}
Now we introduce the linear derivative on Wasserstain space $\mathcal{P}_2(\mathbb{R}^d)$.
\begin{definition}\label{linearderiv}
We say that $U:\mathcal{P}_2(\mathbb{R}^d)\to \mathbb{R}$ is of class $\mathcal{C}^1(\mathcal{P}_2(\mathbb{R}^d))$ if there exists a jointly continuous and bounded map $\frac{\delta U}{\delta\mu}: \mathcal{P}_2(\mathbb{R}^d)\times\mathbb{R}^d\to \mathbb{R}$ such that
\begin{eqnarray*}
U(\mu')-U(\mu)=\int_{0}^{1}  \,\int_{\mathbb{R}^d} \,\frac{\delta U}{\delta\mu}((1-h)\mu+h\mu',y)(\mu'-\mu)dy dh, \quad \forall \mu,\mu'\in\mathcal{P}_2(\mathbb{R}^d).
\end{eqnarray*}
Then $\frac{\delta U}{\delta\mu}$ is called a linear derivative. Moreover, we adopt the normalization convention
\begin{eqnarray*}
\int_{\mathbb{R}^d} \,\frac{\delta U}{\delta\mu}(\mu,y)\mu(dy)=0,\quad\forall\mu\in\mathcal{P}_2(\mathbb{R}^d).
\end{eqnarray*}
\end{definition}
\begin{proposition}(\cite{cardaliaguet2019master}, Proposition 2.2.3)
Assume that $U$ is in $\mathcal{C}^1(\mathcal{P}_2(\mathbb{R}^d))$ and $\partial_y\frac{\delta U}{\delta\mu}(\mu,y)$ exists and is jointly continuous and bounded on $\mathcal{P}_2(\mathbb{R}^d)\times\mathbb{R}^d$. Then for any Borel measurable map $\phi:\mathbb{R}^d\to\mathbb{R}^d$ 
with at most linear growth, the map $s\to U((id_{\mathbb{R}^d}+s\phi)\sharp \mu)$ is differentiable at 0 and
\begin{eqnarray*}
\frac{d}{ds}U((id_{\mathbb{R}^d}+s\phi)\sharp \mu)_{|_{s=0}}=\int_{\mathbb{R}^d} \,\partial_y\frac{\delta U}{\delta\mu}(\mu,y)\phi(y)\mu(dy) ,
\end{eqnarray*}
where $id$ denotes the identity map.
\end{proposition}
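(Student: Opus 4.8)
The plan is to start from the defining integral representation of the linear derivative (Definition \ref{linearderiv}) and reduce the whole computation to a one--dimensional fundamental theorem of calculus followed by a dominated convergence argument. Fix $\mu\in\mathcal{P}_2(\mathbb{R}^d)$ and, for $s\in\mathbb{R}$, set $\mu_s:=(id_{\mathbb{R}^d}+s\phi)\sharp\mu$. Since $\phi$ has at most linear growth and $\mu\in\mathcal{P}_2(\mathbb{R}^d)$, we have $\int_{\mathbb{R}^d}|\phi(y)|^2\,\mu(dy)<\infty$, hence $\mu_s\in\mathcal{P}_2(\mathbb{R}^d)$ for every $s$ and the formula of Definition \ref{linearderiv} applies with $\mu'=\mu_s$. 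Writing $\nu^s_h:=(1-h)\mu+h\mu_s$ for $h\in[0,1]$, I would first record
\[
U(\mu_s)-U(\mu)=\int_0^1\int_{\mathbb{R}^d}\frac{\delta U}{\delta\mu}(\nu^s_h,y)\,(\mu_s-\mu)(dy)\,dh,
\]
and then apply the pushforward identity $\int_{\mathbb{R}^d}g(y)\,\mu_s(dy)=\int_{\mathbb{R}^d}g(y+s\phi(y))\,\mu(dy)$ (valid since the linear derivative is bounded and continuous in $y$) to turn the inner integral into $\int_{\mathbb{R}^d}\big[\frac{\delta U}{\delta\mu}(\nu^s_h,y+s\phi(y))-\frac{\delta U}{\delta\mu}(\nu^s_h,y)\big]\,\mu(dy)$. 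Because $y\mapsto\frac{\delta U}{\delta\mu}(\nu^s_h,y)$ is $C^1$ by hypothesis, the fundamental theorem of calculus gives, after dividing by $s\neq0$ and using Tonelli's theorem to reorder the iterated integral (the integrand being jointly measurable and dominated by $\big\|\partial_y\tfrac{\delta U}{\delta\mu}\big\|_\infty|\phi|\in L^1(\mu)$),
\[
\frac{1}{s}\big(U(\mu_s)-U(\mu)\big)=\int_0^1\!\!\int_0^1\!\!\int_{\mathbb{R}^d}\partial_y\frac{\delta U}{\delta\mu}\big(\nu^s_h,\,y+rs\phi(y)\big)\cdot\phi(y)\,\mu(dy)\,dr\,dh.
\]

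Next I would pass to the limit $s\to0$ inside this integral. The quantitative ingredient is that $W_2(\nu^s_h,\mu)\to0$ as $s\to0$, uniformly in $h\in[0,1]$: the coupling of $\mu$ with $\nu^s_h$ that leaves the fraction $1-h$ of the mass on the diagonal and transports the remaining fraction $h$ through $(id_{\mathbb{R}^d},id_{\mathbb{R}^d}+s\phi)\sharp\mu$ yields $W_2(\nu^s_h,\mu)^2\leq h\,s^2\int_{\mathbb{R}^d}|\phi(y)|^2\,\mu(dy)\leq s^2\int_{\mathbb{R}^d}|\phi(y)|^2\,\mu(dy)$. Together with $|(y+rs\phi(y))-y|\to0$, the \emph{joint} continuity of $\partial_y\frac{\delta U}{\delta\mu}$ on $\mathcal{P}_2(\mathbb{R}^d)\times\mathbb{R}^d$ forces $\partial_y\frac{\delta U}{\delta\mu}(\nu^s_h,y+rs\phi(y))\to\partial_y\frac{\delta U}{\delta\mu}(\mu,y)$ for every $(h,r,y)$; and since the integrand is dominated, uniformly in $s$, by $\big\|\partial_y\tfrac{\delta U}{\delta\mu}\big\|_\infty|\phi(y)|\in L^1\big(\mu(dy)\otimes dr\otimes dh\big)$, the dominated convergence theorem applies. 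As the $r$-- and $h$--integrations then contribute only a factor $1$, we obtain
\[
\lim_{s\to0}\frac{1}{s}\big(U(\mu_s)-U(\mu)\big)=\int_{\mathbb{R}^d}\partial_y\frac{\delta U}{\delta\mu}(\mu,y)\phi(y)\,\mu(dy),
\]
and since the argument treats $s\to0^+$ and $s\to0^-$ on equal footing, this is a genuine two--sided derivative at $0$, which is the claim.

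The main obstacle is the dominated convergence step: one must produce the uniform-in-$h$ smallness of $W_2(\nu^s_h,\mu)$ through an explicit coupling and then combine the \emph{joint} continuity of $\partial_y\frac{\delta U}{\delta\mu}$ with its boundedness to exhibit an $s$-independent integrable majorant for the triple integral. This is exactly where the linear-growth hypothesis on $\phi$ enters: it simultaneously guarantees that $\mu_s$ stays in $\mathcal{P}_2(\mathbb{R}^d)$ and that $|\phi|\in L^1(\mu)$, so that the majorant is finite. The remaining ingredients --- the pushforward change of variables, the one--dimensional fundamental theorem of calculus, and the Fubini/Tonelli bookkeeping for the iterated integral --- are routine.
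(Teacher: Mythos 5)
The paper does not prove this proposition; it is quoted verbatim from \cite{cardaliaguet2019master} (Proposition 2.2.3), so there is no in-paper argument to compare against. Your proof is correct and is essentially the standard one from that reference: the linear-derivative identity, the pushforward change of variables, the fundamental theorem of calculus in $y$, the explicit coupling giving $W_2(\nu^s_h,\mu)^2\leq s^2\int|\phi|^2\,d\mu$ uniformly in $h$, and dominated convergence using the bound $\|\partial_y\tfrac{\delta U}{\delta\mu}\|_\infty|\phi|\in L^1(\mu)$ are all exactly the right ingredients, and each step is justified by the stated hypotheses.
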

We know from Theorem 1.20 in \cite{cardaliaguet2010notes} that the Lions derivative and the linear derivative (also called the flat derivative) are connected through the following relation:
\begin{eqnarray*}
\partial_{\mu}U(\mu,y)=\partial_y\frac{\delta U}{\delta\mu}(\mu,y).
\end{eqnarray*}
One can also define the higher order derivatives. 
\begin{definition}
For a fixed $y\in\mathbb{R}^d$, if the map $\mu\rightarrow\frac{\delta U}{\delta\mu}(\mu,y)$ is in $\mathcal{C}^1(\mathcal{P}_2(\mathbb{R}^d))$, then we say that $U$ is of class $\mathcal{C}^2(\mathcal{P}_2(\mathbb{R}^d))$ and denote by $\frac{\delta^2 U}{\delta\mu^2}$ its second order derivative. 
\end{definition}
\begin{definition}
$U$ is said to be twice Lions differentiable if the map $\partial_{\mu}U$ is Lions differentiable w.r.t. $\mu$ with $\partial_{\mu\mu}U:=\partial_{\mu}\partial_{\mu}U(\mu,y,y')$.
\end{definition}
The following proposition is important and will be frequently used in our work.
\begin{proposition}
For $U\in\mathcal{C}^2(\mathcal{P}_2(\mathbb{R}^d))$ of a linear form: $U(\mu)=\langle u,\mu\rangle :=\int_{\mathbb{R}^d} \,u(x)\mu(dx)$ and the derivatives of $u$ have linear growth, then
\begin{eqnarray*}&&
\frac{\delta U}{\delta\mu}(\mu,y)=u(y),\ \partial_{\mu}U(\mu,y)=u'(y),\ \partial_y\partial_{\mu}U(\mu,y)=u''(y).
\end{eqnarray*}
\end{proposition}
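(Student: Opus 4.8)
The plan is to read off the three identities directly from the definitions recalled in this subsection, computing the linear derivative $\frac{\delta U}{\delta\mu}$ first and then obtaining the Lions derivatives by differentiating in the state variable $y$ through the relation $\partial_{\mu}U(\mu,y)=\partial_y\frac{\delta U}{\delta\mu}(\mu,y)$. Observe that for a linear $U$ the hypothesis $U\in\mathcal{C}^2(\mathcal{P}_2(\mathbb{R}^d))$ carries essentially no weight: once we exhibit $\frac{\delta U}{\delta\mu}(\mu,y)=u(y)$, which is independent of $\mu$, its $\mu$-derivative vanishes and $\mathcal{C}^2$-regularity is automatic, so the whole content lies in the computation of $\frac{\delta U}{\delta\mu}$, $\partial_\mu U$ and $\partial_y\partial_\mu U$.

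For the first identity I would propose the candidate $g(\mu,y):=u(y)$ and verify it against Definition~\ref{linearderiv}. Since $U$ is linear, $U(\mu')-U(\mu)=\int_{\mathbb{R}^d}u(y)\,(\mu'-\mu)(dy)$ for all $\mu,\mu'\in\mathcal{P}_2(\mathbb{R}^d)$; because the integrand depends neither on the interpolation parameter $h$ nor on the interpolating measure $(1-h)\mu+h\mu'$, the double integral $\int_0^1\!\int_{\mathbb{R}^d}g((1-h)\mu+h\mu',y)\,(\mu'-\mu)(dy)\,dh$ collapses to exactly this quantity, while joint continuity of $g$ in $(\mu,y)$ is immediate in the absence of any $\mu$-dependence. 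Two minor points must be dispatched on the way: (i) for $U$ to be finite on $\mathcal{P}_2(\mathbb{R}^d)$ one needs $u$ of at most quadratic growth, which follows from the linear growth of its derivatives, so the boundedness clause in Definition~\ref{linearderiv} should be read in the usual polynomial-growth extension of these notions; and (ii) the normalization convention strictly forces the representative $u(y)-\int_{\mathbb{R}^d}u\,d\mu$, but this differs from $u(y)$ only by a constant in $y$ and is therefore immaterial for every subsequent $y$-derivative.

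The remaining identities then drop out by differentiation in $y$: $\partial_\mu U(\mu,y)=\partial_y\frac{\delta U}{\delta\mu}(\mu,y)=\partial_y u(y)=\partial_x u(y)$, and one further $y$-derivative gives $\partial_y\partial_\mu U(\mu,y)=\partial_{xx}u(y)$. As an independent check of the Lions derivative straight from Definition~\ref{Lionsderiv}, I would lift $U$ to $\widetilde{U}(X)=\mathbb{E}[u(X)]$ on $L^2(\Omega;\mathbb{R}^d)$ and Taylor-expand around $X_0$: $\widetilde{U}(X)-\widetilde{U}(X_0)=\mathbb{E}\big[\partial_x u(X_0)\cdot(X-X_0)\big]+\mathbb{E}\big[o(|X-X_0|)\big]$, where the linear growth of $\partial_x u$ ensures $\partial_x u(X_0)\in L^2(\Omega;\mathbb{R}^d)$ so the first term is a bounded linear functional of $X-X_0$, and the linear growth of $\partial_{xx}u$ together with Cauchy--Schwarz and dominated convergence controls the remainder as $o(\|X-X_0\|_{L^2})$; hence $D\widetilde{U}(X_0)=\partial_x u(X_0)$, i.e. the Borel map $h_{\mathbb{P}_{X_0}}$ in $(\ref{lionsderivative})$ is $\partial_x u$, which is precisely $\partial_\mu U(\mu,\cdot)$. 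There is no genuine obstacle here, the statement being in essence a bookkeeping exercise on the definitions; the only steps asking for care are the two caveats above, namely reconciling the linear-growth hypothesis with the boundedness built into the definitions, and, should one prefer the direct Lions route, justifying the interchange of differentiation and expectation in the Taylor step.
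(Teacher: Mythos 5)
The paper itself offers no proof of this proposition: it is quoted as a standard fact (with the surrounding definitions attributed to \cite{cardaliaguet2010notes,carmona2018probabilistic}), so there is nothing to compare your argument against line by line. Your primary route is the natural one and is correct: for linear $U$ the candidate $g(\mu,y)=u(y)$ verifies the defining identity of Definition~\ref{linearderiv} trivially because the integrand is independent of the interpolating measure, and the remaining two identities follow from the relation $\partial_{\mu}U=\partial_y\frac{\delta U}{\delta\mu}$ that the paper has already recorded. Your two caveats are well taken; in particular the observation that the normalization convention really yields $u(y)-\int u\,d\mu$ rather than $u(y)$, with no effect on the $y$-derivatives, is a genuine (if harmless) discrepancy in the paper's statement that is worth flagging.

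One soft spot: in the optional ``independent check'' via the lift, the claim that linear growth of $\partial_{xx}u$ plus Cauchy--Schwarz controls the Taylor remainder as $o(\lVert X-X_0\rVert_{L^2})$ does not quite go through as written. The integral form of the remainder is bounded by $C(1+|X_0|+|X-X_0|)\,|X-X_0|^2$, and Cauchy--Schwarz on $\mathbb{E}\big[|X_0|\,|X-X_0|^2\big]$ produces $\lVert X_0\rVert_{L^2}\,\big(\mathbb{E}|X-X_0|^4\big)^{1/2}$, which is not controlled by the $L^2$ distance alone; the clean Fr\'echet statement on $L^2$ usually requires $\partial_x u$ Lipschitz (equivalently $\partial_{xx}u$ bounded), or a restriction to perturbations with higher moments. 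Since this check is redundant given your main route through the linear derivative, the proof stands, but you should either drop the check or strengthen its hypothesis.
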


\section{It{\^o}-Wentzell-Lions formula for flows of measures on semimartingales}{\label{c1}}

Consider two semimartingales $X$ and $Y$ taking values in  $\mathbb{R}^d$ and $\mathbb{R}^l$ respectively, whose decompositions are as follows,
\begin{equation}\label{decompositionsemimart}\forall t\in[0,T],\ \  X_t=X_0+M_t+V_t,\ \ Y_t=Y_0+N_t+U_t,\end{equation}
where the initial data $X_0,Y_0$ are deterministic, $M,N$ are continuous local martingales and $V,U$ are c{\`a}dl{\`a}g processes of finite variation.
Note that there is no need to assume that $X$ and $Y$ are independent. $X$ is the state  process and $Y$ derives the stochastic function $F$ in the following form 
\begin{eqnarray}{\label{decompositionF}}F(t,x)=F_0(x)+\int_{0}^{t}  \,G_s(x)ds+\int_{0}^{t}  \,H_s(x)dY_s,\quad t\in [0,T],\ x\in \mathbb{R}^d,\end{eqnarray}
where $\int_{0}^{t}H_s(x)dY_s:=\sum_{i=1}^{l}\int_{0}^{t}H_s^{(i)}(x)dY^{(i)}_s$, 
 $F_0:\mathbb{R}^d\to\mathbb{R}$ is deterministic, $G:\Omega\times[0,T]\times \mathbb{R}^d \mapsto \mathbb{R}$ is $\mathbb{F}-$progressively measurable and $H:\Omega\times[0,T]\times \mathbb{R}^d \mapsto \mathbb{R}^{l}$ is $\mathbb{F}-$predictable. 
\begin{definition}
We call the random field $F:\Omega\times[0,T]\times\mathbb{R}^d\rightarrow\mathbb{R}$ defined in (\ref{decompositionF}) RF-$C^2$, if $F_0(\cdot)$ is twice continuously differentiable w.r.t. $x$, so are $G_t(\omega,\cdot)$ and $H_t(\omega,\cdot)$ for any $(\omega,t)\in\Omega\times[0,T]$, and there is a constant $C$ such that 
\begin{equation*}\begin{split}
 \sup_{x\in \mathbb{R}^d}\left\lvert F_0(x)\right\rvert + \sup_{x\in \mathbb{R}^d}\left\lvert\partial_x F_0(x)\right\rvert+\sup_{x\in \mathbb{R}^d}\left\lvert\partial_{xx} F_0(x)\right\rvert &\leq C;\\
 \int_{0}^{T} \, \sup_{x\in \mathbb{R}^d}\left\lvert K_s(x)\right\rvert^2ds +\int_{0}^{T} \, \sup_{x\in \mathbb{R}^d}\left\lvert \partial_x K_s(x)\right\rvert^2ds+\int_{0}^{T} \, \sup_{x\in \mathbb{R}^d}\left\lvert \partial_{xx} K_s(x)\right\rvert^2ds &\leq C,\ \ \ a.s.,
\end{split}\end{equation*}
with $K:=G,H$.
\end{definition}
\begin{definition}\label{def RF-P-C2}
Consider the random field $F:\Omega\times[0,T]\times\mathcal{P}_2(\mathbb{R}^d)\rightarrow\mathbb{R}$ defined  by replacing $x$ with $\mu$ in \eqref{decompositionF}, i.e. 
\begin{eqnarray}\label{decompositionFmu}
F(t,\mu)=F_0(\mu)+\int_{0}^{t}G_s(\mu)ds+\int_{0}^{t}H_s(\mu)dY_s,\quad t\in [0,T],\ \mu\in\mathcal{P}_2(\mathbb{R}^d).\end{eqnarray}
$F$ is called $RF-Partially-\mathcal{C}^2$, if for any $(\omega,t)\in\Omega\times[0,T]$, $F_0(\cdot),G_t(\omega,\cdot),H_t(\omega,\cdot)$ are differentiable in $\mu$, and their derivatives $\partial_\mu F_0(\mu,\cdot),\partial_\mu G_t(\omega,\mu,\cdot),\partial_\mu H_t(\omega,\mu,\cdot)$ are differentiable in $x$ with continuous derivatives (note that, by the Leibniz rule proved in Appendix Lemma \ref{lemma Leibniz}, the Lions differentiability w.r.t. $\mu$ of $F_0,G,H$ implies the Lions differentiability of $F$). Moreover, there is a constant $C$ such that
\begin{align*}
 \sup_{x\in\mathbb{R}^d, \mu\in \mathcal{P}_2(\mathbb{R}^d)}\left\lvert F_0(\mu)\right\rvert +& \sup_{\mu\in \mathcal{P}_2(\mathbb{R}^d),x\in \mathbb{R}^d}\left\lvert\partial_{\mu} F_0(\mu,x)\right\rvert\\
+&\sup_{\mu\in \mathcal{P}_2(\mathbb{R}^d),x\in \mathbb{R}^d}\left\lvert\partial_{x}\partial_{\mu}F_0(\mu,x) \right\rvert  \leq C;\\
\int_{0}^{T}\sup_{\mu\in \mathcal{P}_2(\mathbb{R}^d)}\left\lvert K_s(\mu)\right\rvert^2ds +&\int_{0}^{T} \, \sup_{\mu\in \mathcal{P}_2(\mathbb{R}^d),x\in \mathbb{R}^d}\left\lvert \partial_{\mu}K_s(\mu,x)\right\rvert^2ds\\
+&\int_{0}^{T} \, \sup_{\mu\in \mathcal{P}_2(\mathbb{R}^d),x\in \mathbb{R}^d}\left\lvert \partial_x\partial_{\mu}K_s(\mu,x)\right\rvert^2ds  \leq C,\ \ \ a.s.,
\end{align*}
with $K:=G,H$. 
\end{definition}
\begin{remark}
The condition of boundedness in Definition \ref{def RF-P-C2} can be relaxed to that of polynomial growth w.r.t. variable $x$, i.e.
\begin{align*}
\sup_{\mu\in \mathcal{P}_2(\mathbb{R}^d),x\in \mathbb{R}^d}\left\lvert\partial_{\mu}F_0(\mu,x) \right\rvert+\sup_{\mu\in \mathcal{P}_2(\mathbb{R}^d),x\in \mathbb{R}^d}\left\lvert\partial_x\partial_{\mu}F_0(\mu,x) \right\rvert  \leq C(1+|x|^\alpha)
\end{align*}
for some $\alpha>0$, and the same for $G,H$. Then the estimation in our localization argument (see Step 2 in the proof of Theorem \ref{itowentzellc1thm}) should be replaced with that in Section 4.2 of \cite{guo2024ito}. In this paper, we assume the boundedness only for computational simplicity.
\end{remark}

We define the following assumptions.
\begin{assumption}\label{assumptionX}
For semimartingale $X$ given by \eqref{decompositionsemimart}, it holds that
\begin{eqnarray*}
\mathbb{E}\left[ Var(V)_T \right]<\infty,\ \mathbb{E}\bigg[ \sum_{0<t\leq T}\left\lvert \Delta V_t\right\rvert  \bigg]<\infty,\ \mathbb{E}\big[ [M,M]_T \big]<\infty.
\end{eqnarray*}
Here for process $\{P_t\}_{t\in[0,T]}$, $Var(P)_T$ denotes the variation of $P$ on $[0,T]$, $P_{t-}$ the left limit of $P$ at $t$, 
$\Delta P_t:=P_t-P_{t-}$ the jump of $P$ at $t$, $[P,P]_\cdot$ the quadratic variation of $P$.\\ 
The semimartingale $Y$ in \eqref{decompositionsemimart} satisfies the same conditions as $X$. Furthermore, the c{\`a}dl{\`a}g process $U$ has no jump at time $T$.
\end{assumption}

\begin{remark}
Under Assumption 1, any $RF-Partially-\mathcal{C}^2$ function $F$ satisfies the following boundedness
\begin{align*}
\sup_{t\in [0,T],\mu\in \mathcal{P}_2(\mathbb{R}^d)}\mathbb{E}\left\lvert F(t,\mu)\right\rvert^2 +& \sup_{t\in [0,T],\mu\in \mathcal{P}_2(\mathbb{R}^d),x\in \mathbb{R}^d}\mathbb{E}\left\lvert\partial_{\mu} F(t,\mu,x)\right\rvert^2\\
+&\sup_{t\in [0,T],\mu\in \mathcal{P}_2(\mathbb{R}^d),x\in \mathbb{R}^d}\mathbb{E}\left\lvert\partial_{x}\partial_{\mu}F(t,\mu,x) \right\rvert^2 < \infty.
\end{align*}
In fact, it follows \eqref{decompositionFmu} that
\begin{align*}
\mathbb{E}|F(t,\mu)|^2\leq 3\mathbb{E}\bigg[|F_0(\mu)|^2+\Big|\int_0^t\, G_s(\mu)ds\Big|^2+\Big|\int_0^t\, H_s(\mu)dY_s\Big|^2\bigg].
\end{align*}
Then by Cauchy-Schwarz inequality, we have 
\begin{align*}
\bigg|\int_0^t\, G_s(\mu)ds\bigg|^2\leq \bigg(\int_0^t\,|G_s(\mu)|^2ds\bigg)t\leq CT,\ \ \ a.s.
\end{align*}
Note the expression of $Y$, we can deduce
\begin{align*}
\mathbb{E}\bigg[\Big|\int_0^t\, H_s(\mu)dY_s\Big|^2\bigg]\leq 2\mathbb{E}\bigg[\Big|\int_0^t\, H_s(\mu)dU_s\Big|^2\bigg]+2\mathbb{E}\bigg[\Big|\int_0^t\, H_s(\mu)dN_s\Big|^2\bigg].
\end{align*}
Still by Cauchy-Schwarz inequality, it holds
\begin{align*}
\mathbb{E}\bigg[\Big|\int_0^t\, H_s(\mu)dU_s\Big|^2\bigg]\leq \mathbb{E}\bigg[\Big(\int_0^t\,|H_s(\mu)|^2dU_s\Big)Var(U)_t\bigg]\leq C,
\end{align*}
where the last inequality is obtained by Assumption \ref{assumptionX} for $U$ and the $L^2$-integrability of $H$. \\Finally, by It{\^o} isometry, we know that
\begin{align*}
\mathbb{E}\bigg[\Big|\int_0^tH_s(\mu)dN_s\Big|^2\bigg]=\mathbb{E}\bigg[\int_0^t|H_s(\mu)|^2d[N,N]_s\bigg],
\end{align*}
which is also bounded. The results on the derivatives of $F$ can be obtained similarly.
\end{remark}

To construct the It{\^o}-Wentzell-Lions formula for cylindrical functions, we need the following standard It{\^o} formula for general semimartingales.
\begin{theorem}\label{standitosemimart}
    For $g\in C^2(\mathbb{R}^d)$ and $X$ satisfying Assumption 1, it holds almost surely that, for any $0\leq s\leq t\leq T$,
    \begin{align*}
    g(X_t)-g(X_s)=&\int_{s}^{t}  \,\partial_x g(X_{r-})dX_r
        +\int_{s}^{t}  \,\partial_{xx}g(X_{r-}):d[X,X]_r^c\\
        &+\sum_{s<r\leq t}\big\{g(X_r)-g(X_{r-})-\partial_x g(X_{r-})\Delta X_r\big\},\end{align*}
where $[X,X]_\cdot^c$ denotes the quadratic variation of the continuous part of $X$.
\end{theorem}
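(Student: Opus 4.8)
The plan is to reduce the statement to the classical It\^o formula for general (c\`adl\`ag) semimartingales, which for a $C^2$ function of a single real-valued semimartingale is standard (see, e.g., Protter, \emph{Stochastic Integration and Differential Equations}, Theorem II.32, or Jacod--Shiryaev). The only work is to check that Assumption \ref{assumptionX} puts us in the setting where that general formula applies and that the integrals appearing on the right-hand side are all well-defined. First I would recall that $X_t = X_0 + M_t + V_t$ is a c\`adl\`ag semimartingale, since $M$ is a continuous local martingale and $V$ is c\`adl\`ag of finite variation; in particular the stochastic integral $\int_s^t \partial_x g(X_{r-})\,dX_r$ makes sense because $r\mapsto \partial_x g(X_{r-})$ is left-continuous and adapted, hence locally bounded and predictable, and $X$ is a semimartingale. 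The term $\int_s^t \partial_{xx} g(X_{r-}) : d[X,X]^c_r$ is a Lebesgue--Stieltjes integral against the continuous finite-variation process $[X,X]^c = [M,M]^c$ (note $[V,V]^c$ contributes nothing since $V$ has finite variation, and the cross bracket $[M,V]$ vanishes), which is integrable by the hypothesis $\mathbb{E}[[M,M]^c_T] < \infty$ together with the boundedness of $\partial_{xx} g$ on the (random but a.s.\ finite) range of $X$ on $[s,t]$ — or, more carefully, by a localization along stopping times $T_n = \inf\{r : |X_r| \ge n\}$ so that one may assume $X$ bounded.

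The core step is to invoke the general It\^o formula: for a c\`adl\`ag semimartingale $X$ and $g \in C^2$,
\begin{align*}
g(X_t) - g(X_s) = \int_s^t \partial_x g(X_{r-})\,dX_r + \tfrac12\int_s^t \partial_{xx} g(X_{r-})\,d[X,X]^c_r + \sum_{s < r \le t}\bigl\{ g(X_r) - g(X_{r-}) - \partial_x g(X_{r-})\,\Delta X_r \bigr\}.
\end{align*}
Here I would note the matching of conventions: the paper writes the continuous second-order term as $\partial_{xx} g(X_{r-}) : d[X_r,X_r]^c$, which absorbs the factor $\tfrac12$ into the $:$ pairing convention (or, in the scalar-per-coordinate reading, is understood with the usual normalization); in any case $[X,X]^c = [M,M]^c$ since $X - M = X_0 + V$ has paths of finite variation. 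The jump sum converges absolutely a.s.: by Taylor's theorem with the $C^2$ bound, $|g(X_r) - g(X_{r-}) - \partial_x g(X_{r-})\Delta X_r| \le \tfrac12 \sup|\partial_{xx} g| \cdot |\Delta X_r|^2$, and $\sum_{s<r\le t}|\Delta X_r|^2 \le [X,X]_t < \infty$ a.s.; moreover $\Delta X_r = \Delta V_r$ (since $M$ is continuous), and under Assumption \ref{assumptionX} these jumps are summable in the appropriate sense.

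The main obstacle — really the only point requiring care rather than citation — is the interplay between the \emph{global} boundedness of $g, \partial_x g, \partial_{xx} g$ that is \emph{not} assumed (only $g \in C^2(\mathbb{R}^d)$, so its derivatives are merely continuous, hence bounded only on compacts) and the integrability hypotheses, which are what guarantee the identity holds as a genuine (rather than merely formal) equality with a.s.-finite terms. I would handle this by the standard localization: let $\tau_n = \inf\{r \ge s : |X_r| \vee |X_{r-}| > n\} \wedge t$, apply the general It\^o formula (which holds pathwise/up to localization without any growth assumption on $g$) on $[s, \tau_n]$ where $X$ stays in the ball of radius $n+\sup_{s<r\le t}|\Delta X_r|$ so that $g$ and its first two derivatives are bounded there, then let $n \to \infty$ using $\tau_n \uparrow t$ a.s.\ (because $X$ has c\`adl\`ag, hence locally bounded, paths) and dominated convergence for the finite-variation and bracket integrals, plus the a.s.\ convergence of the (absolutely convergent) jump series. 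Since the paper only claims the formula \emph{almost surely} for each fixed $0 \le s \le t \le T$, this pathwise-plus-localization argument suffices and no $L^2$ control of $g(X)$ itself is needed; Assumption \ref{assumptionX} enters only to ensure the right-hand side is a.s.\ finite and, later in the paper, to take expectations.
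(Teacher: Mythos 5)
Your proof is correct, but it takes a different route from the paper. You invoke the general It\^o formula for c\`adl\`ag semimartingales (Protter II.32 / Jacod--Shiryaev) directly and then spend your effort on localization to cover the fact that $g\in C^2(\mathbb{R}^d)$ is not assumed bounded; the paper instead performs a reduction to the \emph{continuous} It\^o formula, applying it to the jump-corrected process $g(X_t)-\sum_{0<r\leq t}\{g(X_r)-g(X_{r-})\}$, which is continuous because $M$ is a continuous local martingale and all jumps of $X$ come from $V$. The paper's device has the advantage of being reusable verbatim in the next theorem (the It\^o--Wentzell case, where only Kunita's continuous-field formula is cited), whereas your route is more economical here since the claimed identity \emph{is} the textbook general It\^o formula --- indeed your localization is slightly more than is needed, as that theorem already holds a.s.\ for arbitrary $C^2$ functions, the jump series converging absolutely by the Taylor bound against $\sum_{s<r\le t}|\Delta X_r|^2\le [X,X]_t<\infty$, exactly as you note. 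You are also right to flag the normalization of the second-order term: the displayed statement omits the explicit $\tfrac12$ that appears in the analogous term of Theorem 2, so it must be read as absorbed into the pairing convention; your proof correctly supplies the standard $\tfrac12\int_s^t\partial_{xx}g(X_{r-})\,d[X,X]^c_r$ and observes $[X,X]^c=[M,M]^c$. No gaps.
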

\begin{proof}
Applying the classical It{\^o} formula to the continuous process $g(X_t)-\sum_{0<r\leq t}\{g(X_r)-g(X_{r-})\}$, the desired result can be easily obtained.
\end{proof}

We then construct the It{\^o}-Wentzell formula for general semimartingales.
\begin{theorem}\label{standitowentzellsemimart}
    Given semimartingales $X$ and $Y$ satisfying Assumption 1 and a random field $F:\Omega\times[0,T]\times\mathbb{R}^d\rightarrow\mathbb{R}$ which is $RF-C^2$, then, for any $0\leq s\leq t\leq T$, the following expansion holds $\mathbb{P}-$almost surely,
    \begin{equation}\label{itowentzelljump}\begin{split}
    F(t,X_t)&-F(s,X_s)=\int_{s}^{t}  \,G_r(X_{r-})dr+\int_{s}^{t}  \,H_r(X_{r-})dY_r+\int_{s}^{t}  \,\partial_x F(r-,X_{r-})dX_r\\
    &+\frac{1}{2}\int_{s}^{t}  \,\partial_{xx}F(r-,X_{r-}) :d[X,X]_r^c+\int_{s}^{t}  \,\partial_x H_r(X_{r-}):d[X,Y]_r^c\\
    &+\sum_{s<r\leq t}\big\{F(r,X_r)-F(r-,X_{r-})-\partial_x F(r-,X_{r-})\Delta X_r-H_r(X_{r-})\Delta Y_r\big\},\end{split}\end{equation}
where 
\begin{align*}
\int_{s}^{t}  \,\partial_x H_r(X_{r-}):d[X,Y]_r:=\sum_{j=1}^l\sum_{i=1}^d\int_{s}^{t}  \,\partial_{x_i} H_r^{(j)}(X_{r-})d[X^{(i)},Y^{(j)}]_r.
\end{align*}
\end{theorem}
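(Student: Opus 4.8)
The plan is to deduce \eqref{itowentzelljump} from the classical It{\^o}-Wentzell formula for \emph{continuous} semimartingales (see e.g.\ Kunita \cite{kunita1990stochastic}) by an interlacing argument along the jumps of $X$ and $Y$, combined with an approximation removing the small jumps. Theorem \ref{standitosemimart} by itself will not do, since it only permits deterministic integrands, whereas here the random field $F$ evolves through the stochastic integral $\int H\,dY$; and the general-semimartingale It{\^o}-Wentzell formula in \cite{kunita1981some} could in principle be invoked directly after matching conventions.

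\emph{Step 1 (consistency of jumps).} First I would check that all terms in \eqref{itowentzelljump} are well defined --- in particular that the final series converges absolutely a.s., which follows from the $RF$-$C^2$ bounds and Assumption \ref{assumptionX} once the $r$-th summand is expanded as $[H_r(X_r)-H_r(X_{r-})]\Delta Y_r+\tfrac12\partial_{xx}F(r-,\xi_r):(\Delta X_r)^{\otimes 2}$ (using $F(r,x)-F(r-,x)=H_r(x)\Delta Y_r$ for the time jump and a second-order Taylor expansion in space) --- and that the right-hand side has the same jumps as $t\mapsto F(t,X_t)$: the stochastic integrals against $dY$ and $dX$ jump by $H_r(X_{r-})\Delta Y_r$ and $\partial_x F(r-,X_{r-})\Delta X_r$, the $dr$-, $d[X,X]^c$- and $d[X,Y]^c$-integrals are continuous, and the series supplies the remainder, so the total jump at $r$ is $F(r,X_r)-F(r-,X_{r-})$.

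\emph{Step 2 (finitely many jumps).} Assume next that $V$ and $U$ jump only at finitely many times $s<\tau_1<\dots<\tau_m\le t$. On each open interval between consecutive jump times, and on the two end pieces, the processes $X$, $Y$ and the map $r\mapsto F(r,\cdot)$ are continuous, so the classical continuous It{\^o}-Wentzell formula applies there. Summing these identities over the pieces, the left-hand sides telescope into $F(t,X_t)-F(s,X_s)-\sum_i\{F(\tau_i,X_{\tau_i})-F(\tau_i-,X_{\tau_i-})\}$, while the Stieltjes integrals against $dr$, $d[X,X]^c$, $d[X,Y]^c$ and the continuous-part integrals $\int\partial_x F\,dX^c$, $\int H\,dY^c$ are unchanged by removing the finite set $\{\tau_i\}$; re-inserting the jumps $\partial_x F(\tau_i-,X_{\tau_i-})\Delta X_{\tau_i}$ and $H_{\tau_i}(X_{\tau_i-})\Delta Y_{\tau_i}$ to recover $\int\partial_x F\,dX$ and $\int H\,dY$ and collecting the leftover jump contributions reproduces exactly \eqref{itowentzelljump}.

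\emph{Step 3 (approximation, and the main obstacle).} In general I would enumerate the at most countably many jump times of $V$ and $U$, let $J_n$ consist of the $n$ of them with largest $|\Delta X_\tau|+|\Delta Y_\tau|$, form $X^{(n)},Y^{(n)}$ by deleting the jumps outside $J_n$, and set $F^{(n)}(t,x):=F_0(x)+\int_0^t G_s(x)\,ds+\int_0^t H_s(x)\,dY^{(n)}_s$. Since $H,\partial_x H,\partial_{xx}H$ are bounded and $\sum_{\tau\notin J_n}(|\Delta X_\tau|+|\Delta Y_\tau|)\to 0$ a.s., one obtains $X^{(n)}\to X$, $Y^{(n)}\to Y$ uniformly on $[0,T]$ a.s., $(F^{(n)},\partial_x F^{(n)},\partial_{xx}F^{(n)})\to(F,\partial_x F,\partial_{xx}F)$ uniformly in $(t,x)$ a.s., and $[X^{(n)},X^{(n)}]^c=[X,X]^c$, $[X^{(n)},Y^{(n)}]^c=[X,Y]^c$. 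Applying Step 2 to $(X^{(n)},Y^{(n)},F^{(n)})$ and letting $n\to\infty$, one passes to the limit term by term: dominated convergence for the $dr$-, $d[X,X]^c$-, $d[X,Y]^c$-integrals and the jump series, and $L^2$-stability of stochastic integrals --- It{\^o}'s isometry and Burkholder--Davis--Gundy for the local-martingale parts, bounded convergence in total variation for the finite-variation parts --- for $\int H_r(X^{(n)}_{r-})\,dY^{(n)}_r$ and $\int\partial_x F^{(n)}(r-,X^{(n)}_{r-})\,dX^{(n)}_r$. This last limiting step is the delicate one: the integrands $\partial_x F^{(n)}$ (and $\partial_{xx}F^{(n)}$ in the bracket integral) are a priori only locally, not uniformly, bounded in $\omega$, so one leans on the $L^2$-moment conditions of Assumption \ref{assumptionX}, and if preferred one first localizes by stopping times to reduce to the bounded case, in the spirit of the localization used elsewhere in the paper. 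By comparison, the bookkeeping of jumps in Steps 1 and 2 is routine.
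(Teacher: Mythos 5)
Your plan is correct in substance but follows a genuinely different route from the paper. The paper's proof is essentially a two-line reduction: it subtracts the whole jump part $\sum_{0<r\le t}\{F(r,X_r)-F(r-,X_{r-})\}$ from $F(t,X_t)$, invokes the generalized It\^o--Wentzell formula of Kunita (Theorem 1.1 in \cite{kunita1981some}) to identify the continuous part as the right-hand side of \eqref{continuouspartitowentzell}, and then converts the $dX^c_r$ and $dY^c_r$ integrals back to $dX_r$ and $dY_r$ integrals at the cost of the compensating jump sums. You instead rebuild the formula from the \emph{continuous} It\^o--Wentzell formula by interlacing along the jump times of $V$ and $U$ (your Step 2, whose telescoping and jump bookkeeping are correct, and whose reduction of the residual summand to $[H_r(X_r)-H_r(X_{r-})]\Delta Y_r+\tfrac12\partial_{xx}F(r-,\xi_r):(\Delta X_r)^{\otimes 2}$ is the right way to see summability), followed by a small-jump truncation (Step 3). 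What your approach buys is self-containedness --- you never need the discontinuous version of Kunita's theorem --- at the price of a substantially heavier limiting argument; what the paper's approach buys is brevity, at the price of leaning on the cited general result.

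One caveat you should make explicit in Step 3: the uniform convergences $F^{(n)}\to F$, $\partial_xF^{(n)}\to\partial_xF$, $\partial_{xx}F^{(n)}\to\partial_{xx}F$ rest on tail sums of the form $\sum_{\tau\notin J_n}\sup_x|\partial_x^kH_\tau(x)|\,|\Delta U_\tau|$ tending to zero, hence on $\int_0^T\sup_x|\partial_x^kH_s(x)|\,|dU_s|<\infty$ a.s. The $RF$-$C^2$ definition only provides $\int_0^T\sup_x|\partial_x^kH_s(x)|^2\,ds\le C$, i.e.\ integrability against Lebesgue measure, not against the variation measure of $U$. This extra integrability is implicitly needed anyway for $\int_0^tH_s(x)\,dU_s$ and for the jump series in \eqref{itowentzelljump} to be well defined (the paper assumes it tacitly), so it is not a defect specific to your argument, but your proof should state it as a hypothesis rather than derive it from the $L^2(ds)$ bounds.
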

\begin{proof}
Apply the standard It{\^o}-Wentzell formula (see Theorem 1.1 in \cite{kunita1981some}) to the following continuous random field
\begin{align*}
F(t,X_t)-\sum_{0<r\leq t}\{F(r,X_r)-F(r-,X_{r-})\},
\end{align*}
and obtain
\begin{equation}\label{continuouspartitowentzell}\begin{split}
F(t,X_t)&-F(s,X_s)-\sum_{s<r\leq t}\{F(r,X_r)-F(r-,X_{r-})\}\\
=&\int_{s}^{t}  \,G_r(X_{r-})dr+\int_{s}^{t}  \,H_r(X_{r-})dY^c_r+\int_{s}^{t}  \,\partial_x F(r-,X_{r-})dX^c_r\\
&+\frac{1}{2}\int_{s}^{t}  \,\partial_{xx}F(r-,X_{r-}) :d[X,X]_r^c+\int_{s}^{t}  \,\partial_x H_r(X_{r-}):d[X,Y]_r^c.
\end{split}\end{equation}
Then the following relations 
\begin{align*}
\int_{s}^{t}  \,H_r(X_{r-})dY^c_r=\int_{s}^{t}  \,H_r(X_{r-})dY_r-\sum_{s<r\leq t}\{H_r(X_{r-})\Delta Y_r\}
\end{align*}
and 
\begin{align*}
\int_{s}^{t}  \,\partial_x F(r-,X_{r-})dX^c_r=\int_{s}^{t}  \,\partial_x F(r-,X_{r-})dX_r+\sum_{s<r\leq t}\{\partial_x F(r-,X_{r-})\Delta X_r\},
\end{align*}
together with \eqref{continuouspartitowentzell} yield the desired formula.
\end{proof}
\begin{remark}
From the proof of Theorem \ref{standitowentzellsemimart}, we know that formula \eqref{itowentzelljump} can be written as 
\begin{align*}
F(t,X_t)-F(s,X_s)=&\int_{s}^{t}  \,G_r(X_{r-})dr+\int_{s}^{t}  \,H_r(X_{r-})dY_r^c+\int_{s}^{t}  \,\partial_x F(r-,X_{r-})dX_r^c\nonumber\\
&+\frac{1}{2}\int_{s}^{t}  \,\partial_{xx}F(r-,X_{r-}) :d[X,X]_r^c+\int_{s}^{t}  \,\partial_x H_r(X_{r-}):d[X,Y]_r^c\nonumber\\
&+\sum_{s<r\leq t}\{F(r,X_r)-F(r-,X_{r-})\}, \ \ \ a.s.
\end{align*}
\end{remark}
Now, we are ready to construct It{\^o}-Wentzell-Lions formula for a stochastic function defined in \eqref{decompositionFmu}. 
The main idea is that, we first prove this formula for the cylindrical functions, and then extend it to the $RF-Partially-\mathcal{C}^2$ functions by approximation method.
\begin{definition}\label{cylindricalfun}
    $F:\mathcal{P}_2(\mathbb{R}^d)\to \mathbb{R}$ is called a $\cC^2-$cylindrical function if for some $n\in \mathbb{N}$,
    \begin{eqnarray*}
    F(\mu)=f(\langle \mu,g^{(1)}\rangle,\dots \langle \mu,g^{(n)}\rangle ),
    \end{eqnarray*}
     with $f\in C^2(\mathbb{R}^n)$, $g^{(i)}\in C^2_b(\mathbb{R}^d)$, $i=1,\cdots,n$ and $\langle \mu,g\rangle:=\int_{\mathbb{R}^d} \,g(x)\mu(dx)$. 
\end{definition}
From Section 4 of \cite{cox2024controlled}, we know that the cylindrical functions are dense in $\mathcal{C}^1(\mathcal{P}_2(\mathbb{R}^d))$.
\begin{proposition}\label{cylindricalapproximationprop}
    Let $F$ be in $\mathcal{C}^1(\mathcal{P}_2(\mathbb{R}^d))$, assume that $\partial_{\mu}F(\mu,x)$ is differentiable w.r.t $x$ and all the derivatives of $F$ are bounded. Then there exists a sequence of $\cC^2-$cylindrical functions $\{f^n\}_{n\in \mathbb{N}^+}$ such that the following convergence holds point wisely, 
    \begin{eqnarray*}
    (f^n,\frac{\delta f^n}{\delta \mu},\partial_{\mu}f^n,\partial_x\partial_{\mu}f^n)\to (F,\frac{\delta F}{\delta \mu},\partial_{\mu}F,\partial_x\partial_{\mu}F).
    \end{eqnarray*}
    Moreover, the functions $f^n$ satisfy  
    \begin{eqnarray*}
    \left\lvert f^n\right\rvert +\left\lvert \partial_{\mu}f^n\right\rvert +\left\lvert \partial_x\partial_{\mu}f^n \right\rvert \leq C.
    \end{eqnarray*}
\end{proposition}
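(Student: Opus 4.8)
\noindent\emph{Sketch of a proof.} The plan is in two stages: first reduce the measure argument of $F$ to finitely many linear statistics by a grid projection, then smooth the resulting finite-dimensional function by Bernstein polynomials so as to land exactly in the class of Definition~\ref{cylindricalfun}. For the first stage, fix a lattice $\{x_p\}=\delta_n\mathbb{Z}^d$ (truncated to a growing box, with the far mass collected at one remote atom) and a rescaled smooth B-spline partition of unity $\{\zeta_p\}\subset C_b^2(\mathbb{R}^d)$: $\zeta_p\ge0$, $\sum_p\zeta_p\equiv1$, each $\zeta_p$ supported near $x_p$, and the family reproducing affine functions, $\sum_p\ell(x_p)\zeta_p\equiv\ell$. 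Put $\beta_n(\mu):=\sum_p\langle\mu,\zeta_p\rangle\delta_{x_p}\in\mathcal{P}_2(\mathbb{R}^d)$; then $\beta_n(\mu)\to\mu$ in $W_2$ for every $\mu$ (finiteness of the second moment controlling the tails), and $F^n(\mu):=F(\beta_n(\mu))=\Phi^n(\langle\mu,\zeta_1\rangle,\dots,\langle\mu,\zeta_{P_n}\rangle)$ with $\Phi^n(a):=F(\sum_p a_p\delta_{x_p})$. Since $a\mapsto\sum_p a_p\delta_{x_p}$ is affine, the chain rule for the flat derivative (Definition~\ref{linearderiv}) shows $\Phi^n\in C^1(\Delta_{P_n})$ with $\partial_p\Phi^n(a)=\frac{\delta F}{\delta\mu}(\sum_q a_q\delta_{x_q},x_p)$, whence $\|\Phi^n\|_{C^1(\Delta_{P_n})}\le\|F\|_\infty+\|\frac{\delta F}{\delta\mu}\|_\infty$ uniformly in $n$. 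But $\Phi^n$ is only $C^1$, so $F^n$ is not yet a $\mathcal{C}^2$-cylindrical function, which is why a second smoothing is needed.

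For the second stage, let $f^n(\mu):=B_{m_n}[\Phi^n](\langle\mu,\zeta_1\rangle,\dots,\langle\mu,\zeta_{P_n}\rangle)$, where $B_{m_n}$ is the Bernstein operator on $\Delta_{P_n}$; being a polynomial, $B_{m_n}[\Phi^n]\in C^\infty(\mathbb{R}^{P_n})$, so $f^n$ is a $\mathcal{C}^2$-cylindrical function with generators $\zeta_1,\dots,\zeta_{P_n}\in C_b^2$. Computing the flat and Lions derivatives of the cylindrical $f^n$ by the chain rule gives $\frac{\delta f^n}{\delta\mu}(\mu,y)=\sum_p(\partial_p B_{m_n}[\Phi^n])(\cdot)\zeta_p(y)$, $\partial_\mu f^n(\mu,y)=\sum_p(\partial_p B_{m_n}[\Phi^n])(\cdot)\partial_y\zeta_p(y)$ and $\partial_x\partial_\mu f^n(\mu,y)=\sum_p(\partial_p B_{m_n}[\Phi^n])(\cdot)\partial_{yy}\zeta_p(y)$, all evaluated at $(\langle\mu,\zeta_q\rangle)_q$ (which does not depend on $y$). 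Using (a) $\|B_{m_n}[\Phi^n]-\Phi^n\|_{C^1(\Delta_{P_n})}\to0$ as $m_n\to\infty$ (Bernstein operators preserve $C^1$-convergence), (b) $\partial_p\Phi^n(a)=\frac{\delta F}{\delta\mu}(\sum_q a_q\delta_{x_q},x_p)$, (c) the moment identities of the affine-reproducing partition of unity obtained by differentiating $\sum_p\zeta_p\equiv1$ and $\sum_p(x_p-y)\zeta_p(y)\equiv0$, (d) a second-order Taylor expansion of $y'\mapsto\frac{\delta F}{\delta\mu}(\beta_n(\mu),y')$ about $y$, and (e) joint continuity of $\frac{\delta F}{\delta\mu},\partial_\mu F,\partial_x\partial_\mu F$ together with $\beta_n(\mu)\to\mu$, one checks, after a diagonal choice of $m_n$ growing fast enough in $\delta_n^{-1}$, that $f^n\to F$, $\frac{\delta f^n}{\delta\mu}\to\frac{\delta F}{\delta\mu}$, $\partial_\mu f^n\to\partial_\mu F$ and $\partial_x\partial_\mu f^n\to\partial_x\partial_\mu F$ pointwise (the normalization constant $\int\frac{\delta f^n}{\delta\mu}(\mu,y)\mu(dy)$ also tends to $0$). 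For the uniform bound, $|f^n|\le\|B_{m_n}[\Phi^n]\|_{L^\infty(\Delta_{P_n})}\le\|\Phi^n\|_{L^\infty(\Delta_{P_n})}\le\|F\|_\infty$ because the Bernstein operator is positive and reproduces constants, and $|\partial_\mu f^n|,|\partial_x\partial_\mu f^n|$ stay $\le C(\|\partial_\mu F\|_\infty+\|\partial_x\partial_\mu F\|_\infty)$ because the Bernstein operator does not amplify tangential directional derivatives and the cancellations in (c) absorb the $\delta_n^{-1}$, resp.\ $\delta_n^{-2}$, weights produced by $\partial_y\zeta_p$, resp.\ $\partial_{yy}\zeta_p$.

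The main obstacle is exactly this last point. Since $\partial_\mu F$ enters $\partial_\mu f^n$ against weights of size $\delta_n^{-1}$ and $\partial_x\partial_\mu F$ enters $\partial_x\partial_\mu f^n$ against weights of size $\delta_n^{-2}$, both the Bernstein error from (a) and the Taylor remainders from (d) must be pushed below $o(\delta_n)$, resp.\ $o(\delta_n^2)$; and because only two $x$-derivatives of $\frac{\delta F}{\delta\mu}$ are assumed bounded, the second-order remainder is only $o(|x_p-y|^2)$ (a modulus-of-continuity estimate, not an $O(|x_p-y|^3)$ bound), which is what dictates how fast $m_n$ must grow relative to $\delta_n$ and why the exact affine reproduction of $\{\zeta_p\}$ is essential rather than merely convenient. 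Once this is arranged, verifying $\beta_n(\mu)\to\mu$ in $W_2$ for all $\mu\in\mathcal{P}_2$ and confirming the chain-rule formulae for $f^n$ are routine. (Alternatively one may simply cite Section~4 of \cite{cox2024controlled}, where a construction of this kind is carried out.)
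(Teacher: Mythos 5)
Your construction is essentially the same as the paper's (which in turn defers to Section~4 of \cite{cox2024controlled}): project $\mu$ onto finitely many linear statistics through a quasi-interpolation adjoint ($T_n^*\mu=\beta_n(\mu)=\sum_p\langle\mu,\zeta_p\rangle\delta_{x_p}$ is exactly the paper's $T_n^*$, and $\varphi\mapsto\sum_p\varphi(x_p)\zeta_p$ its $T_n$), set $f^n=F\circ T_n^*$, and obtain the uniform bounds from positivity and sup-norm contraction of the reconstruction operator. Two execution-level differences are worth recording. First, your Bernstein smoothing of the outer function $\Phi^n$ is not a cosmetic extra: under the hypotheses of Proposition~\ref{cylindricalapproximationprop}, $a\mapsto F(\sum_p a_p\delta_{x_p})$ is only $C^1$ (a second $a$-derivative would require $\frac{\delta^2F}{\delta\mu^2}$, which is not assumed), whereas Definition~\ref{cylindricalfun} and the application in Step~1 of Theorem~\ref{itowentzellc1thm} (the $I_2$ term) demand a $C^2$ outer function; so your second stage patches a point the paper's one-line construction $f^n(\mu):=F(T_n^*\mu)$ leaves implicit. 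Second, the paper's derivative bounds rest on the stated commutation $T_n\partial_x\varphi=\partial_xT_n\varphi$, which your quasi-interpolation operator does not satisfy exactly; you compensate with the affine-reproduction/moment cancellations to absorb the $\delta_n^{-1}$ and $\delta_n^{-2}$ weights, and your accounting of why the Taylor remainders must beat those weights (and hence how $m_n$ must grow relative to $\delta_n$) correctly identifies the only delicate step. Modulo the usual care at the truncation boundary where affine reproduction degrades, the sketch is sound and, on the $C^2$ issue, more complete than the paper's.
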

The proof of the second part of Proposition \ref{cylindricalapproximationprop} is similar to that of Theorem 4.4 in \cite{cox2024controlled}, so we just give the main steps. Construct an operator $T_n:C_b(\mathbb{R}^d)\to C_b(\mathbb{R}^d)$ and its adjoint $T_n^*:\mathcal{P}_2(\mathbb{R}^d)\to\mathcal{P}_2(\mathbb{R}^d)$, such that $T^*_n\mu\to\mu$ and $T_n\varphi_n\to\varphi$, when $\varphi_n\to\varphi$. Define $f^n(\mu):=F(T^*_n\mu)$, for $F\in \mathcal{C}^1(\mathcal{P}_2(\mathbb{R}^d))$, the following identities hold true,
\begin{align*}
\frac{\delta f^n}{\delta\mu}(\mu,x)=T_n\bigg(\frac{\delta F}{\delta\mu}(T^*_n\mu,\cdot)\bigg)(x),\ \ \ T_n\partial_x\varphi(x)=\partial_xT_n\varphi(x).
\end{align*}
Then the boundedness follows from the fact
\begin{align*}
|T_n\varphi(x)|\leq \sup_{y\in\mathbb{R}^d}|\varphi(y)|,
\end{align*}
that is
\begin{align*}
\bigg|\frac{\delta f^n}{\delta\mu}(\mu,x)\bigg|\leq \sup_{y\in\mathbb{R}^d}\bigg|\frac{\delta F}{\delta\mu}(T^*_n\mu,y)\bigg|\leq C.
\end{align*}
To ensure the approximation of random field by deterministic functions, we need a further assumption on $F$ in \eqref{decompositionFmu}.
\begin{assumption}
In \eqref{decompositionFmu}, $G$ and $H$ are adapted to the filtration generated by  $N$ and $U$, so is $F$. In other words, define $\hat{\mathscr{F}}_t:=\sigma\{N_s,\ U_s|0\leq s\leq
 t\}$, $F,\ G$ and $H$ are $\{\hat{\mathscr{F}}_t\}_{t\in[0,T]}$-adapted.
\end{assumption}
We come to state our first main theorem.
\begin{theorem}\label{itowentzellc1thm}
    Given semimartingales $X$ and $Y$ satisfying Assumption 1, and $F$ defined in \eqref{decompositionFmu} which is $RF-Partially-\cC^2$ satisfying Assumption 2. Set $\mu_t:=\mathbb{P}_{X_t}$, $\forall t\in[0,T]$. Then we have for any $0\leq s\leq t \leq T$,
    \begin{align}\label{itowentzellc1}
    F(t,\mu_t)-&F(s,\mu_s)=\int_{s}^{t}  \,G_r(\mu_{r-})dr +\int_{s}^{t}  \,H_r(\mu_{r-})dY_r\nonumber\\&
    +\tilde{\mathbb{E}}\left[  \int_{s}^{t}  \,\partial_{\mu}F(r-,\mu_{r-},\tilde{X}_{r-}) d\tilde{X}_r+\frac{1}{2}\partial_x\partial_{\mu}F(r-,\mu_{r-},\tilde{X}_{r-}):d[\tilde{X},\tilde{X}]_r^c \right]\nonumber\\&
    +\sum_{s<r\leq t}\left\{F(r,\mu_r)-F(r-,\mu_{r-})-H_r(\mu_{r-})\Delta Y_r\right\}\nonumber\\&
    +\tilde{\mathbb{E}}\bigg[ \sum_{s<r\leq t}\left\{ (\frac{\delta F}{\delta\mu}(r-,\mu_{r-},\tilde{X}_r)-\frac{\delta F}{\delta\mu}(r-,\mu_{r-},\tilde{X}_{r-}))\mathbf{1}_{\{\mu_r=\mu_{r-}\}}\right\}\nonumber\\&\qquad
    -\sum_{s<r\leq t}\left\{ \partial_{\mu}F(r-,\mu_{r-},\tilde{X}_{r-})\Delta \tilde{X}_r\right\}\bigg],\quad a.s.,
    \end{align} 
where $\tilde{X}$ denotes an independent and identically distributed copy of $X$ on a copy $\tilde{\Omega}$ of $\Omega$.
\end{theorem}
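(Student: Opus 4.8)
The plan is to prove the formula first for $\mathcal{C}^2$-cylindrical functions, where everything reduces to the classical It\^o-Wentzell formula of Theorem~\ref{standitowentzellsemimart} applied in $\mathbb{R}^n$, and then to pass to general $RF$-$Partially$-$\mathcal{C}^2$ random fields by a three-stage approximation that preserves adaptedness. For the cylindrical case, write $F(t,\mu)=f(\langle\mu,g^{(1)}\rangle,\dots,\langle\mu,g^{(n)}\rangle)$ with $f\in C^2(\mathbb{R}^n)$ and $g^{(i)}\in C^2_b(\mathbb{R}^d)$, and set $Z_t^{(i)}:=\langle\mu_t,g^{(i)}\rangle=\mathbb{E}[g^{(i)}(X_t)]$. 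The key observation is that $Z^{(i)}$ is itself a (deterministic-in-$\omega$, but genuinely time-varying) semimartingale: applying Theorem~\ref{standitosemimart} to $g^{(i)}(X_t)$ and taking expectations (legitimate under Assumption~\ref{assumptionX}) gives an explicit decomposition of $Z^{(i)}$ into a finite-variation part plus the expectation of the jump sum; the martingale part vanishes under $\mathbb{E}$. Now $F(t,\mu_t)=F(t,Z_t)$ where $Z_t=(Z_t^{(1)},\dots,Z_t^{(n)})$ and, because $f$ also has the time-dependence inherited from $F_0,G,H$, one applies the $\mathbb{R}^n$-valued It\^o-Wentzell formula of Theorem~\ref{standitowentzellsemimart} (with the ``state process'' being the deterministic semimartingale $Z$ and the driving random field being the cylindrical coefficient family). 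Then one identifies the resulting terms: $\partial_{z_i}f\cdot\langle g^{(i)},\cdot\rangle$ becomes $\partial_\mu F$ paired against $dZ^{(i)}$, which by the decomposition of $Z^{(i)}$ and a Fubini exchange equals $\tilde{\mathbb{E}}[\partial_\mu F(r-,\mu_{r-},\tilde X_{r-})d\tilde X_r+\tfrac12\partial_x\partial_\mu F:d[\tilde X,\tilde X]_r^c]$ plus jump corrections; the second-order term $\partial_{z_iz_j}f$ contributes, via $[Z^{(i)},Z^{(j)}]=0$ (since $Z$ is of finite variation with continuous martingale part zero), only through the jumps, which recombine with the Taylor remainder of $f$ to produce exactly the linear-derivative jump terms $\frac{\delta F}{\delta\mu}(r-,\mu_{r-},\tilde X_r)-\frac{\delta F}{\delta\mu}(r-,\mu_{r-},\tilde X_{r-})$ on the event $\{\mu_r=\mu_{r-}\}$ together with the $\{F(r,\mu_r)-F(r-,\mu_{r-})\}$ jump of $F$ in its time argument. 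This bookkeeping is the bulk of the cylindrical-case proof and mirrors the computation in \cite{guo2023ito}, now with the extra Wentzell terms $\int H_r(\mu_{r-})dY_r$ and the cross term already absorbed since $\partial_\mu H$ and the $[X,Y]$-type corrections appear inside the $\tilde{\mathbb{E}}$.

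For the extension, I would run the localization/approximation in four steps, exactly as sketched in the introduction. \emph{Step 1 (truncation):} replace $X,Y$ by stopped processes $X^K,Y^K$ with $\tau_K=\inf\{t:|X_{t-}|\vee|Y_{t-}|>K\}$ so that the state process stays in a compact ball; by the stopping-time rules for stochastic integrals and dominated convergence for stochastic integrals (Assumption~\ref{assumptionX} giving the $L^2$ bounds), both sides of \eqref{itowentzellc1} for $X^K$ converge to the corresponding sides for $X$ as $K\to\infty$, so it suffices to prove the formula when $X_{t-},Y_{t-}$ are bounded. \emph{Step 2 (finite-dimensional reduction):} fix a large ball $B_R$; since $\mu_{r-}$ only enters through its values, approximate $F(r,\cdot)$ on the relevant compact family of measures by deterministic functions of the finitely many moments/test-function integrals — i.e., reduce to random fields whose measure-dependence factors through a fixed finite-dimensional projection, using the estimates in Definition~\ref{def RF-P-C2} and (for growth) the scheme of Section~4.2 of \cite{guo2024ito}. \emph{Step 3 (simple functions in $(t,x)$):} approximate the resulting deterministic time-space coefficients $F_0,G_s,H_s$ by simple functions piecewise constant in $s$ and in the spatial variables; this keeps $\hat{\mathscr{F}}_t$-adaptedness (Assumption~2 is essential here — the approximation is done on the common filtration generated by $N,U$, so measurability in $t$ is not destroyed). \emph{Step 4 (cylindrical approximation):} apply Proposition~\ref{cylindricalapproximationprop} to each simple building block to obtain $\mathcal{C}^2$-cylindrical functions with the uniform bounds $|f^n|+|\partial_\mu f^n|+|\partial_x\partial_\mu f^n|\le C$ and pointwise convergence of all derivatives, then invoke the cylindrical case and pass to the limit. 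The uniform bounds together with Assumption~\ref{assumptionX} give the domination needed to take limits in the Lebesgue integrals, the It\^o (isometry) integrals, the finite-variation integrals, and — using $\mathbb{E}[\sum_{0<t\le T}|\Delta X_t|]<\infty$ and $\mathbb{E}[\sum_{0<t\le T}|\Delta Y_t|]<\infty$ — the jump sums and their $\tilde{\mathbb{E}}$-counterparts; the interchange of $\partial_\mu$ with the stochastic integral needed to identify $\partial_\mu F$ of the limit with the limit of $\partial_\mu f^n$ is supplied by the Leibniz rule of Appendix Lemma~\ref{lemma Leibniz}.

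The main obstacle is the adaptedness issue flagged in the introduction and handled in Steps 2--3: the cylindrical approximation of Proposition~\ref{cylindricalapproximationprop} is only available for a \emph{deterministic} univariate map $\mu\mapsto F(\mu)$, so applying it pointwise in $(\omega,t)$ would yield approximants $f^n_{(\omega,t)}$ that are no longer jointly measurable in $t$, hence not adapted, and the It\^o-Wentzell formula would not apply to them. The resolution — first peeling off the randomness by freezing $\omega$ only after reducing to finitely many deterministic coordinates (Step 1--2), then discretizing $t$ into finitely many intervals so that on each interval the coefficient is a fixed deterministic function to which Proposition~\ref{cylindricalapproximationprop} applies (Step 3), and only then invoking cylindrical approximation — is the delicate point, and verifying that each of these three passages to the limit commutes with all the (stochastic and pathwise) integrals and jump sums in \eqref{itowentzellc1}, uniformly enough to conclude, is where the real work lies. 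The treatment of the conditional-measure version in Section~4 will follow the same template with $\mu_t=\mathbb{P}_{X_t\mid\mathscr{G}_t}$ and conditional expectations replacing $\tilde{\mathbb{E}}$ throughout.
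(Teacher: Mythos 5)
Your proposal follows essentially the same route as the paper: the cylindrical case is handled by applying the classical It\^o--Wentzell formula of Theorem~\ref{standitowentzellsemimart} to $f(t,\mathbf{Z}_t)$ with $Z^j_t=\mathbb{E}[\eta_j(X_t)]$ deterministic and of finite variation (so the quadratic-covariation terms vanish and the jump sums recombine into the $\frac{\delta F}{\delta\mu}$ terms on $\{\mu_r=\mu_{r-}\}$), followed by localization and the same multi-stage adapted approximation (finitely many coordinates of $U,N$, time/space simple functions, then cylindrical functions), with the Leibniz rule of Lemma~\ref{lemma Leibniz} supplying the interchange of $\partial_\mu$ with the stochastic integral. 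The only imprecision is in your ``Step 2 (finite-dimensional reduction)'': in the paper's Lemma~\ref{approximationlemma} it is the $\omega$-dependence that is factored through finitely many coordinates of $(U,N)$ (the measure-dependence is only reduced at the final cylindrical stage), but your subsequent discussion of the adaptedness obstacle shows you have the correct mechanism in mind.
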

The proof is divided into 3 steps.\\
{\bf{Step 1}} Construct It{\^o}-Wentzell-Lions formula for cylindrical functions.\\
Consider $F(t,\mu_t)=F_0(\mu_0)+\int_{0}^{t}G_s(\mu_s)ds+\int_{0}^{t}H_s(\mu_s)dY_s$ with $F_0(\cdot)$, $G_s(\omega,\cdot)$, $H_s(\omega,\cdot)$ cylindrical functions for any $(\omega,s)\in \Omega\times[0,T]$, which admit the following expressions
\begin{eqnarray*}
    F_0(\mu)=f_0(\langle \eta^F_1,\mu \rangle,\dots \langle \eta_{n_1}^F,\mu\rangle ),\\
    G_s(\omega,\mu)=g_s(\omega,\langle \eta_1^G,\mu\rangle,\dots \langle\eta_{n_2}^G,\mu \rangle ),\\
    H_s(\omega,\mu)=h_s(\omega,\langle \eta^H_1,\mu\rangle,\dots \langle\eta^H_{n_3},\mu \rangle ),
\end{eqnarray*}
with $f_0$ being deterministic, $g$ being $\{\hat{\mathscr{F}}_t\}_{t\in[0,T]}$-adapted and $h$ being $\{\hat{\mathscr{F}}_t\}_{t\in[0,T]}$-predictable.\\
For convenience, we set $\{\eta_1,\dots,\eta_N\}:=\{\eta^F_1,\dots,\eta^F_{n_1},\eta^G_1,\dots,\eta^G_{n_2},\eta^H_1,\dots,\eta^H_{n_3}\}$, hence $N=n_1+n_2+n_3$. We then extend $f_0(\cdot),g_s(\omega,\cdot),h_s(\omega,\cdot)$ to $\mathbb{R}^N$ in a manner that is consistent with their original domains, but for simplicity, we still denote them by $f_0(\cdot),g_s(\omega,\cdot),h_s(\omega,\cdot)$. By Definition \ref{cylindricalfun}, we know that $\eta_j\in C^2_b(\mathbb{R}^d)$, $\forall j$.
For any $t\in [0,T]$, define $Z^j_t:=\langle \eta_j,\mu_t\rangle =\mathbb{E}[\eta_j(X_t)]$ and $\mathbf{Z}_t:=(Z^1_t,\cdots,Z_t^N)$.
By applying Theorem \ref{standitosemimart} to $\eta_j(X_t)$, we have
\begin{align}\label{itoforeta}
\eta_j(X_t)-\eta_j(X_s)=&\int_{s}^{t}  \,\partial_x\eta_j(X_{r-})dX_r+\frac{1}{2}\int_{s}^{t}  \,\partial _{xx}\eta_j(X_{r-}):d[X,X]^c_r\nonumber\\
&+\sum_{s<r\leq t}\{\eta_j(X_r)-\eta_j(X_{r-})-\partial_x\eta_j(X_{r-})\Delta X_r\}, \ \ \ a.s.
\end{align}
In order to take expectation on both sides of \eqref{itoforeta}, we assume that $X$ is bounded almost surely, hence $Z^j$ is also bounded. Then taking expectation yields
\begin{align}\label{integralz}
Z_t^j-Z_s^j=&\,\mathbb{E}\bigg[\int_{s}^{t}  \,\partial_x\eta_j(X_{r-})dX_r+\frac{1}{2}\int_{s}^{t}  \,\partial _{xx}\eta_j(X_{r-}):d[X,X]^c_r\nonumber\\
&\quad+\sum_{s<r\leq t}\{\eta_j(X_r)-\eta_j(X_{r-})-\partial_x\eta_j(X_{r-})\Delta X_r\}\bigg].
\end{align}
Now, we view $F(t,\mu_t)$ as a function of $(t,\mathbf{Z}_t)$ denoted by $f(t,\mathbf{Z}_t)$, which is expressed as
\begin{eqnarray*}&&
f(t,Z_t^1,\dots,Z_t^N)=f_0(Z_0^1,\dots,Z_0^N)+\int_{0}^{t}g_r(Z_r^1,\dots,Z_r^N)dr +\int_{0}^{t}h_r(Z_r^1,\dots,Z_r^N)dY_r.
\end{eqnarray*}
Then, thanks to Theorem \ref{standitowentzellsemimart}, we obtain
\begin{equation}\label{itoforf}\begin{split} 
F(t,\mu_t)-&F(s,\mu_s)=f(t,\mathbf{Z}_t)-f(s,\mathbf{Z}_s)\\
=&\int_{s}^{t}  \,g_r(\mathbf{Z}_{r-})dr+\int_{s}^{t}  \,h_r(\mathbf{Z_{r-}})dY_r +\underbrace{\sum_j\int_{s}^{t}  \,\partial_{z_j}f(r-,\mathbf{Z}_{r-})dZ^j_r}_{I_1}\\
&+\underbrace{\frac{1}{2}\sum_{j,k}\int_{s}^{t}  \,\partial_{z_j z_k}f(r-,\mathbf{Z}_{r-})d[Z^j,Z^k]^c_r}_{I_2} +\underbrace{\sum_j \int_{s}^{t}  \,\partial_{z_j}h_r(\mathbf{Z}_{r-})d[Z^j,Y]^c_r}_{I_3}\\
&+\underbrace{\sum_{s<r\leq t}\{f(r,\mathbf{Z}_r)-f(r-,\mathbf{Z}_{r-})-\sum_j \partial_{z_j}f(r-,\mathbf{Z}_{r-})\Delta Z_r^j-h_r(\mathbf{Z}_{r-})\Delta Y_r\}}_{I_4}.
\end{split}\end{equation}
For $I_1$, since
 $Z^j=\mathbb{E}[\eta_j(X)]$ is a deterministic process and $X$ is of finite variation, then $Z^j$ is a c{\`a}dl{\`a}g process of finite variation. Also note that $Y$ is a c{\`a}dl{\`a}g stochastic process of finite variation. Hence, we may take a partition $\pi_{s,t}^m:=\{s=t_0^m<\cdots<t^m_m=t\},m\in \mathbb{N}$, such that $\max_k\left\lvert t^m_k-t^m_{k+1}\right\rvert \to 0$, $\max_k \sup_{r\in (t^m_k,t^m_{k+1}]} \left\lvert \mathbf{Z}_{r-}-\mathbf{Z}_{t^m_{k}}\right\rvert\to 0 $, and $\max_k \sup_{r\in (t^m_k,t^m_{k+1}]} \left\lvert Y_{r-}-Y_{t^m_{k}}\right\rvert\to 0$, a.s.
Then, by the definition of integral, 
\begin{eqnarray*}
I_1=\sum_j\int_{s}^{t}  \,\partial_{z_j}f(r-,\mathbf{Z}_{r-})dZ^j_r=\lim_{m\to \infty}\sum_{\pi_{s,t}}\sum_j\partial_{z_j}f(t^m_k,\mathbf{Z}_{t^m_k})(Z^j_{t^m_{k+1}}-Z^j_{t^m_k}).
\end{eqnarray*}
Combining the above formula with (\ref{integralz}) and Funibi's theorem yields
\begin{align*}
I_1=&\lim_{m\to \infty}\sum_{k=0}^m \tilde{\mathbb{E}}\bigg[ \int_{t^m_k}^{t^m_{k+1}}  \,\sum_{j}\partial_{z_j}f(t_k^m,\mathbf{Z}_{t^m_k})\partial_x\eta_j(\tilde{X}_{r-}) d\tilde{X}_r\\
 &\qquad\qquad\qquad+\frac{1}{2}\int_{t^m_k}^{t^m_{k+1}}  \,\sum_j \partial_{z_j}f(t_k^m,\mathbf{Z}_{t^m_k})\partial_{xx}\eta_j(\tilde{X}_{r-}):d[\tilde{X},\tilde{X}]^c_r \\
&\qquad\qquad\qquad+\partial_{z_j}f(t_k^m,\mathbf{Z}_{t^m_k}) \sum_{t_k^m<r\leq t^m_{k+1}}\{\eta_j(\tilde{X}_r)-\eta_j(\tilde{X}_{r-})-\partial_x\eta(\tilde{X}_{r-})\Delta \tilde{X}_r\} \bigg].
\end{align*}
Then, by taking limit $m\to\infty$, we deduce 
\begin{align*}
I_1=&\tilde{\mathbb{E}}\bigg[ \int_{s}^{t}\, \sum_{j}\partial_{z_j}f(r-,\mathbf{Z}_{r-})\partial_x\eta_j(\tilde{X}_{r-})d\tilde{X}_r\\
&\qquad+\frac{1}{2}\int_{s}^{t}\, \sum_{j}\partial_{z_j}f(r-,\mathbf{Z}_{r-})\partial_{xx}\eta_j(\tilde{X}_{r-}):d[\tilde{X},\tilde{X}]^c_r \\
&\qquad+\sum_{s<r\leq t}\{ \partial_{z_j}f(r-,\mathbf{Z}_{r-})(\eta_j(\tilde{X}_r)-\eta_j(\tilde{X}_{r-})-\partial_x\eta(\tilde{X}_{r-})\Delta \tilde{X}_r) \} \bigg].
\end{align*}
On the other hand, a direct calculus provides
\begin{align}\label{interchangederivativewithitointegral}
\partial_{\mu}F(t,\mu_t,X_t)=&\,\partial_{\mu}F_0(\mu_t,X_t)+\int_{0}^{t}  \,\partial_{\mu}G_s(\mu_s,X_s)ds +\int_{0}^{t}  \,\partial_{\mu}H_s(\mu_s,X_s)dY_s\nonumber \\
=&\sum_j\partial_{z_j} f_0(\mathbf{Z}_t)\partial_x\eta_j(X_t)+\int_{0}^{t}  \,\sum_j\partial_{z_j} g_s(\mathbf{Z}_s)\partial_x\eta_j(X_s)ds\nonumber \\
&+\int_{0}^{t}  \,\sum_j\partial_{z_j} h_s(\mathbf{Z}_s)\partial_x\eta_j(X_s)dY_s\nonumber  \\
=&\sum_j\partial_{z_j} f(t,\mathbf{Z}_t)\partial_x\eta_j(X_t),
\end{align}
and
\begin{align}\label{directcalculus}
\partial_x\partial_{\mu}F(t,\mu_t,X_t)=\sum_j\partial_{z_j} f(t,\mathbf{Z}_t)\partial_{xx}\eta_j(X_t),\ \ \ 
\frac{\delta F}{\delta \mu}(t,\mu_t,X_t)=\sum_j\partial_{z_j} f(t,\mathbf{Z}_t)\eta_j(X_t).
\end{align}
Note that in order to get the first equality of \eqref{interchangederivativewithitointegral}, we interchange the Fr\'echet derivative with stochastic integral due to Lemma \ref{lemma Leibniz} in Appendix B.\\
Therefore, 
\begin{align*}
I_1=&\,\tilde{\mathbb{E}}\bigg[ \int_{s}^{t}  \,\partial_{\mu}F(r-,\mu_{r-},\tilde{X}_{r-})d\tilde{X}_r+\frac{1}{2}\int_{s}^{t}  \,\partial_x\partial_{\mu}F(r-,\mu_{r-},\tilde{X}_{r-}):d[\tilde{X},\tilde{X}]_r^c \\
&\quad+\sum_{s<r\leq t}\left\{ \frac{\delta F}{\delta \mu}(r-,\mu_{r-},\tilde{X}_{r})-\frac{\delta F}{\delta \mu}(r-,\mu_{r-},\tilde{X}_{r-}) -\partial_{\mu}F(r-,\mu_{r-},\tilde{X}_{r-})\Delta \tilde{X}_r\right\} \bigg].
\end{align*}
For $I_2$, note that $Z^j$ is deterministic and admits expression (\ref{integralz}), we have
\begin{align*}
d(Z^j_r)^c&=\mathbb{E} \Big[ \partial_x\eta_j(X_{r-})dX^c_r+\frac{1}{2}\partial_{xx}\eta_j(X_{r-}):d[X,X]^c_r\Big]\\
&=\mathbb{E}\Big[\partial_x\eta_j(X_{r-})dV^c_r+\frac{1}{2}\partial_{xx}\eta_j(X_{r-}):d[M,M]_r\Big].
\end{align*}
Then, by the finite variation of $V$, one has $[Z^j,Z^i]_r^c=0$, $\forall i,j$. Hence, $I_2=0$.\\
For $I_3$, we rewrite it as following
\begin{eqnarray*}
I_3=\sum_j \sum_{n=1}^l\int_{s}^{t}  \,\partial_{z_j}h_r^{(n)}(\mathbf{Z}_{r-})d[Z^j,Y^{(n)}]^c_r,
\end{eqnarray*}
where $(n)$ denotes the $n-$th component of $Y$.
Note again that $Z^j$ is deterministic, we obtain
\begin{equation*}
(dZ^j_r dY^{(n)}_r)^c=\tilde{\mathbb{E}}\big[\partial_x\eta_j(\tilde{X}_{r-})(d\tilde{V}_rdY_r^{(n)})^c+\frac{1}{2}\partial_{xx}\eta_j(\tilde{X}_{r-}):(d[\tilde{M},\tilde{M}]_rdY^{(n)}_r)^c\big],
\end{equation*}
therefore, $(dZ^j_r dY^{(n)}_r)^c=0$, which yields $I_3=0$.\\
For $I_4$, by \eqref{directcalculus} we have
\begin{align*}
I_4=&\sum_{s<r\leq t}\bigg\{ F(r,\mu_r)-F(r-,\mu_{r-})-h_r(\mathbf{Z}_{r-})(Y_r-Y_{r-})-\sum_j \partial_{z_j}f(r-,\mathbf{Z}_{r-})(Z^j_r-Z^j_{r-})\bigg\}\\
=&\sum_{s<r\leq t}\bigg\{ F(r,\mu_r)-F(r-,\mu_{r-}) -H(r-,\mu_{r-})\Delta Y_r \\
&\qquad\qquad-\tilde{\mathbb{E}}\left[ \frac{\delta F}{\delta\mu}(r-,\mu_{r-},\tilde{X}_{r})-\frac{\delta F}{\delta\mu}(r-,\mu_{r-},\tilde{X}_{r-}) \right] \bigg\}.
\end{align*}
The last term in the above equality can be written as
\begin{equation}\label{eqsum}\begin{split}
&\tilde{\mathbb{E}}\bigg[ \sum_{s<r\leq t}\left( \frac{\delta F}{\delta\mu}(r-,\mu_{r-},\tilde{X}_{r})-\frac{\delta F}{\delta\mu}(r-,\mu_{r-},\tilde{X}_{r-}) \right) \bigg]\\
=&\,\tilde{\mathbb{E}}\bigg[\sum_{s<r\leq t} \left( \frac{\delta F}{\delta\mu}(r-,\mu_{r-},\tilde{X}_{r})-\frac{\delta F}{\delta\mu}(r-,\mu_{r-},\tilde{X}_{r-}) \right)\mathbf{1}_{\{\mu_r\neq \mu_{r-}\}} \bigg]\\
&+\tilde{\mathbb{E}}\bigg[ \sum_{s<r\leq t}\left( \frac{\delta F}{\delta\mu}(r-,\mu_{r-},\tilde{X}_{r})-\frac{\delta F}{\delta\mu}(r-,\mu_{r-},\tilde{X}_{r-}) \right)\mathbf{1}_{\{\mu_r= \mu_{r-}\}} \bigg].
\end{split}\end{equation}
Since $\mu_\cdot$ is a $\mathcal{P}_2(\mathbb{R}^d)$-valued c{\`a}dl{\`a}g process and $\{r\in[s,t]\,|\,\mu_r\neq\mu_{r-}\}$ is a countable set, we can interchange the sum and expectation of the first term on the right hand side of \eqref{eqsum} and get
\begin{align}\label{eq sum explain1}
&\tilde{\mathbb{E}}\bigg[ \sum_{s<r\leq t}\left( \frac{\delta F}{\delta\mu}(r-,\mu_{r-},\tilde{X}_{r})-\frac{\delta F}{\delta\mu}(r-,\mu_{r-},\tilde{X}_{r-}) \right) \bigg]\nonumber\\
=&\sum_{s<r\leq t}\tilde{\mathbb{E}}\bigg[ \left( \frac{\delta F}{\delta\mu}(r-,\mu_{r-},\tilde{X}_{r})-\frac{\delta F}{\delta\mu}(r-,\mu_{r-},\tilde{X}_{r-}) \right)\mathbf{1}_{\{\mu_r\neq \mu_{r-}\}} \bigg]\\
&+\tilde{\mathbb{E}}\bigg[ \sum_{s<r\leq t}\left( \frac{\delta F}{\delta\mu}(r-,\mu_{r-},\tilde{X}_{r})-\frac{\delta F}{\delta\mu}(r-,\mu_{r-},\tilde{X}_{r-}) \right)\mathbf{1}_{\{\mu_r= \mu_{r-}\}} \bigg].\nonumber
\end{align}
Meanwhile, on the set $\{\mu_r=\mu_{r-}\}$, it holds $\tilde{\mathbb{E}}[ \frac{\delta F}{\delta\mu}(r-,\mu_{r-},\tilde{X}_{r})]=\tilde{\mathbb{E}}[\frac{\delta F}{\delta\mu}(r-,\mu_{r-},\tilde{X}_{r-})]$, hence
\begin{align}\label{eq sum explain2}&
\sum_{s<r\leq t}\tilde{\mathbb{E}}\left[ \left( \frac{\delta F}{\delta\mu}(r-,\mu_{r-},\tilde{X}_{r})-\frac{\delta F}{\delta\mu}(r-,\mu_{r-},\tilde{X}_{r-}) \right) \right]\nonumber\\=&\,
\tilde{\mathbb{E}}\bigg[ \sum_{s<r\leq t}\left( \frac{\delta F}{\delta\mu}(r-,\mu_{r-},\tilde{X}_{r})-\frac{\delta F}{\delta\mu}(r-,\mu_{r-},\tilde{X}_{r-}) \right)\mathbf{1}_{\{\mu_r\neq\mu_{r-}\}} \bigg].
\end{align}
So we conclude by subtracting \eqref{eq sum explain2} from \eqref{eq sum explain1}  that
\begin{align*}
&\tilde{\mathbb{E}}\bigg[ \sum_{s<r\leq t}\left( \frac{\delta F}{\delta\mu}(r-,\mu_{r-},\tilde{X}_{r})-\frac{\delta F}{\delta\mu}(r-,\mu_{r-},\tilde{X}_{r-}) \right) \bigg]\\
&\quad-\sum_{s<r\leq t}\mathbb{E}\left[ \left( \frac{\delta F}{\delta\mu}(r-,\mu_{r-},\tilde{X}_{r})-\frac{\delta F}{\delta\mu}(r-,\mu_{r-},\tilde{X}_{r-}) \right) \right]\\
=&\,\tilde{\mathbb{E}}\bigg[ \sum_{s<r\leq t}\left( \frac{\delta F}{\delta\mu}(r-,\mu_{r-},\tilde{X}_{r})-\frac{\delta F}{\delta\mu}(r-,\mu_{r-},\tilde{X}_{r-}) \right)\mathbf{1}_{\{\mu_r=\mu_{r-}\}} \bigg].
\end{align*}
Until now, we have proven formula (\ref{itowentzellc1}) for cylindrical functions with bounded $X$.
\\{\bf{Step 2}} Localization argument.\\
Take $\tau_n:=\inf\{t:X_t\geq n\}$. 
Due to the definition and the local boundedness of $X$, it is clear that $\tau_n\to T$, as $n\to\infty$.\\
Denote $(X_t^n,\mu_{t}^n,Y_{t}^n):=(X_{t\wedge \tau_n},\mu_{t\wedge \tau_n},Y_{t\wedge \tau_n})$, by Step 1, the following relation holds true
\begin{align}\label{itowentzellc1local}
F(t&\wedge \tau_n,\mu^n_t)-F(s\wedge \tau_n,\mu^n_s)=\int_{s}^{t}  \,G_r(\mu^n_{r-})dr +\int_{s}^{t}  \,H_r(\mu^n_{r-})dY^n_r\nonumber\\&
    +\tilde{\mathbb{E}}\left[\int_{s}^{t}  \,\partial_{\mu}F((r-)\wedge \tau_n,\mu^n_{r-},\tilde{X}^n_{r-}) d\tilde{X}^n_r\right]+\tilde{\mathbb{E}}\left[\frac{1}{2}\partial_x\partial_{\mu}F((r-)\wedge \tau_n,\mu^n_{r-},\tilde{X}^n_{r-}):d[\tilde{X}^n,\tilde{X}^n]_r^c \right]\nonumber\\&
    +\sum_{s<r\leq t}\left\{F(r\wedge \tau_n,\mu^n_r)-F((r-)\wedge \tau_n,\mu^n_{r-})-H_r(\mu^n_{r-})\Delta Y^n_r\right\}\nonumber\\&
    +\tilde{\mathbb{E}}\bigg[ \sum_{s<r\leq t}\left\{ (\frac{\delta F}{\delta\mu}((r-)\wedge \tau_n,\mu^n_{r-},\tilde{X}^n_r)-\frac{\delta F}{\delta\mu}((r-)\wedge \tau_n,\mu^n_{r-},\tilde{X}^n_{r-}))\mathbf{1}_{\{\mu^n_r=\mu^n_{r-}\}}\right\} \nonumber\\&\qquad
    -\sum_{s<r\leq t}\left\{ \partial_{\mu}F((r-)\wedge \tau_n,\mu^n_{r-},\tilde{X}^n_{r-})\Delta \tilde{X}^n_r\right\}\bigg].
\end{align}
For the first term on the right hand side of (\ref{itowentzellc1local}), it is obvious that
\begin{equation*}
\int_{s}^{t}G_r(\mu^n_{r-})dr=\int_{s}^{t}G_r(\mu_{r-})\mathbf{1}_{[0,\tau_n]}(r)dr.
\end{equation*}
By Cauchy-Schwarz inequality and the integrability of $G$, we obtain
\begin{align*}
\left\lvert \int_{s}^{t}G_r(\mu_{r-})\mathbf{1}_{[0,\tau_n]}dr\right\rvert &\leq \int_{s}^{t}\left\lvert G_r(\mu_{r-})\right\rvert \mathbf{1}_{[0,\tau_n]}dr
\leq \bigg(\int_{s}^{t}  \,\left\lvert G_r(\mu_{r-})\right\rvert^2 dr\bigg)^{\frac{1}{2}}(t-s)^{\frac{1}{2}}\\
&\leq C^{\frac{1}{2}}(t-s)^{\frac{1}{2}},\quad a.s.
\end{align*}
Hence, we deduce by the dominated convergence theorem that,
$$\lim_{n\to\infty}\int_{s}^{t}G_r(\mu^n_{r-})dr=\int_{s}^{t}G_r(\mu_{r-})dr,\quad a.s.$$\\
For the second term, it follows the stopping rule for stochastic integral that
\begin{equation*}
\int_{s}^{t}  \,H_r(\mu^n_{r-})dY^n_r=\int_{s}^{t}  \,H_r(\mu_{r-})\mathbf{1}_{[0,\tau_n]}dY_r=\int_{s}^{t}  \,H_r(\mu_{r-})\mathbf{1}_{[0,\tau_n]}dU_r+\int_{s}^{t}  \,H_r(\mu_{r-})\mathbf{1}_{[0,\tau_n]}dN_r.
\end{equation*}
Since $U$ is of finite variation, it holds $\lim_{n\to\infty}\int_{s}^{t}H_r(\mu^n_{r-})dU_r=\int_{s}^{t}H_r(\mu_{r-})dU_r$, a.s. For the stochastic term, by It{\^o} isometry, we have
\begin{align*}
\sup_{n}\mathbb{E}\bigg[ \Big\lvert \int_{s}^{t}  \,H_r(\mu_{r-})\mathbf{1}_{[0,\tau_n]} dN_r \Big\rvert^2  \bigg]
=&\sup_{n}\mathbb{E}\left[  \int_{s}^{t}  \,\left\lvert H_r(\mu_{r-})\mathbf{1}_{[0,\tau_n]}\right\rvert^2 \cdot d[Y,Y]_r   \right]\\
\leq& C\mathbb{E}\big[ [Y,Y]_t \big] < \infty,
\end{align*}
which implies the uniform integrability of $\{\int_{s}^{t}  \,H_r(\mu_{r-})\mathbf{1}_{[0,\tau_n]} dN_r\}_n$, hence
\begin{equation*}
\int_{s}^{t}  \,H_r(\mu_{r-})\mathbf{1}_{[0,\tau_n]} dN_r \to \int_{s}^{t}  \,H_r(\mu_{r-}) dN_r, \ \ \ \mbox{in} \ L^2(\Omega).
\end{equation*}
Then we can take a subsequence $\{\tau_{n_k}\}_{k\in\mathbb{N}}$ s.t. when $k\to\infty$,
\begin{equation*}
\int_{s}^{t}  \,H_r(\mu_{r-})\mathbf{1}_{[0,\tau_{n_k}]} dN_r \to \int_{s}^{t}  \,H_r(\mu_{r-}) dN_r, \ \ \ a.s.
\end{equation*}
and still denote $\{\tau_{n_k}\}_{k\in\mathbb{N}}$ as $\{\tau_n\}_{n\in\mathbb{N}}$.\\
For the third term, still by the stopping rule for stochastic integral, one has
\begin{eqnarray*}
\int_{s}^{t}  \,\partial_{\mu}F((r-)\wedge\tau_n,\mu_{r-}^n,\tilde{X}_{r-}^n) d\tilde{X}_r^n=\int_{s}^{t}  \,\partial_{\mu}F(r-,\mu_{r-},\tilde{X}_{r-})\mathbf{1}_{[0,\tau_n]} d\tilde{X}_r.
\end{eqnarray*}

Due to the expression of $X$, it holds
\begin{align}\label{eq step2 partial_mu F}
&\mathbb{E}\bigg[\bigg|\tilde{\mathbb{E}}\Big[\int_s^t\,(\partial_\mu F(r-,\mu_{r-},\tilde{X}_{r-})\mathbf{1}_{[0,\tau_n]}-\partial_\mu F(r-,\mu_{r-},\tilde{X}_{r-}))d\tilde{X}_r\Big]\bigg|^2\bigg]\nonumber\\
\leq &\,2\mathbb{E}\tilde{\mathbb{E}}\bigg[\Big|\int_s^t\, (\partial_\mu F\mathbf{1}_{[0,\tau_n]}-\partial_\mu F)d\tilde{V}_r\Big|^2+\Big|\int_s^t\, (\partial_\mu F\mathbf{1}_{[0,\tau_n]}-\partial_\mu F)d\tilde{M}_r\Big|^2\bigg],
\end{align}
here and after we omit the variables of the functions for convenience. By Fubini's Theorem and Cauchy-Schwarz inequality, we have
\begin{align*}
\mathbb{E}\tilde{\mathbb{E}}\bigg[\Big|\int_s^t\, (\partial_\mu F\mathbf{1}_{[0,\tau_n]}-\partial_\mu F)d\tilde{V}_r\Big|^2\bigg]\leq \tilde{\mathbb{E}}\bigg[\int_s^t\,\mathbb{E}|\partial_\mu F\mathbf{1}_{[0,\tau_n]}-\partial_\mu F|^2d\tilde{V}_r\cdot Var(\tilde{V})_{[s,t]}\bigg].
\end{align*}
From the fact that $\mathbb{E}[|\partial_\mu F(r-,\mu_{r-},X_{r-})\mathbf{1}_{[0,\tau_n]}|^2]\leq \sup_{t,\mu,x}\mathbb{E}[|\partial_\mu F(t,\mu,x)|^2]<\infty$, we know
\begin{align*}
\mathbb{E}\tilde{\mathbb{E}}\bigg[\Big|\int_s^t\, (\partial_\mu F\mathbf{1}_{[0,\tau_n]}-\partial_\mu F)d\tilde{V}_r\Big|^2\bigg]\leq 4\tilde{\mathbb{E}}\bigg[\int_s^t\,\sup_{t,\mu,x}\mathbb{E}|\partial_\mu F|^2d\tilde{V}_r\cdot Var(\tilde{V})_{[s,t]}\bigg]<\infty.
\end{align*}
Hence, by the dominated convergence theorem, we obtain
\begin{align*}
\mathbb{E}\tilde{\mathbb{E}}\bigg[\Big|\int_s^t\, (\partial_\mu F\mathbf{1}_{[0,\tau_n]}-\partial_\mu F)d\tilde{V}_r\Big|^2\bigg]\to 0,\quad\mbox{as}\ n\to\infty.
\end{align*}
Next, by Fubini's Theorem and It{\^o} isometry, one has
\begin{align*}
\mathbb{E}\tilde{\mathbb{E}}\bigg[\Big|\int_s^t\, (\partial_\mu F\mathbf{1}_{[0,\tau_n]}-\partial_\mu F)d\tilde{M}_r\Big|^2\bigg]= \tilde{\mathbb{E}}\bigg[\int_s^t\,\mathbb{E}|\partial_\mu F\mathbf{1}_{[0,\tau_n]}-\partial_\mu F|^2d[\tilde{M},\tilde{M}]_r\bigg].
\end{align*}
Hence, similarly to the above and by the dominated convergence theorem, we deduce 
\begin{align*}
\mathbb{E}\tilde{\mathbb{E}}\bigg[\Big|\int_s^t\, (\partial_\mu F\mathbf{1}_{[0,\tau_n]}-\partial_\mu F)d\tilde{M}_r\Big|^2\bigg]\to 0,\ \ {\text{as}} \ 
 n\to\infty.
\end{align*}
Combining the above convergence with \eqref{eq step2 partial_mu F} provides
\begin{align*}
\mathbb{E}\tilde{\mathbb{E}}\bigg[\Big|\int_s^t\, (\partial_\mu F\mathbf{1}_{[0,\tau_n]}-\partial_\mu F)d\tilde{X}_r\Big|^2\bigg]\to 0,\ \ {\text{as}} \ 
 n\to\infty.
\end{align*}
Then we take a subsequence of $\{\tau_n\}$ and still denotes it as $\{\tau_n\}$, such that
\begin{align*}
\tilde{\mathbb{E}}\bigg[\int_s^t\, (\partial_\mu F\mathbf{1}_{[0,\tau_n]}-\partial_\mu F)d\tilde{X}_r\bigg]\to 0,\quad a.s.
\end{align*}
For the fourth term, we can use the same argument as above.

For the fifth term, the part
$-\sum_{s<r\leq t}H_r(\mu^n_{r-})\Delta Y^n_r$ can be absorbed into the second term $\int_{s}^{t}H_r(\mu^n_{r-})dY^n_r$ to become $\int_{s}^{t}H_r(\mu^n_{r-})(dY^n_r)^c$, whose convergence can be deduced similarly to $\int_{s}^{t}H_r(\mu^n_{r-})dY^n_r$.
Meanwhile, the following convergence is clear by the stopping time rule for stochastic integral:
\begin{equation*}
\sum_{s<r\leq t}\{F(r\wedge \tau_n,\mu^n_r)-F((r-)\wedge \tau_n,\mu^n_{r-})\}\to \sum_{s<r\leq t}\{F(r,\mu_r)-F(r-,\mu_{r-})\},\ \ \ a.s.
\end{equation*}
For the last terms, we are going to prove that
\begin{eqnarray*}&&
\tilde{\mathbb{E}}\sum_{s<r\leq t}\left\{ (\frac{\delta F}{\delta\mu}((r-)\wedge \tau_n,\mu^n_{r-},\tilde{X}_r^n)-\frac{\delta F}{\delta\mu}((r-)\wedge \tau_n,\mu^n_{r-},\tilde{X}^n_{r-}))\mathbf{1}_{\mu_r=\mu_{r-}}\right\}\\&&
\to \tilde{\mathbb{E}}\sum_{s<r\leq t}\left\{ (\frac{\delta F}{\delta\mu}(r-,\mu_{r-},\tilde{X}_r)-\frac{\delta F}{\delta\mu}(r-,\mu_{r-},\tilde{X}_{r-}))\mathbf{1}_{\mu_r=\mu_{r-}}\right\},\ \ \ a.s.
\end{eqnarray*}
From Remark 3.2 in \cite{guo2023ito}, we know that
\begin{align*}
&\sum_{s<r\leq t}(\frac{\delta F}{\delta\mu}((r-)\wedge \tau_n,\mu^n_{r-},\tilde{X}_r^n)-\frac{\delta F}{\delta\mu}((r-)\wedge \tau_n,\mu^n_{r-},\tilde{X}^n_{r-}))\mathbf{1}_{\mu_r=\mu_{r-}}\\
=&\sum_{s<r\leq t}\int_0^1\, \partial_\mu F(r-,\mu_{r-},\tilde{X}_{r-}+h\Delta \tilde{X}_r)\mathbf{1}_{[0,\tau_n]}\cdot \Delta \tilde{X}dh\\
=&\int_s^t\,\int_0^1\, \partial_\mu F(r-,\mu_{r-},\tilde{X}_{r-}+h\Delta \tilde{X}_r)\mathbf{1}_{[0,\tau_n]}dhd(\tilde{V}-\tilde{V}^c)_r.
\end{align*}
Then, its convergence can be proven by the same method as the third term. Meanwhile, the part $\sum_{s<r\leq t}\{ \partial_{\mu}F((r-)\wedge \tau_n,\mu^n_{r-},\tilde{X}_r^n)\Delta \tilde{X}^n_r\}$ can be absorbed into the third term to become as $\int_s^t\, \partial_\mu F((r-)\wedge \tau_n,\mu^n_{r-},\tilde{X}_r^n)d(\tilde{X}^n)^c_r$.\\
Combining the above convergence results, the proof of the desired formula is completed.

{\bf{Step 3}} Approximation by cylindrical functions.\\
Now we prove that formula (\ref{itowentzellc1}) holds for $RF-Partially-\mathcal{C}^2$ functions.
First, by Proposition \ref{cylindricalapproximationprop}, for any fixed $(\omega,t)\in\Omega\times[0,T]$, there exist two sequences of "cylindrical functions" $\{G^n\}$ and $\{H^n\}$, such that 
\begin{eqnarray*}&&
(G^n,\partial_{\mu}G^n,\frac{\delta G^n}{\delta \mu},H^n,\partial_{\mu}H^n,\frac{\delta H^n}{\delta \mu})
\to (G,\partial_{\mu}G,\frac{\delta G}{\delta \mu},H,\partial_{\mu}H,\frac{\delta H}{\delta \mu}),\quad \mbox{as}\ n\to\infty
\end{eqnarray*}
holds $(\omega,t,\mu)$-pointwisely.
However, note that here $n$ depends on $t$ and $\omega$. Hence for a fixed $n$, $G^n_{\cdot}(\omega,\mu)$ may not be measurable, which implies that $\int_{0}^{t}G_s^n(\mu)ds$ may not be well-defined. 
The same problem occurs with $\int_{0}^{t}H_s^n(\mu)dY_s$. So $F^n$ is not well-defined, which leads to the inapplicability of Theorem 2.
Therefore, we need some techniques to reform the approximation.
\begin{lemma}\label{approximationlemma}
There exists a series of adapted functions $\{F^n\}_{n\geq 1}$ satisfying 
\begin{eqnarray*}
F^n(t,\mu)=F^n_0(\mu)+\int_{0}^{t}\, G_r^n(\mu)dr+\int_{0}^{t} \, H_r^n(\mu)dY_r,
\end{eqnarray*}
where $F^n_0(\cdot)$ and $\ G_{r}^n(\omega,\cdot),H_{r}^n(\omega,\cdot),\forall (\omega,r)\in\Omega\times[0,t]$ are cylindrical functions, such that, for each fixed $(t,\mu,x)\in[0,T]\times\mathcal{P}_2(\mathbb{R}^d)\times\mathbb{R}^d$, the following convergences hold almost surely
\begin{eqnarray*}&&
\partial F^n(t,\mu)\to \partial F(t,\mu),\ \ \ \ \partial F^n_0(\mu)\to \partial F_0(\mu),\\&&
\int_{0}^{t}\, \partial G_r^n(\mu,x)dr \to \int_{0}^{t}\, \partial G_r(\mu,x)dr,\ \ \ \ \int_{0}^{t}\, \partial H_r^n(\mu,x)dY_r \to \int_{0}^{t}\, \partial H_r(\mu,x)dY_r,
\end{eqnarray*}
where $\partial \phi$ represents all the derivatives of $\phi$ including itself. 
Moreover, $\partial F_0$ is bounded by $\partial F$ almost surely, and $\partial G$, $\partial H$ are $L^2(\Omega\times[0,T])$-convergent, i.e. for any $(t,\mu,x)\in[0,T]\times\mathcal{P}_2(\mathbb{R}^d)\times\mathbb{R}^d$, it holds
\begin{align*}
\mathbb{E}\int_{0}^{t}\, |\partial K_r^n(\mu,x)-\partial K_r(\mu,x)|^2dr\to 0,\ \ \mbox{as}\ n\to\infty,\ \ {\text{with}}\ K:=G,H.
\end{align*}
\end{lemma}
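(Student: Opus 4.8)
The plan is to convert the cylindrical approximation of Proposition~\ref{cylindricalapproximationprop} into one that is \emph{uniform in} $(\omega,r)$, which is precisely what is needed to keep the approximants progressively measurable in time. The key point is that the regularizing operator $T_n^\ast:\mathcal{P}_2(\mathbb{R}^d)\to\mathcal{P}_2(\mathbb{R}^d)$ behind Proposition~\ref{cylindricalapproximationprop} does \emph{not} depend on the function being approximated, and $\mu\mapsto\Psi(T_n^\ast\mu)$ is a $\mathcal{C}^2$-cylindrical function whenever $\Psi\in\mathcal{C}^1(\mathcal{P}_2(\mathbb{R}^d))$ has the regularity and boundedness required there. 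Accordingly I would set
\[
F^n_0(\mu):=F_0(T_n^\ast\mu),\qquad G^n_r(\omega,\mu):=G_r(\omega,T_n^\ast\mu),\qquad H^n_r(\omega,\mu):=H_r(\omega,T_n^\ast\mu),
\]
and then $F^n(t,\mu):=F^n_0(\mu)+\int_0^t G^n_r(\mu)\,dr+\int_0^t H^n_r(\mu)\,dY_r$. Since $T_n^\ast$ acts pointwise on $\mu$ and is the same for every $(\omega,r)$, each of $F^n_0$, $G^n_r(\omega,\cdot)$, $H^n_r(\omega,\cdot)$ is $\mathcal{C}^2$-cylindrical in $\mu$ for fixed $(\omega,r)$ (the $(\omega,r)$-dependence sitting only in the outer finite-dimensional maps, with a common family of test functions coming from $T_n^\ast$). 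This is exactly what repairs the obstruction described before the statement, where an abstractly extracted cylindrical sequence has an index depending on $(\omega,r)$ and hence is not jointly measurable in time.

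Next I would check that $F^n$ is well defined and $\{\hat{\mathscr{F}}_t\}$-adapted. Because $T_n^\ast$ is a fixed continuous deterministic map, $(\omega,r)\mapsto G^n_r(\omega,\mu)$ is $\mathbb{F}$-progressively measurable and $(\omega,r)\mapsto H^n_r(\omega,\mu)$ is $\mathbb{F}$-predictable (compositions of the progressively measurable/predictable $G,H$ with $\mathrm{id}\times T_n^\ast$), and by Assumption 2 they are $\{\hat{\mathscr{F}}_t\}$-adapted. The uniform-bound part of Proposition~\ref{cylindricalapproximationprop} gives $\sup_{\nu}|H^n_r(\omega,\nu)|\le\sup_{\nu}|H_r(\omega,\nu)|$, hence $\int_0^T\sup_{\nu}|H^n_r(\nu)|^2\,dr\le C$ a.s.\ by Definition~\ref{def RF-P-C2}, so $\int_0^t H^n_r(\mu)\,dY_r$ is a genuine stochastic integral and $F^n$ is well defined and adapted; the term $\int_0^t G^n_r(\mu)\,dr$ is handled by the same bound, and $\partial F^n_0$ is dominated, uniformly in $n$, by the bound on $\partial F_0$.

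For the convergences, fix $(t,\mu,x)$ and apply Proposition~\ref{cylindricalapproximationprop} to the single-variable maps $F_0$, $G_r(\omega,\cdot)$, $H_r(\omega,\cdot)$, which satisfy its hypotheses by Definition~\ref{def RF-P-C2}: this yields, for every $(\omega,r)$, the pointwise limits $\partial F^n_0(\mu)\to\partial F_0(\mu)$ and $\partial K^n_r(\omega,\mu,x)\to\partial K_r(\omega,\mu,x)$ for $K=G,H$, together with $|\partial F^n_0(\mu)|\le C$ and $|\partial K^n_r(\omega,\mu,x)|\le\sup_{\nu,y}|\partial K_r(\omega,\nu,y)|$. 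Then $\int_0^t\partial G^n_r(\mu,x)\,dr\to\int_0^t\partial G_r(\mu,x)\,dr$ a.s.\ by dominated convergence, the dominating function $\sup_{\nu,y}|\partial G_r(\nu,y)|$ lying in $L^1([0,t])$ a.s.\ by the $L^2$-bound of Definition~\ref{def RF-P-C2}; the same bound gives $\mathbb{E}\int_0^t|\partial K^n_r(\mu,x)-\partial K_r(\mu,x)|^2\,dr\to0$ for $K=G,H$ by dominated convergence. For $\int_0^t\partial H^n_r(\mu,x)\,dY_r$ I would split $Y=Y_0+N+U$: the $dU$-part is treated by dominated convergence for Lebesgue--Stieltjes integrals and the $dN$-part by It{\^o}'s isometry together with dominated convergence, using $\mathbb{E}\big[[N,N]_T\big]<\infty$ from Assumption~\ref{assumptionX}; this gives $L^2(\Omega)$-convergence, and after passing to a subsequence (relabelled $\{F^n\}$) the convergence is almost sure. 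Finally, writing $\partial F^n(t,\mu)=\partial F^n_0(\mu)+\int_0^t\partial G^n_r(\mu)\,dr+\int_0^t\partial H^n_r(\mu)\,dY_r$ via the Leibniz rule of Lemma~\ref{lemma Leibniz} and combining the previous limits yields $\partial F^n(t,\mu)\to\partial F(t,\mu)$.

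I expect the conceptual crux to be the first step: noticing that Proposition~\ref{cylindricalapproximationprop}'s approximation can be realized through a single, function-independent operator $T_n^\ast$, so that $\mathcal{C}^2$-cylindricality in $\mu$ and $\mathbb{F}$-measurability (progressive measurability / predictability) in $(\omega,r)$ hold simultaneously; once this is in place the rest is standard. The remaining work — routine but somewhat delicate — is matching the dominating functions in the dominated-convergence and It{\^o}-isometry arguments to the exact integrability furnished by Definition~\ref{def RF-P-C2} and Assumption~\ref{assumptionX}, in particular for the $dU$- and $d[N,N]$-integrals, where one settles for $L^2$-convergence and an almost surely convergent subsequence.
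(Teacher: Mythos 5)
Your proposal is correct in its essentials, but it takes a genuinely different and much shorter route than the paper. The paper's proof of Lemma~\ref{approximationlemma} is a multi-stage reduction: for time-continuous $G$ (resp.\ $H$) it first approximates the random field in $L^2(\Omega\times[0,T])$ by \emph{deterministic} functions of $(s,\mu)$ and of finitely many noise coordinates $U_{s\wedge t_i},N_{s\wedge t_i}$ (Lemmas~\ref{appendixlemma1}--\ref{appendixlemma2}), then discretizes in time along a deterministic partition and in the finite-dimensional space variable, applies Proposition~\ref{cylindricalapproximationprop} to each of the resulting countably many deterministic univariate functions of $\mu$, and reassembles so that adaptedness is manifest; merely time-measurable fields are first mollified in time via \eqref{In}. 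You replace all of this by the single observation that the approximation of Proposition~\ref{cylindricalapproximationprop} is realized by composition with one fixed deterministic operator $T_n^{\ast}$, so that $G_r(\omega,T_n^{\ast}\mu)$ is automatically progressively measurable (for fixed $\mu$ it is just $G$ evaluated at the fixed measure $T_n^{\ast}\mu$) while being cylindrical in $\mu$ for each $(\omega,r)$; the convergence, domination and $L^2(\Omega\times[0,T])$ statements then follow as in the paper from $|T_n\varphi|\le\sup|\varphi|$, the integrability bounds of Definition~\ref{def RF-P-C2}, It\^o isometry and a subsequence extraction. This is a legitimate simplification: the paper itself uses exactly $F_0^n(\mu)=F_0(T_n^{\ast}\mu)$ for the deterministic initial datum, and its reassembled approximants are of the form $g_j(t_k^n,T_n^{\ast}\mu,\mathbf{x})$, so the canonical operator is already doing the measurability work there; your approach makes that explicit and dispenses with the noise-coordinate, time- and space-discretization stages. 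The one load-bearing step you should state more carefully is that composition with $T_n^{\ast}$ genuinely yields a $\mathcal{C}^2$-cylindrical function (an outer function of class $C^2$ on $\mathbb{R}^{k_n}$) out of a map that is only $\mathcal{C}^1$ in $\mu$ with an $x$-differentiable derivative, and that the resulting outer function inherits the progressive measurability/predictability of $G$ and $H$; this is precisely what Proposition~\ref{cylindricalapproximationprop} and the construction of \cite{cox2024controlled} provide, so you are no worse off than the paper, but it is the point on which your shortcut (and, implicitly, the paper's longer argument) ultimately rests.
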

From Step 1 we know that the It{\^o}-Wentzell-Lions formula holds for the approximating functions in Lemma \ref{approximationlemma}, 
\begin{align}\label{eq cylindrcal itowentzell}
    F^n(t,\mu_t)-&F^n(s,\mu_s)=\int_{s}^{t}  \,G^n_r(\mu_{r-})dr +\int_{s}^{t}  \,H^n_r(\mu_{r-})dY_r\nonumber\\&
    +\tilde{\mathbb{E}}\bigg[ \underbrace{ \int_{s}^{t}  \,\partial_{\mu}F^n(r-,\mu_{r-},\tilde{X}_{r-}) d\tilde{X}_r}_{J_1}+\underbrace{\frac{1}{2}\partial_x\partial_{\mu}F^n(r-,\mu_{r-},\tilde{X}_{r-}):d[\tilde{X},\tilde{X}]_r^c }_{J_2}\bigg]\nonumber\\&
    +\sum_{s<r\leq t}\left\{F^n(r,\mu_r)-F^n(r-,\mu_{r-})-H^n_r(\mu_{r-})\Delta Y_r\right\}\nonumber\\&
    +\tilde{\mathbb{E}}\bigg[ \underbrace{\sum_{s<r\leq t}\left\{ (\frac{\delta F^n}{\delta\mu}(r-,\mu_{r-},\tilde{X}_r)-\frac{\delta F^n}{\delta\mu}(r-,\mu_{r-},\tilde{X}_{r-}))\mathbf{1}_{\{\mu_r=\mu_{r-}\}}\right\}}_{J_3}\nonumber\\&\qquad
    -\sum_{s<r\leq t}\left\{ \partial_{\mu}F^n(r-,\mu_{r-},\tilde{X}_{r-})\Delta \tilde{X}_r\right\}\bigg],\quad a.s.
    \end{align} 
Thanks to Lemma \ref{approximationlemma}, \eqref{itowentzellc1} can be approximated via \eqref{eq cylindrcal itowentzell}, the convergence of all terms are clear except for $J_1,\ J_2$ and $J_3$. Next, let us explain the convergence of these three terms. 
\\First, we prove that $J_1$ is convergent in the following sense
\begin{align}\label{eq cylindrical approximation 1}
\tilde{\mathbb{E}}\bigg[ \int_s^t\, \partial_{\mu}F^n(r-,\mu_{r-},\tilde{X}_{r-})d\tilde{X}_r \bigg]\to\tilde{\mathbb{E}}\bigg[ \int_s^t\, \partial_{\mu}F(r-,\mu_{r-},\tilde{X}_{r-})d\tilde{X}_r \bigg],\ \ \ a.s.
\end{align}
It follows the expressions of $F^n$ and $F$ that
\begin{align}\label{eq cylindrical approximation decomposition}
&\mathbb{E}\bigg[\Big|\tilde{\mathbb{E}}\Big[ \int_s^t\, (\partial_{\mu}F^n-\partial_\mu F)d\tilde{X}_r \Big]\Big|^2\bigg]\nonumber\\
\leq&\, 3\mathbb{E}\tilde{\mathbb{E}}\bigg[ \Big|\int_s^t\, (\partial_{\mu}F_0^n-\partial_\mu F_0)d\tilde{X}_r \Big|^2+\Big|\int_s^t\, \int_0^r\,(\partial_{\mu}G_u^n-\partial_\mu G_u)dud\tilde{X}_r \Big|^2\nonumber\\
&\qquad+\Big|\int_s^t\, \int_0^r\,(\partial_{\mu}H_u^n-\partial_\mu H_u)dY_ud\tilde{X}_r \Big|^2\bigg],
\end{align}
here and after we omit the variables of the functions for convenience. For the first term on the right side of \eqref{eq cylindrical approximation decomposition}, by the definition of $X$, we have 
\begin{align*}
\mathbb{E}\tilde{\mathbb{E}}\bigg[ \Big|\int_s^t\, (\partial_{\mu}F_0^n-\partial_\mu F_0)d\tilde{X}_r \Big|^2\bigg]\leq 2\mathbb{E}\tilde{\mathbb{E}}\bigg[ \Big|\int_s^t(\partial_{\mu}F_0^n-\partial_\mu F_0)d\tilde{V}_r \Big|^2+\Big|\int_s^t(\partial_{\mu}F_0^n-\partial_\mu F_0)d\tilde{M}_r \Big|^2\bigg].
\end{align*}
Then, applying Cauchy-Schwarz inequality yields
\begin{align*}
\mathbb{E}\tilde{\mathbb{E}}\bigg[ \Big|\int_s^t\, (\partial_{\mu}F_0^n-\partial_\mu F_0)d\tilde{V}_r \Big|^2\bigg]\leq \mathbb{E}\tilde{\mathbb{E}}\bigg[ \int_s^t\, |\partial_{\mu}F_0^n-\partial_\mu F_0|^2d\tilde{V}_r\cdot Var(\tilde{V})_{[s,t]} \bigg],
\end{align*}
and It{\^o} isometry provides
\begin{align*}
\mathbb{E}\tilde{\mathbb{E}}\bigg[ \Big|\int_s^t\, (\partial_{\mu}F_0^n-\partial_\mu F_0)d\tilde{M}_r \Big|^2\bigg]=\mathbb{E}\tilde{\mathbb{E}}\bigg[ \int_s^t\, |\partial_{\mu}F_0^n-\partial_\mu F_0|^2d[\tilde{M},\tilde{M}]_r \bigg].
\end{align*}
Note that $|\partial_\mu F^n_0|\leq |\partial_\mu F_0|\leq C$, and by the dominated convergence theorem, we obtain $$\mathbb{E}\tilde{\mathbb{E}}\bigg[ \Big|\int_s^t\, (\partial_{\mu}F_0^n-\partial_\mu F_0)d\tilde{X}_r\Big|^2\bigg]\to 0, \quad \mbox{as}\ n\to\infty.$$\\
For the second term, still by the definition of $X$, we have
\begin{align*}
\mathbb{E}\tilde{\mathbb{E}}\bigg[\Big|\int_s^t\, \int_0^r\,(\partial_{\mu}G_u^n-\partial_\mu G_u)dud\tilde{X}_r \Big|^2\bigg]\leq &\,2\mathbb{E}\tilde{\mathbb{E}}\bigg[\Big|\int_s^t\, \int_0^r\,(\partial_{\mu}G_u^n-\partial_\mu G_u)dud\tilde{V}_r \Big|^2\\
&\qquad+\Big|\int_s^t\, \int_0^r\,(\partial_{\mu}G_u^n-\partial_\mu G_u)dud\tilde{M}_r \Big|^2\bigg].
\end{align*}
Then by Fubini's Theorem and Cauchy-Schwarz inequality, we obtain
\begin{align*}
\mathbb{E}\tilde{\mathbb{E}}\bigg[\Big|\int_s^t\int_0^r(\partial_{\mu}G_u^n-\partial_\mu G_u)dud\tilde{V}_r \Big|^2\bigg]\leq &\,\tilde{\mathbb{E}}\bigg[\int_s^t\mathbb{E}\Big|\int_0^r(\partial_{\mu}G_u^n-\partial_\mu G_u)du\Big|^2d\tilde{V}_r\cdot Var(\tilde{V})_{[s,t]}\bigg]\\
\leq &\,\tilde{\mathbb{E}}\bigg[\int_s^t\mathbb{E}\Big[\int_0^r|\partial_{\mu}G_u^n-\partial_\mu G_u|^2du\Big]rd\tilde{V}_r\cdot Var(\tilde{V})_{[s,t]}\bigg]
\end{align*}
and by It{\^o} isometry, we have
\begin{align*}
\mathbb{E}\tilde{\mathbb{E}}\bigg[\Big|\int_s^t\, \int_0^r\,(\partial_{\mu}G_u^n-\partial_\mu G_u)dud\tilde{M}_r \Big|^2\bigg]=&\tilde{\mathbb{E}}\bigg[\int_s^t\,\mathbb{E}\Big|\int_0^r\, (\partial_{\mu}G_u^n-\partial_\mu G_u)du\bigg|^2d[\tilde{M},\tilde{M}]_r\bigg]\\
\leq &\tilde{\mathbb{E}}\bigg[\int_s^t\,\mathbb{E}\Big[\int_0^r |\partial_{\mu}G_u^n-\partial_\mu G_u|^2du\Big]rd[\tilde{M},\tilde{M}]_r\bigg].
\end{align*}
Note that $\mathbb{E}[\int_0^t\,|\partial_\mu G_u(\mu,x)-\partial_\mu G^n_u(\mu,x)|^2du]\to 0, \forall x\in\mathbb{R}^d,\mu\in\mathcal{P}_2(\mathbb{R}^d)$, and by the dominated convergence theorem, we get
$$\mathbb{E}\tilde{\mathbb{E}}\bigg[\Big|\int_s^t\, \int_0^r\,(\partial_{\mu}G_u^n-\partial_\mu G_u)dud\tilde{X}_r \Big|^2\bigg]\rightarrow0,\quad \mbox{as}\ n\rightarrow\infty.$$ 
\\For the third term, similarly, we have
\begin{align*}
\mathbb{E}\tilde{\mathbb{E}}\bigg[\Big|\int_s^t\int_0^r(\partial_{\mu}H_u^n-\partial_\mu H_u)dY_ud\tilde{X}_r \Big|^2\bigg]\leq &\,2\mathbb{E}\tilde{\mathbb{E}}\bigg[\Big|\int_s^t\int_0^r(\partial_{\mu}H_u^n-\partial_\mu H_u)dY_ud\tilde{V}_r\Big|^2\\
&\qquad+\Big|\int_s^t\int_0^r(\partial_{\mu}H_u^n-\partial_\mu H_u)dY_ud\tilde{M}_r\Big|^2\bigg].
\end{align*}
Still by Fubini's Theorem and Cauchy-Schwarz inequality, it holds
\begin{align*}
\mathbb{E}\tilde{\mathbb{E}}\bigg[\Big|\int_s^t\int_0^r(\partial_{\mu}H_u^n-\partial_\mu H_u)dY_ud\tilde{V}_r \Big|^2\bigg]\leq \tilde{\mathbb{E}}\bigg[\int_s^t\,\mathbb{E}\Big|\int_0^r(\partial_{\mu}H_u^n-\partial_\mu H_u)dY_u\Big|^2d\tilde{V}_r\cdot Var(\tilde{V})_{[s,t]}\bigg].
\end{align*}
It follows Lemma 1 that, for any $x\in\mathbb{R}^d,\mu\in\mathcal{P}_2(\mathbb{R}^d)$,
\begin{align*}
\mathbb{E}\bigg[\Big|\int_0^r(\partial_{\mu}H_u^n(\mu,x)-\partial_\mu H_u(\mu,x))dY_u\Big|^2 \bigg]\to 0.
\end{align*}
Then, we obtain due to the dominated convergence theorem that
\begin{align*}
\mathbb{E}\tilde{\mathbb{E}}\bigg[\Big|\int_s^t\int_0^r(\partial_{\mu}H_u^n-\partial_\mu H_u)dY_ud\tilde{V}_r \Big|^2\bigg]\to 0.
\end{align*}
Again, by It{\^o} isometry and the dominated convergence theorem, we have
\begin{align*}
\mathbb{E}\tilde{\mathbb{E}}\bigg[\Big|\int_s^t\int_0^r(\partial_{\mu}H_u^n-\partial_\mu H_u)dY_ud\tilde{M}_r \Big|^2\bigg]=&\,\tilde{\mathbb{E}}\bigg[\int_s^t\mathbb{E}\Big|\int_0^r(\partial_{\mu}H_u^n-\partial_\mu H_u)dY_u\Big|^2d[\tilde{M},\tilde{M}]_r\bigg]\\
\rightarrow&\, 0, \ \ \ {\text {as}} \ \ n\to\infty.
\end{align*}
Combining the above convergence results with \eqref{eq cylindrical approximation decomposition}, we get \eqref{eq cylindrical approximation 1} by taking a subsequence. 
\\The convergence of $J_2$ can be obtained by using the same method as $J_1$.

Finally, we prove the convergence of $J_3$, i.e.
\begin{eqnarray*}&&
\tilde{\mathbb{E}}\sum_{s<r\leq t}\left\{ (\frac{\delta F^n}{\delta\mu}(r-,\mu_{r-},\tilde{X}_r)-\frac{\delta F^n}{\delta\mu}(r-,\mu_{r-},\tilde{X}_{r-}))\mathbf{1}_{\{\mu_r=\mu_{r-}\}}\right\}\\&&
\to \tilde{\mathbb{E}}\sum_{s<r\leq t}\left\{ (\frac{\delta F}{\delta\mu}(r-,\mu_{r-},\tilde{X}_r)-\frac{\delta F}{\delta\mu}(r-,\mu_{r-},\tilde{X}_{r-}))\mathbf{1}_{\{\mu_r=\mu_{r-}\}}\right\},\ \ \ a.s.
\end{eqnarray*}
We know from Remark 3.2 in \cite{guo2023ito} that
\begin{align*}
&\sum_{s<r\leq t}\Big(\frac{\delta F^n}{\delta\mu}(r-,\mu_{r-},\tilde{X}_r)-\frac{\delta F^n}{\delta\mu}(r-,\mu_{r-},\tilde{X}_{r-})\Big)\mathbf{1}_{\{\mu_r=\mu_{r-}\}}\\
=&\sum_{s<r\leq t}\int_0^1\, \partial_\mu F^n(r-,\mu_{r-},X_{r-}+h\Delta X_r)\cdot \Delta Xdh\\
=&\int_s^t\,\int_0^1\, \partial_\mu F^n(r-,\mu_{r-},X_{r-}+h\Delta X_r)dhd(V-V^c)_r.
\end{align*}
Then by the same method as $J_1$, the desired result can be deduced.$\hspace{3.8cm}\Box$\\

Before proving Lemma \ref{approximationlemma}, we introduce the following space.
\begin{definition}
A function $g:[0,T]\times\mathcal{P}_2(\mathbb{R}^d)\times(\mathbb{R}^d)^{2n}\to\mathbb{R}$ is said to belong to $\mathcal{C}_0^{0,1,\infty}([0,T]\times\mathcal{P}_2(\mathbb{R}^d)\times(\mathbb{R}^d)^{2n})$ if\\
(i) $g(\cdot,\mu,x)$ is continuous w.r.t. $t$ for any $(\mu,x)$;\\
(ii) $g(t,\mu,\cdot)$ is $C_0^{\infty}$ w.r.t $x$ for any $(t,\mu)$;\\
(iii) $g(t,\cdot,x)$ is $\mathcal{C}^1$-differentiable w.r.t $\mu$ for any $(t,x)$.
\end{definition}

\vspace{3mm}
{\textit{Proof of Lemma \ref{approximationlemma}.}}
First, since $F_0(\mu)$ is deterministic, it is clear that there exists a series of $\mathcal{C}^2$ cylindrical functions $\{F^n_0(\mu) \}_{n=1}^{\infty}$ such that $\partial F^n_0(\mu)\to \partial F_0(\mu)$. Note that 
\begin{align*}
F^n_0(\mu)=F_0(T^*_n\mu),\ \ \partial F_0^n(\mu,x)=T_n(\partial F_0(T^*_n\mu,\cdot))(x),
\end{align*}
where $T_n$ is defined in the proof of Proposition 3. Therefore,
\begin{align*}
|F^n_0(\mu)|\leq |F_0(\mu)|,\ \ |\partial_\mu F_0^n(\mu,x)|\leq \sup_{y\in\mathbb{R}^d}|\partial F_0(\mu,y)|.
\end{align*}
Then, to prove $\int_{0}^{t}\, \partial G_r^n(\mu)dr \to \int_{0}^{t}\, \partial G_r(\mu)dr$, we take the approximation in following manners:\\
First, we assume that the function $G_\cdot$ is continuous w.r.t time.
\begin{enumerate}
    \item Take $\{t_i \}_{i=1}^{\infty}$ a dense subset of $[0,T]$, by Lemma \ref{appendixlemma2} in Appendix A, there exists a series of deterministic functions $g_j\in \mathcal{C}_0^{0,1,\infty}([0,T]\times\mathcal{P}_2(\mathbb{R}^d)\times(\mathbb{R}^d)^{2j}),j=1,2,\ldots$ such that, for any $\mu$,
    \begin{align}\label{L2convengence}
    \lim_{j\to\infty}\mathbb{E}\bigg[ \int_0^T\, |G_s(\mu)-g_j(s,\mu,U_{s\wedge t_1},\ldots,U_{s\wedge t_j},N_{s\wedge t_1},\ldots,N_{s\wedge t_j})|^2ds \bigg]=0.
    \end{align}
    \item Let $\Pi^n:=\{0=t^n_0<t^n_1<\cdots<t^n_n=T\}$ be a partition of $[0,T]$ such that $\max_{k}\left\lvert t_{k+1}^n-t^n_k\right\rvert\rightarrow0$ and $\max_k \sup_{r\in (t_k^n,t_{k+1}^n]}\left\lvert Y_{r-}-Y_{t^n_{k}}\right\rvert\rightarrow0$ as $n\rightarrow\infty$. 
    For any given $j$, we consider a time-simple function $\sum_{k=0}^{n-1} g_j(t_k^n,\mu,x_1,\ldots,x_{2j})\mathbf{1}_{[t^n_k,t^n_{k+1})}$.
    \item For any fixed $(t^n_k,\mu)$, $g_j(t^n_k,\mu,\cdot)\in C_0^{\infty}((\mathbb{R}^d)^{2j})$, hence it can be approximated by the space-simple functions $\sum_{l=1}^{m} g_j(t^n_k,\mu,\mathbf{x}_{A^m_{l}})\mathbf{1}_{A^m_l}$, where $\{A^m_l\}_{l=1}^{m}$ is a partition of $(\mathbb{R}^d)^{2j}$ 
    such that $\max_{l}|A^m_l|\to 0$ as $m\to\infty$, and $\mathbf{x}_{A^m_{l}}\in A^m_{l}$. That is
    \begin{align*}
    \lim_{m\to\infty}\sum_{l=1}^{m} g_j(t^n_k,\mu,\mathbf{x}_{A^m_{l}})\mathbf{1}_{A^m_l}(\mathbf{x})=g_j(t^n_k,\mu,\mathbf{x}).
    \end{align*}
    Then, by the definition of limit, we can take $m_n$ for each $n$ such that
    \begin{align*}
    \sup_{\mathbf{x}\in(\mathbb{R}^d)^{2n}}|\sum_{l=1}^{m_n}g_j(t^n_k,\mu,\mathbf{x}_{A^{m_n}_l})\mathbf{1}_{A^{m_n}_l}(\mathbf{x})-g_j(t^n_k,\mu,\mathbf{x})|<\frac{1}{n}.
    \end{align*}
    \item Next by Proposition \ref{cylindricalapproximationprop}, there exists a cylindrical function $g_j^{(n)}(t^n_k,\mu,\mathbf{x}_{A^{m_n}_{l}})$ satisfying
    \begin{align*}
    |g_j^{(n)}(t^n_k,\mu,\mathbf{x}_{A^{m_n}_{l}})-g_j(t^n_k,\mu,\mathbf{x}_{A^{m_n}_{l}})|<\frac{1}{n}.
    \end{align*}
    Set $g^{(n)}_j(t^n_k,\mu,\mathbf{x} ):=\sum_{l=1}^{\infty}g_j^{(n)}(t^n_k,\mu,\mathbf{x}_{A^{m_n}_{l}})\mathbf{1}_{A^{m_n}_l}(\mathbf{x})$. It is clear that $g^{(n)}_j(t^n_k,\mu,\cdot)$ is a measurable function for any $\mu$.
    \item Finally, for any $(\omega,s)\in\Omega\times[0,T]$, the approximating functions of $G$ are defined as follows
    \begin{align}\label{eq approximate G finally}
    G^{j,n}_s(\omega,\mu):=\sum_{k=0}^{n-1}g^{(n)}_j(t^n_k,\mu,U_{s\wedge t_1},\ldots,U_{s\wedge t_j},N_{s\wedge t_1},\ldots,N_{s\wedge t_j})\mathbf{1}_{[t^n_k,t^n_{k+1})}(s).\end{align}
    From the above argument we know that $\{G^{j,n}\}$ is a series of $\mathbb{F}$-adapted functions. Moreover, $\{G^{j,n}_s(\omega,\cdot)\}$ is  a series of $\mathcal{C}^1(\mathcal{P}_2(\mathbb{R}^d))$-cylindrical functions given any $(\omega,s)\in\Omega\times [0,T]$.
\end{enumerate}
Now we verify the convergence of $\lim_{j}\lim_{n}\int_0^t\,\partial G_r^{j,n}(\mu)dr$. We only show the convergence of $G$ itself and those of its derivatives follow the same argument. A direct calculus yields
\begin{align}\label{approximatedecomposition}
&\mathbb{E}\bigg[ \int_0^t\, |G^{j,n}_r(\mu)-G_r(\mu)|^2dr \bigg]\nonumber\\
\leq &\,C \mathbb{E}\bigg[ \int_0^t\,|G^{j,n}_r(\mu)-\sum_{k=0}^{n-1}G_{t^n_{k}}(\mu)\mathbf{1}_{[t^n_k,t^n_{k+1})}(r)|^2dr \bigg]\nonumber\\&+C\mathbb{E}\bigg[ \int_0^t\,|\sum_{k=0}^{n-1}G_{t^n_{k}}(\mu)\mathbf{1}_{[t^n_k,t^n_{k+1})}(r)-G_r(\mu)|^2dr \bigg]\nonumber\\
\leq &\,C \mathbb{E}\bigg[ \int_0^t\,|\sum_{k=0}^{n-1}g^{(n)}_j(t^n_k,\mu,\mathbf{UN}_r^j)\mathbf{1}_{[t^n_k,t^n_{k+1})}(r)-\sum_{k=0}^{n-1}g_j(t^n_k,\mu,\mathbf{UN}_r^j)\mathbf{1}_{[t^n_k,t^n_{k+1})}(r)|^2dr \bigg]\nonumber\\
&+C\mathbb{E}\bigg[ \int_0^t\, |\sum_{k=0}^{n-1}g_j(t^n_k,\mu,\mathbf{UN}_r^j)\mathbf{1}_{[t^n_k,t^n_{k+1})}(r)-\sum_{k=0}^{n-1}G_{t^n_{k}}(\mu)\mathbf{1}_{[t^n_k,t^n_{k+1})}(r)|^2dr \bigg]\nonumber\\
&+C\mathbb{E}\bigg[ \int_0^t\,|\sum_{k=0}^{n-1}G_{t^n_{k}}(\mu)\mathbf{1}_{[t^n_k,t^n_{k+1})}(r)-G_r(\mu)|^2dr \bigg],
\end{align}
where $\mathbf{UN}_r^j:=(U_{r\wedge t_1},\ldots,U_{r\wedge t_j},N_{r\wedge t_1},\ldots,N_{r\wedge t_j})$.\\
Then for the first term on the right hand side of \eqref{approximatedecomposition}, it holds
\begin{align*}
&\mathbb{E}\bigg[ \int_0^t\,|\sum_{k=0}^{n-1}g^{(n)}_j(t^n_k,\mu,\mathbf{UN}_r^j)\mathbf{1}_{[t^n_k,t^n_{k+1})}(r)-\sum_{k=0}^{n-1}g_j(t^n_k,\mu,\mathbf{UN}_r^j)\mathbf{1}_{[t^n_k,t^n_{k+1})}(r)|^2dr \bigg]\\
\leq &\, n\mathbb{E}\bigg[ \sum_{k=0}^{n-1}\int_{t^n_k}^{t^n_{k+1}}|g^{(n)}_j(t^n_k,\mu,\mathbf{UN}_r^j)-g_j(t^n_k,\mu,\mathbf{UN}_r^j)|^2dr \bigg]<n\frac{4t}{n^2}\rightarrow0,\ \ \mbox{as}\ n\to\infty.
\end{align*} 
For the second term, by the time-continuity of $G$ and $g_j$, we have
\begin{align*}
&\mathbb{E}\bigg[ \int_0^t\, |\sum_{k=0}^{n-1}g_j(t^n_k,\mu,\mathbf{UN}_r^j)\mathbf{1}_{[t^n_k,t^n_{k+1})}(r)-\sum_{k=0}^{n-1}G_{t^n_{k}}(\mu)\mathbf{1}_{[t^n_k,t^n_{k+1})}(r)|^2dr \bigg]\\
\to &\, \mathbb{E}\bigg[ \int_0^t\,|g_j(r,\mu,\mathbf{UN}^j_r)-G_r(\mu)|^2dr \bigg], \  \mbox{ as } n\rightarrow\infty.
\end{align*}
Furthermore, due to \eqref{L2convengence}, the above term converges to $0$ as $j\to\infty$.\\
For the last term, still by the time-continuity of $G$, one gets 
\begin{align*}
\mathbb{E}\bigg[ \int_0^t\,|\sum_{k=0}^{n-1}G_{t^n_{k}}(\mu)\mathbf{1}_{(t^n_k,t^n_{k+1}]}(r)-G_r(\mu)|^2dr \bigg]\to 0,\  \mbox{ as } n\to\infty.
\end{align*}

Hence, until now, we prove the convergence of $\int_0^tG^{j,n}_r(\mu)dr$ in $L^2(\Omega)$. Then we can take a subsequence and still denote it as $\{G^{j,n} \}_{j,n=1}^{\infty}$ such that 
\begin{align*}
\int_0^t\, G^{j,n}_r(\mu)dr\to \int_0^t\, G_r(\mu)dr,\ \ \ a.s.
\end{align*}
If $G$ is only time-measurable, we need to seek for a series of time-continuous and $\mathbb{F}$-adapted processes $\{I_m(\mu)\}$ such that 
\begin{align*}
\int_0^t\, I_m(r,\mu)dr\to \int_0^t\, G_r(\mu)dr,\ \ \ a.s.
\end{align*}
Define
\begin{align}\label{In}
I_m(t,\mu):=\int_0^t\psi_m(s-t)G_s(\mu)ds,
\end{align}
where $\psi_m$ is a non-negative continuous function on $\mathbb{R}$ such that 
\begin{align*}
\psi_m(x)=0, \ {\text{for}}\ x\leq-\frac{1}{m} \ {\text{and}}\ x\geq0;\ \ \int_{-\infty}^{\infty}\psi_m(x)dx=1.
\end{align*}
Since $G$ is $\mathbb{F}$-adapted, it is clear that $I_m(t,\mu)$ is $\mathscr{F}_t$-measurable.
Due to the boundedness of $G$, it is easy to check that $\int_0^tI_m(s,\mu)ds\to\int_0^tG_s(\mu)ds$ in $L^2$. Then, we can take a subsequence which is still denoted as $\{\int_0^tI_m(s,\mu)ds\}_m$ such that $\int_0^tI_m(s,\mu)ds\to\int_0^tG_s(\mu)ds$, a.s.
It is known from \eqref{In} that $I_m$ has the same differentiability w.r.t. $\mu$ as $G$. Hence we can construct the cylindrical functions $\{I_m^{j,n}\}$ to approximate $I_m$ by using the same method as in deriving \eqref{eq approximate G finally}. Then, let $m,n,j\to\infty$, it is easy to see that we have
\begin{align*}
\lim_{m,n,j\to\infty}\mathbb{E}\bigg[ \int_0^t\Big|I^{j,n}_m(r,\mu)-G_r(\mu)\Big|^2dr \bigg]= 0.
\end{align*}
To prove $\int_{0}^{t}\partial H_r^n(\mu)dY_r \to \int_{0}^{t}\partial H_r(\mu)dY_r$, when $H$ is continuous w.r.t. time, we use the same argument as before to get 
\begin{align*}
\mathbb{E}&\bigg[ \int_0^t\Big|H^{j,n}_r(\mu)-H_r(\mu)\Big|^2dr \bigg]\to 0,\ \ \ j,n\to \infty.
\end{align*}
Thus by It{\^o} isometry, we obtain
\begin{align*}
\lim_{j,n\to\infty}\mathbb{E}\bigg[ \bigg| \int_0^t\,H^{j,n}_r(\mu)dN_r- \int_0^t\,H_r(\mu)dN_r\bigg|^2 \bigg]= 0.
\end{align*}
Then, we can take a subsequence and still denote as $\{H^{j,n}\}_{j,n=1}^{\infty}$ such that \
\begin{align*}
\lim_{j,n\to\infty}\int_0^t\,H^{j,n}_r(\mu)dN_r=\int_0^t\,H_r(\mu)dN_r,\ \ \lim_{j,n\to\infty}\int_0^t\,H^{j,n}_r(\mu)dU_r=\int_0^t\,H_r(\mu)dU_r,\ a.s.
\end{align*}
If $H$ is only time-measurable, we extend the methodology used for $G$ to derive the convergence results for $H$. Moreover, by the definition of $H^{j,n}$, it is clear that its derivatives are $\mathbb{F}-$predictable.\\
Therefore, the lemma is proved.  $\hspace{9.5cm}\Box$

\section{It{\^o}-Wentzell-Lions formula for flows of conditional measures}{\label{c2}}

Motivated by the mean-field game problems with common noise, we shall establish the It{\^o}-Wentzell-Lions formula for flows of conditional measures. Let $X$ and $Y$ be respectively $\mathbb{R}^d-$ and $\mathbb{R}^l$-valued semimartingales on the complete filtered probability space $(\Omega,\mathscr{F},\mathbb{F}=(\mathscr{F}_t)_{t\in [0,T]},\mathbb{P})$, where $\mathscr{F}\supset \mathscr{F}_T$ is rich enough. Suppose $\mathbb{G}=(\mathscr{G}_t)_{t\in[0,T]}$ is a sub-filtration of $\mathbb{F}$. In this section, we set for any $t\in[0,T]$,  $\mu_t:=\mathbb{P}_{X_t|\mathscr{G}_t}$ the conditional probability measure given $\mathscr{G}_t$ and develop the It{\^o}-Wentzell-Lions formula for $F:\Omega\times[0,T]\times\mathcal{P}_2(\mathbb{R}^d)\rightarrow\mathbb{R}$ defined in \eqref{decompositionFmu} with stricter regularity conditions than those in Section 3.
\begin{definition}
$F$ is said to be $RF-Fully-\mathcal{C}^2$ if $F$ is $RF-Partially-\mathcal{C}^2$, and moreover, it is twice differentiable w.r.t $\mu$ with a continuous mapping $(t,\mu,x_1,x_2)\to \partial_{\mu\mu}F(t,\mu,x_1,x_2)$ satisfying 
\begin{eqnarray*}&&
\sup_{\mu\in \mathcal{P}_2(\mathbb{R}^d),x_1,x_2\in \mathbb{R}^d}\big\lvert\partial_{\mu\mu}F_0(\mu,x_1,x_2)\big\rvert  \leq C,\\&&
 \int_0^T\,\sup_{\mu\in \mathcal{P}_2(\mathbb{R}^d),x_1,x_2\in \mathbb{R}^d}\big\lvert\partial_{\mu\mu}K(t,\mu,x_1,x_2)\big\rvert dt \leq C,\quad a.s.,
\end{eqnarray*}
with $K:=G,H$.
\end{definition}
\begin{assumption}\label{filtrationassumption}
The sub-filtration $\mathbb{G}\subset \mathbb{F}$ satisfies the compatibility assumption (or the conditional independence condition). That is, for any $t\in[0,T]$, 
$\mathscr{F}_t$ and $\mathscr{G}_T$ are independent given $\mathscr{G}_t$, denoted as $\mathscr{F}_t\perp \mathscr{G}_T|\mathscr{G}_t$.
\end{assumption}
It is easy to see that Assumption \ref{filtrationassumption} provides that, for $\mathbb{F}$-adapted process $X$, $\mathbb{P}_{X_t|\mathscr{G}_t}=\mathbb{P}_{X_t|\mathscr{G}_T}\ a.s.$  $\forall t\in[0,T]$. So we denote $\mu_t:=\mathbb{P}_{X_t|\mathscr{G}_t}=\mathbb{P}_{X_t|\mathscr{G}_T}$ throughout this section.
\\Next, we recall the definition and some useful propositions of conditionally independent copies. For the details, the readers are refereed to \cite{guo2024ito} (Section 3.3). For a given probability space $(\Omega,\mathscr{F},\mathbb{P})$, we define $\overline{\Omega}:=\Omega^{n+1}$ as 
\begin{eqnarray*}
\overline{\Omega}:=\Omega^{n+1}:=\{ (\omega_0,\omega_1,\cdots,\omega_n)|\omega_i\in\Omega,i=0,\cdots,n \},
\end{eqnarray*}
with extended $\sigma$-algebra 
\begin{eqnarray*}
\overline{\mathscr{F}}:=\sigma\{ A_0\times A_1 \times \cdots\times A_n|A_i\in \mathscr{F},i=0,\cdots,n \} .
\end{eqnarray*}
To ensure that the copies are conditionally independent given a sub-$\sigma$-algebra $\mathscr{G}\subset \mathscr{F}$, we define a probability measure on $\overline{\Omega}$ as
\begin{eqnarray*}
\overline{\mathbb{P}}(A_0\times A_1 \times \cdots\times A_n):= \mathbb{E}\bigg[ \mathbf{1}_{A_0}\prod_{i=1}^n\mathbb{P}(A_i|\mathscr{G})  \bigg],\quad A_i\in \mathscr{F}.
\end{eqnarray*}
It is clear that $\overline{\mathbb{P}}$ is well-defined.
Moreover, define $\overline{\mathbb{F}}:=(\overline{\mathscr{F}_t})_{t\in[0,T]}$ by
\begin{align*}
\overline{\mathscr{F}}_t:=\sigma\{ A_0\times A_1 \times \cdots\times A_n|A_i\in \mathscr{F}_t,i=0,\cdots,n \} .
\end{align*}
The following proposition holds for any $n\in\mathbb{N}_+$, but we only use it for $n=2$.
\begin{proposition}\label{conditionalcopyprop}(\cite{guo2024ito}, Corollary 3.2)
Given $\mathbb{F}-$adapted semimartingales $X$ and $Y$ on a filtered probability space $(\Omega,\mathscr{F},\mathbb{F}=(\mathscr{F}_t)_{t\in [0,T]},\mathbb{P})$, and a sub $\sigma$-algebra $\mathscr{G}\subset\mathscr{F}$, 
there exists a unique enlarged probability space $(\overline{\Omega},\overline{\mathscr{F}},\overline{\mathbb{F}}=(\overline{\mathscr{F}}_t)_{t\in [0,T]},\overline{\mathbb{P}})$ defined as the above.  Define two conditional copies 
$(X',Y')$ and $(X'',Y'')$ of $(X,Y)$ by
\begin{eqnarray*}
(X',Y')(\omega_0,\omega_1,\omega_2)=(X,Y)(\omega_1),\quad (X'',Y'')(\omega_0,\omega_1,\omega_2)=(X,Y)(\omega_2),
\end{eqnarray*}
then, for any $t\in[0,T]$, it holds almost surely that
\begin{align}\label{conditionalcopy}
\mathbb{P}_{X_t,Y_t|\mathscr{G}}=\mathbb{P}_{X_t',Y_t'|\mathscr{G}}=\mathbb{P}_{X_t',Y_t'|\mathscr{F}}=\mathbb{P}_{X_t'',Y_t''|\mathscr{G}}=\mathbb{P}_{X_t'',Y_t''|\mathscr{F}}.
\end{align}
Moreover, $(X,Y),\ (X',Y')$ and $(X'',Y'')$ are conditionally independent given $\mathscr{G}$, and are also conditionally independent given $\mathscr{F}$.\\
Furthermore, if the sub-$\sigma$-algebra $\mathscr{G}=\mathscr{G}_T$ from a sub filtration $\mathbb{G}=(\mathscr{G}_t)_{t\in[0,T]}\subset\mathbb{F}$ which satisfies Assumption 3, the proposition still holds.
\end{proposition}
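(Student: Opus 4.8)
The plan is to realize $\overline{\mathbb{P}}$ as the iterated product of $\mathbb{P}$ with $n$ copies of the regular conditional probability $\mathbb{P}(\cdot\mid\mathscr{G})$ and to read every claim off that representation; throughout, $\mathscr{F}$ and $\mathscr{G}$ are regarded on $\overline{\Omega}$ as the $\sigma$-algebras generated by the first coordinate $\omega_0$, and $(X,Y)$ is identified with its first-coordinate copy. Existence of $\overline{\mathbb{P}}$ is the well-definedness already noted (the defining set function is countably additive on the $\pi$-system of measurable rectangles, by dominated convergence inside the expectation, hence extends to $\overline{\mathscr{F}}$); uniqueness is immediate because $\overline{\mathbb{P}}$ is prescribed on a generating $\pi$-system. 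The one structural consequence I need is obtained by conditioning the defining formula on $\mathscr{G}$: for $A_i\in\mathscr{F}$, $\overline{\mathbb{E}}\big[\mathbf{1}_{A_0}(\omega_0)\prod_{i=1}^n\mathbf{1}_{A_i}(\omega_i)\mid\mathscr{G}\big]=\prod_{i=0}^n\mathbb{P}(A_i\mid\mathscr{G})$, so that conditionally on $\mathscr{G}$ the coordinates $\omega_0,\dots,\omega_n$ are i.i.d.\ with common law $\mathbb{P}(\cdot\mid\mathscr{G})(\omega_0)$; in particular each has marginal $\mathbb{P}$, all of them share the same $\mathscr{G}$-content $\overline{\mathbb{P}}$-a.s., and, since $\mathbb{P}(\cdot\mid\mathscr{G})(\omega_0)$ depends on $\omega_0$ only through $\mathscr{G}$, conditionally on the first coordinate the remaining ones are still i.i.d.\ with that law.

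From this, \eqref{conditionalcopy} is a short computation. For bounded measurable $\phi$ on $\mathbb{R}^d\times\mathbb{R}^l$, $\overline{\mathbb{E}}[\phi(X'_t,Y'_t)\mid\mathscr{F}]=\int\phi\big((X_t,Y_t)(\omega_1)\big)\,\mathbb{P}(d\omega_1\mid\mathscr{G})(\omega_0)=\mathbb{E}[\phi(X_t,Y_t)\mid\mathscr{G}](\omega_0)$, whose last expression is $\mathscr{G}$-measurable, so by the tower property it also equals $\overline{\mathbb{E}}[\phi(X'_t,Y'_t)\mid\mathscr{G}]$; and $\overline{\mathbb{E}}[\phi(X_t,Y_t)\mid\mathscr{G}]=\mathbb{E}[\phi(X_t,Y_t)\mid\mathscr{G}]$ because the first coordinate is a faithful copy of $(\Omega,\mathscr{F},\mathbb{P})$. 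The same argument on $\omega_2$ covers $(X'',Y'')$, giving the whole chain. For the conditional independence I would test the joint law against products of bounded path functionals $\Phi_0,\Phi_1,\Phi_2$ of the three copies: conditionally on $\mathscr{G}$, the i.i.d.\ structure gives $\overline{\mathbb{E}}[\Phi_0\Phi_1\Phi_2\mid\mathscr{G}]=\prod_{i}\mathbb{E}[\Phi_i\mid\mathscr{G}](\omega_0)$, which is the factorization; conditionally on $\mathscr{F}$, the factor $\Phi_0((X,Y)(\omega_0))$ is $\mathscr{F}$-measurable and $\omega_1,\omega_2$ are independent given $\omega_0$, so $\overline{\mathbb{E}}[\Phi_0\Phi_1\Phi_2\mid\mathscr{F}]=\Phi_0\,\overline{\mathbb{E}}[\Phi_1\mid\mathscr{F}]\,\overline{\mathbb{E}}[\Phi_2\mid\mathscr{F}]$.

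Finally, nothing above used any structure of $\mathscr{G}$, so taking $\mathscr{G}:=\mathscr{G}_T$ reproduces the existence, uniqueness, \eqref{conditionalcopy} and the conditional independence verbatim. The substantive content of the last sentence --- the part actually used in Section \ref{c2} --- is that under Assumption \ref{filtrationassumption} the enlarged filtered space again carries the compatibility structure: with $\overline{\mathscr{G}}_t:=\sigma\{A_0\times A_1\times A_2:A_i\in\mathscr{G}_t\}$ one has $\overline{\mathscr{F}}_t\perp\overline{\mathscr{G}}_T\mid\overline{\mathscr{G}}_t$, and the copies $(X',Y'),(X'',Y'')$ remain $\overline{\mathbb{F}}$-semimartingales with the same canonical decompositions as $(X,Y)$. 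I expect this to be the main obstacle. Its proof combines the conditional-i.i.d.\ structure of the coordinates (so that all three share the same $\mathscr{G}_T$-content a.s.\ and $\sigma(\omega_1)\perp\sigma(\omega_0,\omega_2)\mid\mathscr{G}_T$) with the compatibility $\mathscr{F}_t\perp\mathscr{G}_T\mid\mathscr{G}_t$ on $\Omega$, which allows one to replace $\mathbb{E}[\,\cdot\mid\mathscr{G}_T]$ by $\mathbb{E}[\,\cdot\mid\mathscr{G}_t]$ for $\mathscr{F}_t$-measurable integrands when passing from $\overline{\mathscr{G}}_T$ down to $\overline{\mathscr{G}}_t$; this yields $\overline{\mathscr{F}}_t\perp\overline{\mathscr{G}}_T\mid\overline{\mathscr{G}}_t$, and then the standard enlargement-of-filtration argument under this immersion-type condition preserves the continuous-local-martingale and finite-variation parts under $\overline{\mathbb{F}}$. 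Everything else is routine monotone-class and tower-property bookkeeping once the kernel picture is in place.
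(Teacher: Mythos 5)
The paper does not prove Proposition \ref{conditionalcopyprop}: it is imported verbatim from \cite{guo2024ito} (Corollary 3.2), and no in-paper argument exists to compare yours against. Judged on its own, your proof is the standard one and is essentially correct: representing $\overline{\mathbb{P}}$ as $\mathbb{P}(d\omega_0)\otimes K(\omega_0,d\omega_1)\otimes\cdots\otimes K(\omega_0,d\omega_n)$ with $K=\mathbb{P}(\cdot\mid\mathscr{G})$, deriving the conditional i.i.d.\ structure of the coordinates given $\mathscr{G}$, and then reading off \eqref{conditionalcopy} and both conditional-independence claims by testing against products of bounded functionals is exactly how this result is established; your computations (e.g.\ $\overline{\mathbb{E}}[\phi(X'_t,Y'_t)\mid\mathscr{F}]=\mathbb{E}[\phi(X_t,Y_t)\mid\mathscr{G}]$, and the factorizations given $\mathscr{G}$ and given $\mathscr{F}$) all check out. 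Two remarks. First, your existence argument tacitly requires a regular version of $\mathbb{P}(\cdot\mid\mathscr{G})$ to make the kernel picture and the extension to $\overline{\mathscr{F}}$ rigorous, which needs $(\Omega,\mathscr{F})$ to be standard Borel or similar; the paper hides the same issue behind ``It is clear that $\overline{\mathbb{P}}$ is well-defined'', so this is a shared, minor omission rather than a gap specific to you, but it deserves a sentence. Second, your final paragraph overshoots the statement: the ``furthermore'' clause only asserts that the proposition survives the substitution $\mathscr{G}=\mathscr{G}_T$, which, as you correctly note, is immediate since nothing in the argument uses any structure of $\mathscr{G}$. The preservation of the compatibility structure on the enlarged filtered space and the fact that $X'$, $X''$ remain $\overline{\mathbb{F}}$-semimartingales are the content of Lemma \ref{conditionalcopylemma}, not of this proposition, so the sketchier material there (``standard enlargement-of-filtration argument'') is out of scope rather than a defect of the present proof.
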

The following lemma is useful for simplifying the subsequent calculation.
\begin{lemma}\label{conditionalcopylemma}(\cite{guo2024ito}, Theorem 3.3)
Given semimartingales $X$, $Y$ on $(\Omega,\mathscr{F},\mathbb{F}=(\mathscr{F}_t)_{t\in [0,T]},\mathbb{P})$, and a sub-$\sigma$-algebra $\mathscr{G}\subset\mathscr{F}$, 
let $(X',Y')$ and $(X'',Y'')$ be the conditionally independent copies defined in Proposition \ref{conditionalcopyprop}. Then\\
(1) $X,\ Y,\ X',\ Y',\ X'',\ Y''$ are $\overline{\mathbb{F}} $-semimartingales;\\
(2) Suppose $\mathbb{E}[\left\lvert X_t\right\rvert]<\infty$ and $\mathbb{E}[\left\lvert Y_t\right\rvert ]<\infty$ for all $t\in [0,T]$, and denote
\begin{eqnarray*}&&
X_t^{\mathscr{G}}:=\overline{\mathbb{E}}[X_t|\mathscr{G}_T] =\overline{\mathbb{E}}[X_t|\mathscr{G}_t].
\end{eqnarray*}
Then $X^{\mathscr{G}}$ is a semimartingale w.r.t. $\mathbb{F},\ \overline{\mathbb{F}} $ and $\mathbb{G}$;\\
(3) If $X$ satisfies Assumption 1, so does $X^{\mathscr{G}}$ (changing $\mathbb{E}$ to $\overline{\mathbb{E}} $);\\
(4) Suppose $X,Y$ satisfy Assumption 1. Let $(Z_t)_{t\in[0,T]}$ be an $\mathbb{F}$-adapted c{\`a}dl{\`a}g process on $(\Omega,\mathscr{F},\mathbb{F}=(\mathscr{F}_t)_{t\in [0,T]},\mathbb{P})$, 
and $\left\lVert X\right\rVert_{\mathcal{H}_p}+ \left\lVert Z\right\rVert_{\mathcal{H}_q}<\infty$ with $\frac{1}{p}+\frac{1}{q}=1$, then
\begin{eqnarray}\label{eq conditionalexpectationchange}
\int_{0}^{t}  \,Z_{s-}dX_s^{\mathscr{G}}=\overline{\mathbb{E}}\left[ \int_{0}^{t}  \,Z_{s-}dX_s'\Big|\mathscr{F} \right]  .
\end{eqnarray}
Here $\left\lVert X\right\rVert_{\mathcal{H}_p}:=\mathbb{E}\left[\big(\left\lvert X_0\right\rvert+\left\lvert [M,M]_T\right\rvert^{\frac{1}{2}} +Var(V)_T \big)^p \right]^{\frac{1}{p}}$. 
Moreover, if $\left\lVert X\right\rVert_{\mathcal{H}_p}+\left\lVert Y\right\rVert_{\mathcal{H}_q}+\left\lVert Z\right\rVert_{\mathcal{H}_r}<\infty$ with 
$\frac{1}{p}+\frac{1}{q}+\frac{1}{r}=1$, then
\begin{eqnarray*}&&
\int_{0}^{t}  \,Z_{s-}d[X_s^{\mathscr{G}},Y_s^{\mathscr{G}}]=\overline{\mathbb{E}}\left[ \int_{0}^{t}  \,Z_{s-}d[X',Y'']_s\Big|\mathscr{F} \right],\\&&
\int_{0}^{t}  \,Z_{s-}d[X_s^{\mathscr{G}},Y_s]=\overline{\mathbb{E}}\left[ \int_{0}^{t}  \,Z_{s-}d[X',Y]_s\Big|\mathscr{F} \right].
\end{eqnarray*}
\end{lemma}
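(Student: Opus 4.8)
The backbone is the mixture structure of $\overline{\mathbb{P}}$. Writing $\mathbb{Q}_g:=\mathbb{P}(\,\cdot\,|\,\mathscr{G}=g)$ for a regular conditional probability, one reads off from the definition of $\overline{\mathbb{P}}$ that $\overline{\mathbb{P}}=\int \mathbb{Q}_g^{\otimes 3}\,\mathbb{P}_{\mathscr{G}}(dg)$, so that under $\overline{\mathbb{P}}$ the triple $(X,Y)$, $(X',Y')$, $(X'',Y'')$ is conditionally i.i.d.\ given $\mathscr{G}$, and, by Proposition \ref{conditionalcopyprop}, also given $\mathscr{F}$. I would record at the outset the three identities that drive everything: $\overline{\mathbb{E}}[X'_t\,|\,\mathscr{F}]=\overline{\mathbb{E}}[X_t\,|\,\mathscr{G}_T]=X^{\mathscr{G}}_t$ (conditional copies share the $\mathscr{G}$-conditional law and $X'\perp\mathscr{F}\,|\,\mathscr{G}_T$); the product identity $X^{\mathscr{G}}_t Y^{\mathscr{G}}_t=\overline{\mathbb{E}}[X'_t Y''_t\,|\,\mathscr{G}_t]$, coming from $X'\perp Y''\,|\,\mathscr{G}_t$, which explains why the brackets in (4) pair $X'$ with $Y''$ rather than with $Y'$; and $X^{\mathscr{G}}_t Y_t=\overline{\mathbb{E}}[X'_t Y_t\,|\,\mathscr{F}]$. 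I would also fix the canonical decompositions $X=X_0+M+V$, $Y=Y_0+N+U$ on $\Omega$ and note that by part (1) these are simultaneously the $\overline{\mathbb{F}}$-decompositions.

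For (1), I would invoke the classical fact that adjoining (conditionally) independent filtrations preserves the semimartingale property together with its decomposition: fibrewise under $\mathbb{Q}_g^{\otimes 3}$ the filtration $\overline{\mathbb{F}}$ is the product of three copies of $\mathbb{F}$, a process depending on a single coordinate is a product-filtration semimartingale there, and integrating over $g$ transfers this to $\overline{\mathbb{P}}$; the compatibility Assumption \ref{filtrationassumption} (for $\mathscr{G}=\mathscr{G}_T$) is what guarantees that the $\mathbb{Q}_g$-conditioning does not destroy the semimartingale property of $X$ itself. For (2), write $X^{\mathscr{G}}_t=X_0+\overline{\mathbb{E}}[M_t\,|\,\mathscr{G}_t]+\overline{\mathbb{E}}[V_t\,|\,\mathscr{G}_t]$: since $\mathscr{G}_t\subset\mathscr{F}_t$ and $M$ is an $\overline{\mathbb{F}}$-martingale, the tower property shows $\overline{\mathbb{E}}[M_\cdot\,|\,\mathscr{G}_\cdot]$ is a $\mathbb{G}$-martingale, and by the immersion property contained in Assumption \ref{filtrationassumption} it stays an $\mathbb{F}$- and $\overline{\mathbb{F}}$-martingale; and because $\mathbb{E}[\mathrm{Var}(V)_T]<\infty$, the process $\overline{\mathbb{E}}[V_\cdot\,|\,\mathscr{G}_\cdot]$ is a quasimartingale with integrable conditional variation, hence a special semimartingale by the Rao decomposition. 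Part (3) is then a matter of conditional-Jensen moment bounds applied to the \emph{intrinsic} decomposition of $X^{\mathscr{G}}$: $\overline{\mathbb{E}}[\mathrm{Var}(V^{\mathscr{G}})_T]\le\overline{\mathbb{E}}[\mathrm{Var}(V)_T]$, $\overline{\mathbb{E}}\big[\sum_{t}|\Delta V^{\mathscr{G}}_t|\big]\le\overline{\mathbb{E}}\big[\sum_{t}|\Delta V_t|\big]$, and $\overline{\mathbb{E}}\big[[M^{\mathscr{G}},M^{\mathscr{G}}]^c_T\big]\le\overline{\mathbb{E}}\big[(M^{\mathscr{G}}_T)^2\big]\le\overline{\mathbb{E}}\big[M_T^2\big]=\overline{\mathbb{E}}\big[[M,M]^c_T\big]<\infty$.

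The substance is (4). For a simple $\mathbb{F}$-predictable integrand $Z=\sum_k\xi_{t_k}\mathbf{1}_{(t_k,t_{k+1}]}$ with $\xi_{t_k}$ bounded and $\mathscr{F}_{t_k}$-measurable one has $\int_0^t Z_{s-}dX'_s=\sum_k\xi_{t_k}(X'_{t_{k+1}}-X'_{t_k})$, and since $\xi_{t_k}$ is $\mathscr{F}$-measurable and $\overline{\mathbb{E}}[X'_u\,|\,\mathscr{F}]=X^{\mathscr{G}}_u$, applying $\overline{\mathbb{E}}[\,\cdot\,|\,\mathscr{F}]$ yields exactly $\sum_k\xi_{t_k}(X^{\mathscr{G}}_{t_{k+1}}-X^{\mathscr{G}}_{t_k})=\int_0^t Z_{s-}dX^{\mathscr{G}}_s$. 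To pass to a general integrand, approximate $Z$ by simple processes and use that under $\tfrac1p+\tfrac1q=1$ an Emery/Hölder-type estimate gives $\big\|\int_0^\cdot Z_{s-}dX_s\big\|_{L^1}\lesssim\|Z\|_{\mathcal{H}_q}\|X\|_{\mathcal{H}_p}$ (and likewise for $X'$, resp.\ $X^{\mathscr{G}}$, under $\overline{\mathbb{P}}$, with $\|X'\|_{\mathcal{H}_p}=\|X\|_{\mathcal{H}_p}$ and $\|X^{\mathscr{G}}\|_{\mathcal{H}_p}\lesssim\|X\|_{\mathcal{H}_p}$ by (3)), so both sides depend continuously on $Z$ in $L^1(\overline{\mathbb{P}})$ and, $\overline{\mathbb{E}}[\,\cdot\,|\,\mathscr{F}]$ being an $L^1$-contraction, the identity survives the limit. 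The two bracket identities follow the same route: first derive $[X^{\mathscr{G}},Y^{\mathscr{G}}]=\overline{\mathbb{E}}\big[[X',Y'']\,\big|\,\mathscr{F}_\cdot\big]$ and $[X^{\mathscr{G}},Y]=\overline{\mathbb{E}}\big[[X',Y]\,\big|\,\mathscr{F}_\cdot\big]$ by polarizing the product identities $X^{\mathscr{G}}Y^{\mathscr{G}}=\overline{\mathbb{E}}[X'Y''\,|\,\mathscr{G}_\cdot]$, $X^{\mathscr{G}}Y=\overline{\mathbb{E}}[X'Y\,|\,\mathscr{F}_\cdot]$ together with integration by parts, and then integrate the $\mathbb{F}$-predictable $Z_{s-}$ against them, again approximating by simple processes, now under the three-exponent Hölder condition $\tfrac1p+\tfrac1q+\tfrac1r=1$.

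The main obstacle, to my mind, is two-fold and concentrated in (4) and (2). First, making the limiting argument for the stochastic-integral identity genuinely rigorous: the $\mathcal{H}_p$-norms are tailored precisely so that the requisite stochastic-integral estimates hold, but checking convergence of \emph{both} sides simultaneously, in a topology compatible with $\overline{\mathbb{E}}[\,\cdot\,|\,\mathscr{F}]$, and doing the same for the two bracket versions, is where the real work lies. Second, one must reckon with the fact that projecting a "continuous-martingale-plus-finite-variation" semimartingale onto a sub-filtration can produce a genuinely discontinuous semimartingale, so that Assumption \ref{assumptionX} for $X^{\mathscr{G}}$ has to be verified for its own intrinsic decomposition rather than term by term — and this is exactly the point at which the compatibility Assumption \ref{filtrationassumption} is indispensable.
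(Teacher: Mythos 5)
This lemma is not proved in the paper at all: it is imported verbatim from Guo and Zhang \cite{guo2024ito} (Theorem 3.3, together with Corollary 3.2 for the construction of the copies), and the paper uses it as a black box in the proof of Theorem \ref{itowentzellc2thm}. There is therefore no in-paper proof to compare your argument against; what you have written is a reconstruction of the proof of the cited result.

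As a reconstruction it is credible and follows the route one would expect: the disintegration $\overline{\mathbb{P}}=\int\mathbb{Q}_g^{\otimes 3}\,\mathbb{P}_{\mathscr{G}}(dg)$, the identity $\overline{\mathbb{E}}[X'_t\,|\,\mathscr{F}]=X^{\mathscr{G}}_t$, the simple-integrand computation followed by an $\mathcal{H}_p$--$\mathcal{H}_q$ (Emery-type) limiting argument for \eqref{eq conditionalexpectationchange}, and polarization plus integration by parts for the bracket identities; the pairing of $X'$ with $Y''$ is correctly explained by conditional independence given $\mathscr{F}$. Two places are thinner than they read. In part (1), the assertion that $X(\omega_0)$ remains an $\overline{\mathbb{F}}$-semimartingale is an enlargement-of-filtration statement: one must show that $M$ stays a martingale when $\mathbb{F}$ is enlarged by the $\sigma$-algebras generated by the other two coordinates, and reducing this to the compatibility condition $\mathscr{F}_t\perp\mathscr{G}_T\,|\,\mathscr{G}_t$ takes an explicit computation (conditioning first on $\mathscr{G}_T$, then using $\mathbb{P}(B\,|\,\mathscr{F}_t)=\mathbb{P}(B\,|\,\mathscr{G}_t)$ for $B\in\mathscr{G}_T$) that your fibrewise sketch does not actually carry out. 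In parts (2)--(3), your chain $\overline{\mathbb{E}}\big[[M^{\mathscr{G}},M^{\mathscr{G}}]^c_T\big]\le\overline{\mathbb{E}}\big[(M^{\mathscr{G}}_T)^2\big]\le\overline{\mathbb{E}}\big[M_T^2\big]$ presumes you have already identified the martingale part of the \emph{intrinsic} decomposition of $X^{\mathscr{G}}$ and shown it is square-integrable, whereas the optional projection of a continuous martingale onto $\mathbb{G}$ is in general only c\`adl\`ag; since Assumption \ref{assumptionX} is phrased for a decomposition with a \emph{continuous} local martingale part, verifying (3) requires either showing the jumps of $X^{\mathscr{G}}$ can be absorbed into the finite-variation part with the stated integrability, or reworking the estimates for a discontinuous martingale part. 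You flag this yourself as the main obstacle, correctly; for the full details one has to consult \cite{guo2024ito}.
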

To satisfy the condition of Lemma \ref{conditionalcopylemma} (4), we need further assumption.
\begin{assumption}\label{assumption h bounded}
For a RF-Fully-$\mathcal{C}^2$ function $F$ with decomposition \eqref{decompositionFmu}, the $\mu$-derivative of $H$ is bounded, i.e. there exists a constant $C>0$ such that for all $t\in[0,T]$, $\mu\in\mathcal{P}_2(\mathbb{R}^d)$ and $x\in\mathbb{R}^d$, it holds 
\begin{align*}
|\partial_{\mu}H_t(\mu,x)|\leq C,\quad a.s.
\end{align*}
\end{assumption}
Now we state and prove our second main theorem.
\begin{theorem}\label{itowentzellc2thm}
Given semimartingales $X$, $Y$ satisfying Assumption 1, denote $\mu_t:=\mathbb{P}_{X_t|\mathscr{G}_t}$, with sub-filtration $\mathbb{G}=(\mathscr{G}_t)_{t\in [0,T]}\subset\mathbb{F}$ satisfying Assumption \ref{filtrationassumption}. 
Then, for any $RF-Fully-\cC^2$ function $F$ satisfying Assumptions 2 and 4, it holds almost surely, for $0\leq s\leq t\leq T$, 
\begin{align}\label{itowentzellc2}
&F(t,\mu_t)-F(s,\mu_s)=\int_{s}^{t}  \,G_r(\mu_{r-})dr+\int_{s}^{t}  \,H_r(\mu_{r-})dY_r\nonumber\\
&\quad+\sum_{s<r\leq t}\left\{ F(r,\mu_r)-F(r-,\mu_{r-})- H_r(\mu_{r-})\Delta Y_r\right\} \nonumber\\
&\quad+\overline{\mathbb{E}}\bigg[ \int_{s}^{t}  \,\partial_{\mu}F(r-,\mu_{r-},X'_{r-})dX'_r+\frac{1}{2}\partial_{\mu}\partial_x F(r-,\mu_{r-},X'_{r-}):d[X',X']^c_r \nonumber\\
&\qquad\quad+\frac{1}{2}\int_{s}^{t}  \,\partial_{\mu\mu}F(r-,\mu_{r-},X'_{r-},X''_{r-}):d[X',X'']^c_r+\int_{s}^{t}  \,\partial_{\mu}H_r(\mu_{r-},X'_{r-}):d[X',Y]^c_r\nonumber\\
&\qquad\quad+\sum_{s<r\leq t}\bigg( \frac{1}{2} \bigg( \frac{\delta^2F}{\delta\mu^2}(r-,\mu_{r-},X'_r,X''_r)-\frac{\delta^2F}{\delta\mu^2}(r-,\mu_{r-},X'_{r-},X''_r)  \nonumber\\
&\qquad\qquad\qquad\quad-\frac{\delta^2F}{\delta\mu^2}(r-,\mu_{r-},X'_r,X''_{r-})+\frac{\delta^2F}{\delta\mu^2}(r-,\mu_{r-},X'_{r-},X''_{r-})\bigg) \nonumber\\
&\qquad\quad+(\frac{\delta H_r}{\delta\mu}(\mu_{r-},X'_r)-\frac{\delta H_r}{\delta\mu}(\mu_{r-},X'_{r-}))\Delta Y_r\nonumber\\
&\qquad\quad+\frac{\delta F}{\delta\mu}(r-,\mu_{r-},X'_r)-\frac{\delta F}{\delta\mu}(r-,\mu_{r-},X'_{r-})\bigg)\mathbf{1}_{\{\mu_r=\mu_{r-}\}}-\partial_{\mu}F(r-,\mu_{r-},X'_{r-})\Delta X'_r  \Big|\mathscr{F}\bigg].
\end{align}
\end{theorem}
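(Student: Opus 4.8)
The plan is to repeat the three-stage strategy of Theorem~\ref{itowentzellc1thm}, with the conditionally independent copies of Proposition~\ref{conditionalcopyprop} and Lemma~\ref{conditionalcopylemma} playing the role of the i.i.d.\ copies, and with extra care because the flow $\mu_\cdot$ is now a genuinely random $\mathcal{P}_2(\mathbb{R}^d)$-valued process.

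\textbf{Step 1 (cylindrical functions).} First I would establish \eqref{itowentzellc2} for cylindrical $F_0,G_s,H_s$ built on test functions $\eta_1,\dots,\eta_N\in C^2_b(\mathbb{R}^d)$, assuming $X,Y$ bounded. Set $Z^j_t:=\langle\eta_j,\mu_t\rangle=\overline{\mathbb{E}}[\eta_j(X_t)\mid\mathscr{G}_t]=(\eta_j(X))^{\mathscr{G}}_t$. Unlike in Section~3, $Z^j$ is no longer deterministic, but since $\eta_j(X)$ inherits Assumption~1 from $X$, Lemma~\ref{conditionalcopylemma} shows $Z^j$ is a $\mathbb{G}$- and $\overline{\mathbb{F}}$-semimartingale; applying Theorem~\ref{standitosemimart} to $\eta_j(X_t)$ and taking $\overline{\mathbb{E}}[\cdot\mid\mathscr{G}_t]$ via Lemma~\ref{conditionalcopylemma}(4) gives its decomposition in terms of $dX'_r$, $d[X',X']^c_r$ and compensated jumps of $\eta_j(X')$. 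Writing $F(t,\mu_t)=f(t,\mathbf{Z}_t)$ and invoking Theorem~\ref{standitowentzellsemimart} produces the four terms $I_1,\dots,I_4$ of \eqref{itoforf}. The term $I_1=\sum_j\int\partial_{z_j}f\,dZ^j$ is processed exactly as in Section~3, yielding the $\partial_\mu F\,dX'$, $\tfrac12\partial_x\partial_\mu F:d[X',X']^c$ pieces and the corresponding jump contributions. The genuinely new feature is that $I_2=\tfrac12\sum_{j,k}\int\partial_{z_jz_k}f\,d[Z^j,Z^k]^c$ and $I_3=\sum_j\int\partial_{z_j}h_r\,d[Z^j,Y]^c$ no longer vanish; using the bracket identities of Lemma~\ref{conditionalcopylemma}(4) (for two resp.\ three processes, after checking the requisite $\|\cdot\|_{\mathcal{H}_p}$-integrability), together with \eqref{directcalculus} and its second-order analogue $\sum_{j,k}\partial_{z_jz_k}f\,\partial_x\eta_j\otimes\partial_x\eta_k\leftrightarrow\partial_{\mu\mu}F$, these become the $\overline{\mathbb{E}}[\int\partial_{\mu\mu}F:d[X',X'']^c\mid\mathscr{F}]$ and $\overline{\mathbb{E}}[\int\partial_\mu H_r:d[X',Y]^c\mid\mathscr{F}]$ terms. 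Finally $I_4$ is reorganized as in \eqref{eqsum}--\eqref{eq sum explain2}: the first-order flat-derivative increments produce the $\frac{\delta F}{\delta\mu}$ and $\frac{\delta H_r}{\delta\mu}\Delta Y_r$ jump terms, the second-order increments of $f$ in $\mathbf{Z}$ produce the symmetric second difference of $\frac{\delta^2 F}{\delta\mu^2}$ in $(X',X'')$, and the indicators $\mathbf{1}_{\mu_r=\mu_{r-}}$ are inserted through the $\overline{\mathbb{E}}[\cdot\mid\mathscr{G}]$-argument of Step~1 in Theorem~\ref{itowentzellc1thm}.

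\textbf{Steps 2 and 3 (localization and approximation).} The localization step is a verbatim adaptation of Step~2 of Theorem~\ref{itowentzellc1thm}: stop at $\tau_n:=\inf\{t:|X_t|\ge n\}\wedge\inf\{t:|Y_t|\ge n\}$, apply Step~1 to $(X^n,Y^n,\mu^n)$, then pass $n\to\infty$ term by term --- the $dr$-integral by dominated convergence, the stochastic integrals via It\^o isometry and Assumption~1, and the $\overline{\mathbb{E}}[\cdot\mid\mathscr{F}]$-terms by first obtaining an $L^2(\overline{\Omega})$ bound (Fubini plus It\^o isometry) and extracting an a.s.\ convergent subsequence. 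Here the boundedness of $\partial_\mu F$, $\partial_x\partial_\mu F$, $\partial_{\mu\mu}F$ from the RF-Fully-$\mathcal{C}^2$ hypothesis and, crucially, Assumption~\ref{assumption h bounded} on $\partial_\mu H$ guarantee the $\|\cdot\|_{\mathcal{H}_p}$-conditions for Lemma~\ref{conditionalcopylemma}(4) uniformly in $n$; the terms $-\sum H_r\Delta Y_r$ and $-\sum\partial_\mu F\,\Delta X'_r$ are absorbed into $\int H_r\,dY^c_r$ and $\int\partial_\mu F\,d(X')^c_r$ as before. For the approximation step I would invoke the natural extension of Lemma~\ref{approximationlemma}, additionally tracking $\partial_{\mu\mu}$ --- the construction via $T_n,T_n^*$ preserves second-order Lions differentiability and the bound $|\partial_{\mu\mu}F^n|\le\sup|\partial_{\mu\mu}F|$ --- so that Step~1 applies to each $F^n$ and every term of \eqref{itowentzellc2} converges by the $L^2$-then-subsequence argument already used for $J_1,J_2,J_3$ in Section~3, the new double-flat-derivative jump term being handled by the fundamental theorem of calculus in both spatial slots in the spirit of Remark~3.2 of \cite{guo2023ito}.

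\textbf{Main obstacle.} The crux is Step~1: one must correctly match the two genuinely new objects --- the mixed bracket $\overline{\mathbb{E}}[\int\partial_{\mu\mu}F:d[X',X'']^c\mid\mathscr{F}]$ and the second-difference jump term in $\frac{\delta^2F}{\delta\mu^2}$ --- by pairing the two conditionally independent copies through the bracket identities of Lemma~\ref{conditionalcopylemma}(4), and in particular one has to show that the continuous covariation $[Z^j,Z^k]^c$ equals the $\overline{\mathbb{E}}[\cdot\mid\mathscr{F}]$-representation built from $d[X',X'']^c$ (rather than vanishing, as it did in Section~3), which is where the conditional-independence structure of Assumption~\ref{filtrationassumption} is essential. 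A secondary nuisance is keeping the decomposition of the jump set $\{\mu_r\ne\mu_{r-}\}$ consistent between the first- and second-order flat-derivative contributions when inserting the indicators $\mathbf{1}_{\mu_r=\mu_{r-}}$.
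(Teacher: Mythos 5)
Your proposal is correct and follows essentially the same route as the paper: the same three-stage scheme (cylindrical case with bounded $X,Y$, localization via $\tau_n$, cylindrical approximation), the same use of Proposition \ref{conditionalcopyprop} and Lemma \ref{conditionalcopylemma}(4) to turn the now non-vanishing brackets $[Z^j,Z^k]^c$ and $[Z^j,Y]^c$ into the $\partial_{\mu\mu}F:d[X',X'']^c$ and $\partial_\mu H:d[X',Y]^c$ terms, and the same jump-set decomposition for inserting $\mathbf{1}_{\mu_r=\mu_{r-}}$. The one point the paper makes explicit that you only gesture at is that $J_2$ and $J_3$ must be computed from the full bracket minus the jump sum (using $\Delta Z^j_t=\overline{\mathbb{E}}[\eta_j(X'_t)-\eta_j(X'_{t-})\mid\mathscr{F}]$), since the sum over jumps cannot be interchanged freely with the conditional expectation; this is exactly the obstacle you correctly identify.
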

\begin{proof} The proof is divided into 3 steps.\\
{\bf{Step 1}} Construct the It{\^o}-Wentzell-Lions formula for cylindrical functions.\\
We use the same notations as in the proof of Theorem \ref{itowentzellc1thm}, except $Z^j_t:=\langle \eta_j,\mu_t\rangle=\mathbb{E}[\eta_j(X_t)|\mathscr{G}_T]$, $\forall t\in[0,T]$. 
Apply the standard It{\^o} formula to $\eta_j(X_t)$ and then take conditional expectation on both sides, we have
\begin{align}\label{itoforeta2}
Z^j_t-Z^j_s&=\mathbb{E}\bigg[ \int_{s}^{t}  \,\partial_x\eta_j(X_{r-})dX_r+\frac{1}{2}\int_{s}^{t}  \,\partial_{xx}\eta_j(X_{r-}):d[X,X]^c_r \nonumber\\
&\qquad\quad+\sum_{s<r\leq t}\{\eta_j(X_r)-\eta_j(X_{r-})-\partial_x\eta_j(X_{r-})\Delta X_r\} \Big|\mathscr{G}_T \bigg],\ \ \ a.s.
\end{align}
From Lemma \ref{conditionalcopylemma} (1), we know that $\{\mathbf{Z}_t:=(Z^1_t,\cdots,Z^N_t)\}_{t\in [0,T]}$ is a $\overline{\mathbb{F}}$-semimartingale. Assume that $X$ is bounded. Then, thanks to Theorem \ref{standitowentzellsemimart}, we obtain
\begin{equation}\label{itoforf2}\begin{split}
&F(t,\mu_t)-F(s,\mu_s)=f(t,\mathbf{Z}_t)-f(s,\mathbf{Z}_s)\\
=&\int_{s}^{t}  \,g_r(\mathbf{Z}_{r-})dr+\int_{s}^{t}  \,h_r(\mathbf{Z_{r-}})dY_r 
+\underbrace{\sum_j\int_{s}^{t}  \,\partial_{z_j}f(r-,\mathbf{Z}_{r-})dZ^j_r}_{J_1}\\
&+\underbrace{\frac{1}{2}\sum_{j,k}\int_{s}^{t}  \,\partial_{z_j z_k}f(r-,\mathbf{Z}_{r-})d[Z^j,Z^k]^c_r}_{J_2}+\underbrace{\sum_j \int_{s}^{t}  \,\partial_{z_j}h_r(\mathbf{Z}_{r-})d[Z^j,Y]^c_r}_{J_3}\\
&+\underbrace{\sum_{s<r\leq t}\{f(r,\mathbf{Z}_r)-f(r-,\mathbf{Z}_{r-})-\sum_j \partial_{z_j}f(r-,\mathbf{Z}_{r-})\Delta Z_r^j-h_r(\mathbf{Z}_{r-})\Delta Y_r\}}_{J_4}, \ \ a.s.
\end{split}\end{equation}
Use the same argument as the proof of Theorem \ref{itowentzellc1thm}, and note that the conditions of Lemma \ref{conditionalcopylemma} are satisfied,
we can get 
\begin{align*}
J_1=&\,\overline{\mathbb{E}}\bigg[ \int_{s}^{t}  \,\sum_{j}\partial_{z_j}f(r-,\mathbf{Z}_{r-})(\partial_x\eta_j(X'_{r-})dX'_r+\frac{1}{2}\partial_{xx}\eta_j(X_{r-}'):d[X',X']^c_r) \\
&\qquad+\sum_{s<r\leq t}\sum_{j}\partial_{z_j}f(r-,\mathbf{Z}_{r-})\{\eta_j(X_r)-\eta_j(X_{r-})-\partial_x\eta_j(X_{r-}')\Delta X_r\} \Big|\mathscr{F}\bigg] \\
=&\,\overline{\mathbb{E}}\bigg[ \int_{s}^{t}  \,\partial_{\mu}F(r-,\mu_{r-},X'_{r-})dX'_r+\frac{1}{2}\partial_{\mu}\partial_x F(r-,\mu_{r-},X'_{r-}):d[X',X']^c_r \\
&\qquad+\sum_{s<r\leq t}\left\{ \frac{\delta F}{\delta\mu}(r-,\mu_{r-},X'_r)-\frac{\delta F}{\delta\mu}(r-,\mu_{r-},X'_{r-})-\partial_{\mu}F(r-,\mu_{r-},X'_{r-})\Delta X'_r \right\} \Big|\mathscr{F}\bigg].
\end{align*}
For $J_2$, a direct calculation provides
\begin{align*}
&[\eta_j(X'),\eta_k(X'')]_t=[\eta_j(X'),\eta_k(X'')]^c_t+\sum_{0<s\leq t}\Delta \eta_j(X_s')\Delta \eta_k(X_s'')\\
=&\int_{0}^{t}\partial_x\eta_j(X'_{s-})\partial_x\eta_j(X'_{s-})^{\intercal  }:d[X',X'']^c_s+\sum_{0<s\leq t}\big\{(\eta_j(X'_s)-\eta_j(X'_{s-}))(\eta_j(X''_s)-\eta_j(X''_{s-}))\big\} .
\end{align*}
Then, by Lemma \ref{conditionalcopylemma} (4), we have
\begin{align*}
J_2=&\,\frac{1}{2}\sum_{j,k}\int_{s}^{t}  \,\partial_{z_j z_k}f(r-,\mathbf{Z}_{r-})d[Z^j,Z^k]_r
-\frac{1}{2}\sum_{s<r\leq t}\bigg\{ \sum_{j,k}\partial_{z_j z_k}f(r-,\mathbf{Z}_{r-})(\Delta Z^j_r\Delta Z^k_s) \bigg\}\\
=&\,\overline{\mathbb{E}}\bigg[ \frac{1}{2}\sum_{j,k}\int_{s}^{t}  \,\partial_{z_j,z_k}f(r-,\mathbf{Z}_{r-})\partial_x\eta_j(X'_{r-})(\partial_x\eta_k(X''_{r-}))^{\intercal  }:d[X',X'']^c_r \\
&\quad+\frac{1}{2}\sum_{s<r\leq t}\Big( \sum_{j,k}\partial_{z_j z_k}f(r-,\mathbf{Z}_{r-})(\eta_j(X'_s)-\eta_j(X'_{s-}))(\eta_k(X''_s)-\eta_k(X''_{s-})) \Big) \Big|\mathscr{F} \bigg] \\
&-\frac{1}{2}\sum_{s<r\leq t}\overline{\mathbb{E}}\bigg[ \sum_{j,k}\partial_{z_j z_k}f(r-,\mathbf{Z}_{r-})(\eta_j(X'_s)-\eta_j(X'_{s-}))(\eta_k(X''_s)-\eta_k(X''_{s-}))\Big|\mathscr{F} \bigg]\\
=&\,\overline{\mathbb{E}}\left[ \frac{1}{2}\int_{s}^{t}  \,\partial_{\mu\mu}F(r-,\mu_{r-},X'_{r-},X''_{r-}):d[X',X'']^c_r\Big|\mathscr{F} \right]\\
&+\overline{\mathbb{E}}\bigg[ \frac{1}{2}\sum_{s<r\leq t}\bigg\{ \frac{\delta^2F}{\delta\mu^2}(r-,\mu_{r-},X'_r,X''_r)-\frac{\delta^2F}{\delta\mu^2}(r-,\mu_{r-},X'_{r-},X''_r)  \\
&\qquad-\frac{\delta^2F}{\delta\mu^2}(r-,\mu_{r-},X'_r,X''_{r-})+\frac{\delta^2F}{\delta\mu^2}(r-,\mu_{r-},X'_{r-},X''_{r-}) \bigg\}\Big|\mathscr{F} \bigg]\\
&-\frac{1}{2}\sum_{s<r\leq t}\overline{\mathbb{E}}\bigg[ \frac{\delta^2F}{\delta\mu^2}(r-,\mu_{r-},X'_r,X''_r)-\frac{\delta^2F}{\delta\mu^2}(r-,\mu_{r-},X'_{r-},X''_r)  \\
&\qquad-\frac{\delta^2F}{\delta\mu^2}(r-,\mu_{r-},X'_r,X''_{r-})+\frac{\delta^2F}{\delta\mu^2}(r-,\mu_{r-},X'_{r-},X''_{r-}) \Big|\mathscr{F}\bigg] .
\end{align*}
The last term of the second equality above follows from
\begin{align}\label{deltaZ}
\Delta Z^j_t=&\,\overline{\mathbb{E}}[\eta_j(X_t)|\mathscr{G}_T]-\lim_{s\to t}\overline{\mathbb{E}}[\eta_j(X_s)|\mathscr{G}_T]=\overline{\mathbb{E}}[\eta_j(X_t)|\mathscr{G}_T]-\overline{\mathbb{E}}[\lim_{s\to t}\eta_j(X_s)|\mathscr{G}_T]\nonumber\\
=&\,\overline{\mathbb{E}}[\eta_j(X_t)-\eta_j(X_{t-})|\mathscr{G}_T]=\overline{\mathbb{E}}[\eta_j(X'_t)-\eta_j(X'_{t-})|\mathscr{F}], \ \ \ a.s.,
\end{align}
where the second equality of (\ref{deltaZ}) holds due to the conditional dominated convergence theorem. Still by the same argument as Theorem \ref{itowentzellc1thm} (see \eqref{eqsum}-\eqref{eq sum explain2}), $J_2$ is rewritten as
\begin{align*}
J_2&=\overline{\mathbb{E}}\bigg[ \frac{1}{2}\int_{s}^{t}  \,\partial_{\mu\mu}F(r-,\mu_{r-},X'_{r-},X''_{r-}):d[X',X'']^c_r \\
&\qquad+\frac{1}{2}\sum_{s<r\leq t}\bigg\{ \frac{\delta^2F}{\delta\mu^2}(r-,\mu_{r-},X'_r,X''_r)-\frac{\delta^2F}{\delta\mu^2}(r-,\mu_{r-},X'_{r-},X''_r)   \\
&\qquad\qquad-\frac{\delta^2F}{\delta\mu^2}(r-,\mu_{r-},X'_r,X''_{r-})+\frac{\delta^2F}{\delta\mu^2}(r-,\mu_{r-},X'_{r-},X''_{r-}) \bigg\}\mathbf{1}_{\{\mu_r=\mu_{r-}\}}\Big|\mathscr{F} \bigg].
\end{align*}
Applying the same method as above to $J_3$, we have
\begin{align*}
J_3=&\sum_j\int_{s}^{t}  \,\partial_{z_j}h_r(\mathbf{Z}_{r-})d[Z^j,Y]_r-\sum_j\sum_{s<r\leq t}\{\partial_{z_j}h_r(\mathbf{Z}_{r-})(\Delta Z^j_r\Delta Y_r)\} \\
=\,&\overline{\mathbb{E}}\bigg[ \sum_j\int_{s}^{t}  \,\partial_{z_j}h_r(\mathbf{Z}_{r-})\partial_x\eta_j(X_{r-}'):d[X',Y]_r^c +\sum_{s<r\leq t}\{(\eta_j(X'_r)-\eta_j(X'_{r-}))(Y_r-Y_{r-})\}\Big|\mathscr{F} \bigg]\\
&-\sum_{s<r\leq t}\bigg\{ \sum_j\partial_{z_j}h_r(\mathbf{Z}_{r-})(\eta_j(X'_r)-\eta_j(X'_{r-}))(Y_r-Y_{r-}) \bigg\}\\
=\,&\overline{\mathbb{E}}\bigg[ \int_{s}^{t}  \,\partial_{\mu}H_r(\mu_{r-},X'_{r-}):d[X',Y]^c_r+\sum_{s<r\leq t}(\frac{\delta H_r}{\delta\mu}(\mu_{r-},X_r)-\frac{\delta H_r}{\delta\mu}(\mu_{r-},X_{r-}))\Delta Y_r \Big|\mathscr{F}\bigg]\\
&-\sum_{s<r\leq t}\bigg\{\overline{\mathbb{E}}\bigg[ \frac{\delta H_r}{\delta\mu}(\mu_{r-},X'_r)-\frac{\delta H_r}{\delta\mu}(\mu_{r-},X'_{r-})\Delta Y_r\Big|\mathscr{F}\bigg]\bigg\},
\end{align*}
which can be rewritten as 
\begin{align*}
\overline{\mathbb{E}}\bigg[ \int_{s}^{t}  \,\partial_{\mu}H_r(\mu_{r-},X'_{r-}):d[X',Y]^c_r+\sum_{s<r\leq t}(\frac{\delta H_r}{\delta\mu}(\mu_{r-},X'_r)-\frac{\delta H_r}{\delta\mu}(\mu_{r-},X'_{r-}))\mathbf{1}_{\{\mu_r=\mu_{r-}\}}\Delta Y_r \Big|\mathscr{F}  \bigg].
\end{align*}
For $J_4$, by \eqref{directcalculus}, we can easily get 
\begin{align*}
J_4=&\sum_{s<r\leq t}\bigg\{ F(r,\mu_r)-F(r-,\mu_{r-})-H_r(\mu_{r-})\Delta Y_r\\&\qquad\qquad-\overline{\mathbb{E}}\left[\frac{\delta F}{\delta\mu}(r-,\mu_{r-},X'_r)-\frac{\delta F}{\delta\mu}(r-,\mu_{r-},X'_{r-})\Big|\mathscr{F}\right]\bigg\}.
\end{align*}
Thus, the It{\^o}-Wentzell-Lions formula for $\mathcal{C}^2-$cylindrical functions is established.\\
{\bf{Step 2 and Step 3}} Localization argument and cylindrical approximation.\\
Thanks to the fact that Proposition \ref{cylindricalapproximationprop} still holds for RF-Fully-$\mathcal{C}^2$ functions (see Theorem 4.10 in \cite{cox2024controlled} or Proposition 4.2 in \cite{guo2024ito}), we can proceed these two steps exactly the same as those of Theorem \ref{itowentzellc1thm}. Note that the sequence of approximating functions $H^n$ in Step 3 of the proof of Theorem \ref{itowentzellc1thm} can be chosen uniformly bounded, as well as its derivatives, hence, Assumption \ref{assumption h bounded} is verified.
We omit the rest of the proof.
\end{proof}

\section{Some special cases}
\subsection{It{\^o}-Wentzell-Lions formula for time-space-measure-dependent functions}
By using chain rule, we can extend It{\^o}-Wentzell-Lions formula \eqref{itowentzellc1} to the functions involving an extra variable in $\mathbb{R}^d$. Precisely, we consider a function $F:\Omega\times[0,T]\times \mathbb{R}^d\times\mathcal{P}_2(\mathbb{R}^d)\to\mathbb{R}$.
\begin{definition}
$F$ is said to be RF-Partially-${C}^{2,2}(\mathbb{R}^d\times\mathcal{P}_2(\mathbb{R}^d))$ if\\
(i) $F(\cdot, x,\cdot)$ is $RF-Partially-\cC^2$ for any $x\in \mathbb{R}^d$;\\
(ii) $\partial_x F(t,x,\mu)$ and $\partial_{xx}F(t,x,\mu)$ exist for any $(t,x,\mu)$, moreover, $\partial_xF$ and $\partial_{xx}F$ are jointly continuous w.r.t. $(t,x,\mu)$.
\end{definition}
\begin{corollary}\label{corollary1}
Let $X,Y$ be semimartingales satisfying Assumption 1, and denote $\mu_t:=\mathbb{P}_{X_t}$. Then for any RF-Partially-${C}^{2,2}(\mathbb{R}^d\times\mathcal{P}_2(\mathbb{R}^d))$ function $F$ satisfying Assumption 2, it holds for any $0\leq s\leq t\leq T$ that
\begin{align}\label{eq coro1}
&F(t,X_t,\mu_t)-F(s,X_s,\mu_s)=\int_{s}^{t}  \,G_r(X_{r-},\mu_{r-})dr +\int_{s}^{t}  \,H_r(X_{r-},\mu_{r-})dY_r\nonumber\\
&+\tilde{\mathbb{E}} \left[  \int_{s}^{t}  \,\partial_{\mu}F(r-,X_{r-},\mu_{r-},\tilde{X} _{r-}) d\tilde{X} _r+\frac{1}{2}\partial_x\partial_{\mu}F(r-,X_{r-},\mu_{r-},\tilde{X} _{r-}):d[\tilde{X} ,\tilde{X} ]_r^c \right]\nonumber\\
&+\sum_{s<r\leq t}\left\{F(r,X_{r-},\mu_r)-F(r-,X_{r-},\mu_{r-})-H_r(X_{r-},\mu_{r-})\Delta Y_r\right\}\nonumber\\
&+\tilde{\mathbb{E}} \bigg[ \sum_{s<r\leq t}\left\{ (\frac{\delta F}{\delta\mu}(r-,X_{r-},\mu_{r-},\tilde{X} _r)-\frac{\delta F}{\delta\mu}(r-,X_{r-},\mu_{r-},\tilde{X} _{r-}))\mathbf{1}_{\{\mu_r=\mu_{r-}\}}\right\}\nonumber\\
&\qquad-\sum_{s<r\leq t}\left\{ \partial_{\mu}F(r-,\mu_{r-},\tilde{X} _{r-})\Delta \tilde{X} _r\right\}\bigg]
+\int_{s}^{t}  \,\partial_{x}F(r,X_{r-},\mu_{r})dX_r\nonumber\\&+\frac{1}{2}\int_{s}^{t}  \,\partial_{xx}F(r,X_{r-},\mu_{r}):d[X,X]^c_r+\int_{s}^{t}  \,\partial_{x}H_r(X_{r-},\mu_{r}):d[X,Y]^c_r\nonumber\\&+
\sum_{s<r\leq t}\left\{ F(r,X_r,\mu_r)-F(r,X_{r-},\mu_{r})-\sum_{i=1}^d\partial_{x_i}F(r,X_{r-},\mu_{r})\Delta X_r \right\},\ \ \ a.s.
\end{align}
\end{corollary}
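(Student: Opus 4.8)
The plan is to obtain \eqref{eq coro1} by combining the It\^o-Wentzell-Lions formula \eqref{itowentzellc1} with the standard It\^o-Wentzell formula of Theorem \ref{standitowentzellsemimart}, treating the $x$-dependence and the $\mu$-dependence separately. The natural device is to introduce, for each fixed state path, an auxiliary random field and apply \eqref{itowentzellc1} to its measure argument, and then apply Theorem \ref{standitowentzellsemimart} to the remaining state argument. Concretely, define for fixed $\nu\in\mathcal{P}_2(\mathbb{R}^d)$ the random field $(t,x)\mapsto F(t,x,\nu)$, which is $RF$-$C^2$ by hypothesis (ii) in the definition of $RF$-Partially-$C^{2,2}$, and for fixed $x$ the random field $(t,\mu)\mapsto F(t,x,\mu)$, which is $RF$-Partially-$\mathcal{C}^2$ by hypothesis (i). The identity to prove is then a two-variable It\^o-Wentzell formula, and the standard route is a ``freeze one variable, differentiate the other'' argument made rigorous through the cylindrical-function reduction already established.

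The key steps, in order, are as follows. First I would reduce, exactly as in the proof of Theorem \ref{itowentzellc1thm}, to the case where $F_0$, $G$, $H$ are cylindrical in $\mu$ with smooth-in-$x$ coefficients, and where $X,Y$ are bounded via the localization stopping times $\tau_n$; the passage back to the general $RF$-Partially-$C^{2,2}$ case is the same three-stage approximation (Lemma \ref{approximationlemma}) applied to the $\mu$-variable, with the extra $x$-derivatives carried along unchanged since they involve only classical calculus. Second, with $F(t,x,\mu_t)=f(t,x,\mathbf{Z}_t)$ where $Z^j_t=\langle\eta_j,\mu_t\rangle$ and $f$ is now a function of $(t,x)$ as well, I would apply Theorem \ref{standitowentzellsemimart} to the composite random field $(t,x)\mapsto f(t,x,\mathbf{Z}_t)$ evaluated along $X_t$: this is legitimate because $\mathbf{Z}$ is a deterministic c\`adl\`ag finite-variation process, so $f(t,x,\mathbf{Z}_t)$ is itself an $RF$-$C^2$ random field in $(t,x)$ whose ``$G$-part'' and ``$H$-part'' are $g_t(x,\mathbf{Z}_t)+\sum_j\partial_{z_j}f\,dZ^j$ and $h_t(x,\mathbf{Z}_t)$ respectively. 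Third, I would identify the terms: the $dX_r$, $d[X,X]^c_r$, $d[X,Y]^c_r$ and state-jump terms in \eqref{eq coro1} come directly from this application of Theorem \ref{standitowentzellsemimart}; the $dr$ and $dY_r$ terms and the $\tilde{\mathbb{E}}$-terms come from the ``$dZ^j$'' contributions, which by the computation of Step 1 of Theorem \ref{itowentzellc1thm} (using \eqref{integralz}, \eqref{interchangederivativewithitointegral}, \eqref{directcalculus}) reassemble precisely into the Lions-derivative expressions, now carrying the extra frozen argument $X_{r-}$. The cross term $d[X,Z^j]^c=0$ since $Z^j$ has finite variation, so no mixed state-measure quadratic covariation appears, which is why \eqref{eq coro1} has no $\partial_x\partial_\mu F:d[X,\tilde X]^c$ contribution — consistent with $X$ and $\tilde X$ living on different spaces anyway.

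The main obstacle will be bookkeeping the jump terms correctly when both $x$ and $\mu$ jump at the same time $r$. One must be careful that the discrete sums in \eqref{eq coro1} are organized as a ``telescoping in two stages'': first the pure-$\mu$ jump $F(r,X_{r-},\mu_r)-F(r-,X_{r-},\mu_{r-})$ with $X$ frozen at $X_{r-}$ (this is what \eqref{itowentzellc1} with frozen $x$ produces), and then the pure-$x$ jump $F(r,X_r,\mu_r)-F(r,X_{r-},\mu_r)$ with $\mu$ already at $\mu_r$ (this is what Theorem \ref{standitowentzellsemimart} produces); the two compensators $-H_r\Delta Y_r$ and $-\sum_i\partial_{x_i}F\,\Delta X_r$ attach to the respective stages. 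I would verify this decomposition by writing $F(r,X_r,\mu_r)-F(r-,X_{r-},\mu_{r-})=[F(r,X_{r-},\mu_r)-F(r-,X_{r-},\mu_{r-})]+[F(r,X_r,\mu_r)-F(r,X_{r-},\mu_r)]$ termwise and matching, and checking summability of each piece via Assumption \ref{assumptionX} (finiteness of $\mathbb{E}\sum|\Delta X_t|$, $\mathbb{E}\sum|\Delta Y_t|$) together with the boundedness of the derivatives of $F$, exactly as in Remark 3.2 of \cite{guo2023ito}. Once the jump algebra is pinned down, the continuous part and the localization/approximation limits go through verbatim as in the proof of Theorem \ref{itowentzellc1thm}, so I would simply refer to those arguments rather than repeat them.
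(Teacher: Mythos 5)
Your target decomposition of the increments and jumps (freeze $x$, move $(t,\mu)$; then freeze $\mu$, move $x$) is exactly the one the paper uses, and your termwise jump identity $F(r,X_r,\mu_r)-F(r-,X_{r-},\mu_{r-})=[F(r,X_{r-},\mu_r)-F(r-,X_{r-},\mu_{r-})]+[F(r,X_r,\mu_r)-F(r,X_{r-},\mu_r)]$ is the infinitesimal form of the paper's partition argument \eqref{C1 eq1}. However, the mechanism you propose for the continuous part --- a single application of Theorem \ref{standitowentzellsemimart} to the composite random field $\tilde F(t,x):=f(t,x,\mathbf{Z}_t)$ --- has a genuine gap. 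Theorem \ref{standitowentzellsemimart} requires the random field to be of the canonical form \eqref{decompositionF}, i.e. $F_0(x)+\int_0^tG_s(x)\,ds+\int_0^tH_s(x)\,dY_s$ with an $x$-independent driving semimartingale. The field $\tilde F$ is not of this form: its decomposition in $t$ (itself obtained by an It\^o--Wentzell computation in the pair $(t,\mathbf z)$ along $\mathbf Z$) contains the term $\sum_j\int_0^t\partial_{z_j}f(r-,x,\mathbf Z_{r-})\,dZ^j_r$, an integral against a deterministic finite-variation measure that need not be absolutely continuous and hence cannot occupy the ``$G_s\,ds$'' slot, together with jump-correction sums $\sum_{0<r\leq t}\{f(r,x,\mathbf Z_r)-f(r-,x,\mathbf Z_{r-})-\partial_zf(r-,x,\mathbf Z_{r-})\Delta\mathbf Z_r-\cdots\}$ that depend on $x$ nonlinearly and therefore cannot be written as $\int_0^t\tilde H_s(x)\,d\hat Y_s$ for any enlarged $x$-independent semimartingale $\hat Y$. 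So the hypothesis of Theorem \ref{standitowentzellsemimart} is not verified for $\tilde F$, and your claimed ``$G$-part'' $g_t(x,\mathbf Z_t)+\sum_j\partial_{z_j}f\,dZ^j$ is not a legitimate decomposition. (When $Y$ and $X$ are continuous the obstruction disappears, which is why this shortcut works in the Brownian setting but not in the general jump setting here.)

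The paper sidesteps this entirely: it telescopes $F(t,X_t,\mu_t)-F(s,X_s,\mu_s)$ over a partition $\{t_j^m\}$ as in \eqref{C1 eq1}, applies Lemma \ref{lemma for C1} (It\^o in $x$ with $t$ and $\mu$ frozen at $t_{j+1}^m$) to one family of increments and Theorem \ref{itowentzellc1thm} (with $x$ frozen at $X_{t_j^m}$) to the other, and passes to the limit; no composite field and no re-run of the cylindrical approximation with an extra $x$-variable are needed, since Theorem \ref{itowentzellc1thm} is used as a black box. Note also that even if your composite-field application went through, its jump compensator would come out as $-\partial_xF(r-,X_{r-},\mu_{r-})\Delta X_r$ (pre-jump measure), whereas \eqref{eq coro1} carries $-\partial_{x_i}F(r,X_{r-},\mu_r)\Delta X_r$ (post-jump measure); reconciling the two is precisely the ordering issue behind the two distinct formulae \eqref{eq coro1} and \eqref{eq coro1 alter} in the remark following the corollary, and it cannot be dismissed as bookkeeping. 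The repair is to adopt the partition telescoping from the outset, as the paper does.
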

To prove Corollary \ref{corollary1}, we need the following lemma which can be directly obtained by the standard It{\^o} formula.
\begin{lemma}\label{lemma for C1}
Let $F,\ X,\ Y$ and $\mu$ be the same as in Corollary 1, then for any $0\leq s<t\leq T$ the following relation holds almost surely
\begin{align}\label{111}
F(t,X_t,\mu_t)-F(t,X_s,\mu_t)
=& \int_{s}^{t}\partial_x F(t,X_{r-},\mu_{t})dX_r+\int_{s}^{t}\partial_x H_{r}(X_{r-},\mu_{t}):d[X,Y]_r^c\nonumber\\&+\frac{1}{2}\int_{s}^{t}\, \partial_{xx}F(t,X_{r-},\mu_{t}):d[X,X]^c_r\\&+\sum_{s<r\leq t}\Big( F(t,X_r,\mu_{t})-F(t,X_{r-},\mu_{t})-\partial_x F(t,X_{r-},\mu_{t})\Delta X_r\Big).\nonumber
\end{align}
\end{lemma}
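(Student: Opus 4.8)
The plan is to derive \eqref{111} as a direct application of the standard It{\^o} formula for general semimartingales (Theorem \ref{standitosemimart}), applied to the $\mathbb R^d$-valued semimartingale $X$ only, with \emph{both} the time variable and the measure variable frozen at their terminal values $t$ and $\mu_t$. More precisely, fix $t\in[0,T]$ and set $g(\cdot):=F(t,\cdot,\mu_t)$. Under the hypothesis RF-Partially-$C^{2,2}(\mathbb R^d\times\mathcal P_2(\mathbb R^d))$, condition (ii) guarantees that $g\in C^2(\mathbb R^d)$ (with $\partial_x g=\partial_x F(t,\cdot,\mu_t)$ and $\partial_{xx}g=\partial_{xx}F(t,\cdot,\mu_t)$), so Theorem \ref{standitosemimart} applies on the interval $[s,t]$ and yields
\begin{align*}
F(t,X_t,\mu_t)-F(t,X_s,\mu_t)=&\int_s^t\,\partial_x F(t,X_{r-},\mu_t)\,dX_r+\frac12\int_s^t\,\partial_{xx}F(t,X_{r-},\mu_t):d[X,X]_r^c\\
&+\sum_{s<r\le t}\bigl(F(t,X_r,\mu_t)-F(t,X_{r-},\mu_t)-\partial_x F(t,X_{r-},\mu_t)\Delta X_r\bigr).
\end{align*}
This already contains three of the four terms on the right-hand side of \eqref{111}; what remains is to identify the cross term $\int_s^t\partial_x H_r(X_{r-},\mu_t):d[X,Y]_r^c$.

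The cross term arises because $g=F(t,\cdot,\mu_t)$ is itself not a fixed deterministic function but carries the semimartingale structure in $t$ through \eqref{decompositionFmu}: writing $F(t,x,\mu)=F_0(x,\mu)+\int_0^t G_r(x,\mu)\,dr+\int_0^t H_r(x,\mu)\,dY_r$, the term $\partial_x F$ inherits the decomposition $\partial_x F(t,x,\mu)=\partial_xF_0(x,\mu)+\int_0^t\partial_xG_r(x,\mu)\,dr+\int_0^t\partial_xH_r(x,\mu)\,dY_r$ (interchanging $\partial_x$ with the integrals is justified under RF-$C^2$-type bounds exactly as in the proof of Theorem \ref{standitowentzellsemimart}). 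Hence the stochastic integral $\int_s^t\partial_x F(t,X_{r-},\mu_t)\,dX_r$ is an integral against $X$ of an integrand that is itself a semimartingale in its "$t$-parameter," and its $d[\cdot,X]^c$-bracket contribution is precisely $\int_s^t \partial_x H_r(X_{r-},\mu_t):d[Y,X]_r^c$. The cleanest way to produce this cleanly is to mirror the proof of Theorem \ref{standitowentzellsemimart}: apply the classical It{\^o}--Wentzell formula (Kunita, \cite{kunita1981some}) to the continuous random field $\bar F(t,x):=F(t,x,\mu_t)-\sum_{0<r\le t}\{F(r,X_r,\mu_r)-\cdots\}$ restricted to the $x$-variable with $\mu_t$ frozen — or, more directly, simply invoke the It{\^o}--Wentzell formula for the random field $(t,x)\mapsto F(t,x,\mu_t)$ composed with $X$, and then subtract off the genuine $t$-evolution terms $G,H$ which will reappear in the full Corollary \ref{corollary1}. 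Since the lemma statement says it "can be directly obtained by the standard It{\^o} formula," the intended reading is that $\mu_t$ is held fixed and Theorem \ref{standitowentzellsemimart} (the general-semimartingale It{\^o}--Wentzell formula already proved) is applied to the random field $x\mapsto F(t,x,\mu_t)$, whose $G,H$-type data are $G(\cdot,\mu_t),H(\cdot,\mu_t)$; the $G$- and $H$-drift/martingale terms then cancel against the identity $F(t,X_s,\mu_t)=F(t,X_s,\mu_t)$ on the left because no time-integration in the first slot occurs between $s$ and $t$ when the first argument is frozen at $X_s$ — wait, that is not quite the structure, so the correct bookkeeping is to apply Theorem \ref{standitowentzellsemimart} to the field $\widehat F(r,x):=F(t,x,\mu_t)$ (constant in $r$), for which $G\equiv H\equiv 0$, recovering exactly the four displayed terms with $\partial_xH_r(X_{r-},\mu_t)$ emerging from the $\partial_xH$-bracket term of that theorem applied with $H_r:=\partial_x$-data $\equiv 0$ — hence in fact no cross term, contradiction; so one genuinely must keep the $t$-dependence of the first slot and the cross term comes from $d[\,\partial_xF(\cdot),X\,]^c$.

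The main obstacle, then, is the bookkeeping around which variables are frozen and which are allowed to evolve: one wants $\mu$ frozen at $\mu_t$ but the "$t$" in $F(t,x,\mu_t)$ to retain its semimartingale structure in the first slot so that the $[X,Y]^c$-cross term is produced, while simultaneously \emph{not} producing the $\int G\,dr+\int H\,dY$ terms (those belong to the full Corollary \ref{corollary1}, not to this lemma). The resolution is to apply Theorem \ref{standitowentzellsemimart} to the random field $r\mapsto F(r,x,\mu_t)$ on $[s,t]$ evaluated along $X$, obtaining $F(t,X_t,\mu_t)-F(s,X_s,\mu_t)=\int_s^t G_r(X_{r-},\mu_t)\,dr+\int_s^t H_r(X_{r-},\mu_t)\,dY_r+\int_s^t\partial_xF(r,X_{r-},\mu_t)\,dX_r+\frac12\int_s^t\partial_{xx}F(r,X_{r-},\mu_t):d[X,X]_r^c+\int_s^t\partial_xH_r(X_{r-},\mu_t):d[X,Y]_r^c+\sum_{s<r\le t}\{F(r,X_r,\mu_t)-F(r-,X_{r-},\mu_t)-\partial_xF(r-,X_{r-},\mu_t)\Delta X_r-H_r(X_{r-},\mu_t)\Delta Y_r\}$, and then comparing with the trivial identity $F(t,X_s,\mu_t)-F(s,X_s,\mu_t)=\int_s^t G_r(X_s,\mu_t)\,dr+\int_s^t H_r(X_s,\mu_t)\,dY_r$; after the routine but slightly delicate cancellation of the $G$- and $H$-pieces and rewriting the jump sum (using that $\partial_xH$ is continuous so $\partial_xF(r-,X_{r-},\mu_t)$ may be replaced by $\partial_xF(t,X_{r-},\mu_t)$ inside the bracket integrals, and collapsing the continuous-vs.-jump split as in the Remark after Theorem \ref{standitowentzellsemimart}), one arrives at exactly \eqref{111}. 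I expect the cancellation of the $G,H$ terms and the careful treatment of the jump sum and of freezing $r-$ versus $t$ in the second (frozen-measure) argument to be the only place requiring genuine care; everything else is a bounded, uniformly-integrable application of dominated convergence and the already-established formulae.
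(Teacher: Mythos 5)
Your final route --- writing $F(t,X_t,\mu_t)-F(t,X_s,\mu_t)$ as the difference of $F(t,X_t,\mu_t)-F(s,X_s,\mu_t)$ (expanded by Theorem~\ref{standitowentzellsemimart} applied to the adapted field $(r,x)\mapsto F(r,x,\mu_t)$) and $F(t,X_s,\mu_t)-F(s,X_s,\mu_t)=\int_s^tG_r(X_s,\mu_t)\,dr+\int_s^tH_r(X_s,\mu_t)\,dY_r$ --- is a legitimate starting decomposition, but the step you describe as ``the routine but slightly delicate cancellation of the $G$- and $H$-pieces'' is where the proof actually lives, and no cancellation occurs there. After subtracting, you are left with $\int_s^t[G_r(X_{r-},\mu_t)-G_r(X_s,\mu_t)]\,dr+\int_s^t[H_r(X_{r-},\mu_t)-H_r(X_s,\mu_t)]\,dY_r$ plus It\^{o} terms whose first slot is evaluated at $r-$, whereas \eqref{111} has every first slot evaluated at $t$ and no residual $G$- or $H$-terms. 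Since $\partial_xF(t,x,\mu_t)-\partial_xF(r-,x,\mu_t)=\int_{r}^{t}\partial_xG_u(x,\mu_t)\,du+\int_{r}^{t}\partial_xH_u(x,\mu_t)\,dY_u$ is not zero, converting $\int_s^t\partial_xF(r-,X_{r-},\mu_t)\,dX_r$ into $\int_s^t\partial_xF(t,X_{r-},\mu_t)\,dX_r$ (and likewise for the $\partial_{xx}$-bracket term and the jump sum) generates exactly such double integrals over the region $\{(r,u):s<r\le u\le t\}$, and showing that these absorb the leftover $G$- and $H$-differences is a genuine stochastic-Fubini/integration-by-parts computation that you do not carry out. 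Your side remark that ``$\partial_xF(r-,X_{r-},\mu_t)$ may be replaced by $\partial_xF(t,X_{r-},\mu_t)$ inside the bracket integrals'' because $\partial_xH$ is continuous is false for the same reason: the two integrands differ by $\int_{r}^{t}\partial_{xx}G_u\,du+\int_{r}^{t}\partial_{xx}H_u\,dY_u$, which does not vanish (the individual discrepancies only cancel in the aggregate, which is precisely what must be proved).

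The paper proceeds differently and more directly: it freezes the first slot at $t$ from the outset, writes $\Phi(X_u)=F(t,X_u,\mu_t)=F_0(X_u,\mu_t)+\int_0^tG_r(X_u,\mu_t)\,dr+\int_0^tH_r(X_u,\mu_t)\,dY_r$, and applies the It\^{o} expansion in the $x$-variable to each of the three pieces of this composition; the cross term $\int_s^t\partial_xH_r(X_{r-},\mu_t):d[X,Y]_r^c$ then arises from the $\int_0^tH\,dY$ piece, because the $dY$-integral defining the frozen field overlaps in time with the $dX$-integration over $[s,t]$. Your opening move (apply Theorem~\ref{standitosemimart} to $g=F(t,\cdot,\mu_t)$ as though it were a deterministic $C^2$ function) misses this term for precisely the reason you eventually identify --- $g$ is a random field built from a stochastic integral up to time $t$, not a fixed function --- and the proposal then records several mutually contradictory intermediate readings before deferring the decisive computation. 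As written, the argument has a genuine gap at its central step.
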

\begin{proof}
For $u\in[s,t]$, define $\Phi(X_u):=F(t,X_{u},\mu_{t})$ and note that
\begin{align*}
\Phi(X_u):=F(t,X_{u},\mu_{t})
=F_0(X_u,\mu_{t})+\int_{0}^{t}  \,G_r(X_u,\mu_{t})dr +\int_{0}^{t}  \,H_r(X_u,\mu_{t})dY_r 
\end{align*}
is a semimartingale. Then applying the standard It{\^o} formula to $\Phi(X_u)$ yields
\begin{align*}
\Phi(X_t)-\Phi(X_s)=&\,F(t,X_{t},\mu_{t})-F(t,X_{s},\mu_{t})\\
=&\,\int_{s}^{t}\partial_x F_0(X_{r-},\mu_{t})dX_r^c +\frac{1}{2}\int_{s}^{t}  \,\partial_{xx}F_0(X_{r-},\mu_{t}):d[X,X]^c_r\\
&+\int_{s}^{t}\partial_x G_r(X_{r-},\mu_{t}) \,dr dX_r^c +\frac{1}{2}\int_{s}^{t}  \,\partial_{xx}G_r(X_{r-},\mu_{t}) \,dr :d[X,X]^c_r\\
&+\int_{s}^{t}\partial_x H_r(X_{r-},\mu_{t}) \,dY_r^c dX_r^c +\frac{1}{2}\int_{s}^{t}  \,\partial_{xx}H_r(X_{r-},\mu_{t}) \,dY_r^c :d[X,X]^c_r\\
&+\int_{s}^{t}\partial_x H_r(X_{r-},\mu_{t}) :d[X,Y]^c_r+\sum_{s<r\leq t}\left( F(t,X_r,\mu_{t})-F(t,X_{r-},\mu_{t})\right),
\end{align*}
which coincides with \eqref{111}. \end{proof}
Now we sketch the proof of Corollary 1.\\
\textit{Proof of Corollary 1.} To use the argument of the chain rule, we fix an $m\in \mathbb{N}$ and define the partition $\pi_{s,t}^m:=\{s=t^m_0<\cdots<t^m_{m}=t\}$, such that, when $m\to\infty$,
\begin{equation*}
\max_{j}|t^m_j-t^m_{j+1}|\to 0,\ \ \ \max_j\sup_{r\in(t^m_j,t^m_{j+1}]}\left\lvert Y_{r-}-Y_{t^m_j}\right\rvert\to 0.
\end{equation*}
It is clear that
\begin{align}\label{C1 eq1}
F(t,X_t,\mu_t)-F(s,X_s,\mu_s)=&\sum_{j=0}^m\{F(t^m_{j+1},X_{t^m_{j+1}},\mu_{t^m_{j+1}})-F(t^m_j,X_{t^m_j},\mu_{t^m_j}) \}\nonumber\\=&\,\sum_{j=0}^m\{ F(t^m_{j+1},X_{t^m_{j+1}},\mu_{t^m_{j+1}})-F(t^m_{j+1},X_{t^m_j},\mu_{t^m_{j+1}})\}\nonumber\\&+\sum_{j=0}^m\{F(t^m_{j+1},X_{t^m_{j}},\mu_{t^m_{j+1}})-F(t^m_j,X_{t^m_j},\mu_{t^m_j})\}.
\end{align}
For the first term on the right hand side of \eqref{C1 eq1}, by Lemma \ref{lemma for C1}, it holds almost surely
\begin{equation*}\begin{split}
\sum_{j=0}^m&\big\{ F(t^m_{j+1},X_{t^m_{j+1}},\mu_{t^m_{j+1}})-F(t^m_{j+1},X_{t^m_j},\mu_{t^m_{j+1}})\big\}=\sum_{j=0}^m\Bigg\{ \int_{t^m_j}^{t^m_{j+1}}\partial_x F(t^m_{j+1},X_{r-},\mu_{t^m_{j+1}})dX_r\\&+\frac{1}{2}\int_{t^m_j}^{t^m_{j+1}} \partial_{xx}F(t^m_{j+1},X_{r-},\mu_{t^m_{j+1}}):d[X,X]^c_r+\int_{t^m_j}^{t^m_{j+1}}\,\partial_x H_{r}(X_{r-},\mu_{t^m_{j+1}}):d[X,Y]_r^c\\&+\sum_{t^m_j<r\leq t^m_{j+1}}\Big( F(t^m_{j+1},X_r,\mu_{t^m_{j+1}})-F(t^m_{j+1},X_{r-},\mu_{t^m_{j+1}})
-\partial_x F(t^m_{j+1},X_{r-},\mu_{t^m_{j+1}})\Delta X_r\Big)\Bigg\}.
\end{split}\end{equation*}
Then, let $m\to\infty$, we have
\begin{equation}\label{C1 eq2}\begin{split}
\lim_{m\to\infty}\sum_{j=0}^m&\{ F(t^m_{j+1},X_{t^m_{j+1}},\mu_{t^m_{j+1}})-F(t^m_{j+1},X_{t^m_j},\mu_{t^m_{j+1}})\}=\int_{s}^{t}\partial_x F(r,X_{r-},\mu_{r})dX_r\\&+\frac{1}{2}\int_{s}^{t} \partial_{xx}F(r,X_{r-},\mu_{r}):d[X,X]^c_r+\int_{s}^{t}\partial_x H_{r}(X_{r-},\mu_{r}):d[X,Y]_r^c\\&+\sum_{s<r\leq t}\Big( F(r,X_r,\mu_{r})-F(r,X_{r-},\mu_{r})-\partial_x F(r,X_{r-},\mu_{r})\Delta X_r\Big)\Bigg\}, \ \  a.s.
\end{split}\end{equation}
For the second term, by using Theorem \ref{itowentzellc1thm} and then taking limit, we get
\begin{align}\label{secondtermin22}
&\lim_{m\to\infty}\sum_{j=0}^m\{F(t^m_{j+1},X_{t^m_{j}},\mu_{t^m_{j+1}})-F(t^m_j,X_{t^m_j},\mu_{t^m_j})\}\nonumber\\=&\int_{s}^{t}  \,G_r(X_{r-},\mu_{r-})dr +\int_{s}^{t}  \,H_r(X_{r-},\mu_{r-})dY_r\nonumber\\
&+\tilde{\mathbb{E}} \left[\int_{s}^{t}  \,\partial_{\mu}F(r-,X_{r-},\mu_{r-},\tilde{X} _{r-}) d\tilde{X} _r+\frac{1}{2}\partial_x\partial_{\mu}F(r-,X_{r-},\mu_{r-},\tilde{X} _{r-}):d[\tilde{X} ,\tilde{X} ]_r^c \right]\nonumber\\
&+\sum_{s<r\leq t}\left\{F(r,X_{r-},\mu_r)-F(r-,X_{r-},\mu_{r-})-H_r(X_{r-},\mu_{r-})\Delta Y_r\right\}\nonumber\\
&+\tilde{\mathbb{E}} \bigg[ \sum_{s<r\leq t}\left\{ (\frac{\delta F}{\delta\mu}(r-,X_{r-},\mu_{r-},\tilde{X} _r)-\frac{\delta F}{\delta\mu}(r-,X_{r-},\mu_{r-},\tilde{X} _{r-}))\mathbf{1}_{\{\mu_r=\mu_{r-}\}}\right\}\nonumber\\
&\qquad-\sum_{s<r\leq t}\left\{ \partial_{\mu}F(r-,X_{r-},\mu_{r-},\tilde{X} _{r-})\Delta \overline{X} _r\right\}\bigg],\ \ a.s. 
\end{align}
The desired formula \eqref{eq coro1} is obtained by combining \eqref{C1 eq2} and \eqref{secondtermin22}.$\hspace{4cm}\Box$

\begin{remark}
The readers may observe that the chain rule can be obtained in another way:
\begin{align}\label{eq coro chain2}
F(t,X_t,\mu_t)-F(s,X_s,\mu_s)=&\sum_{j=1}^m\{F(t^m_{j+1},X_{t^m_{j+1}},\mu_{t^m_{j+1}})-F(t^m_j,X_{t^m_j},\mu_{t^m_j}) \}\nonumber\\
=&\sum_{j=0}^m\{ F(t^m_{j+1},X_{t^m_{j+1}},\mu_{t^m_{j+1}})-F(t^m_{j+1},X_{t^m_{j+1}},\mu_{t^m_{j}})\}\nonumber\\&
+\sum_{j=0}^m\{F(t^m_{j+1},X_{t^m_{j+1}},\mu_{t^m_{j}})-F(t^m_j,X_{t^m_j},\mu_{t^m_j})\}.
\end{align}
For the first term on the right hand side of \eqref{eq coro chain2}, due to Theorem 3.1 in \cite{guo2023ito}, the following relation holds almost surely
\begin{align*}
&F(t^m_{j+1},X_{t^m_{j+1}},\mu_{t^m_{j+1}})-F(t^m_{j+1},X_{t^m_{j+1}},\mu_{t^m_{j}})\\=&\,\tilde{\mathbb{E}} \bigg[\int_{t^m_j}^{t^m_{j+1}}  \,\partial_{\mu}F(t^m_{j+1},X_{t^m_{j+1}},\mu_{r-},\tilde{X}_{r-} )d\tilde{X}^c_r+\frac{1}{2}\int_{t^m_j}^{t^m_{j+1}}  \,\partial_x\partial_{\mu}F(t^m_{j+1},X_{t^m_{j+1}},\mu_{r-},\tilde{X}_{r-} ):d[\tilde{X},\tilde{X}  ]^c_r\\
&\hspace{0.5cm}+\sum_{t^m_j<r\leq t^m_{j+1}}\bigg\{ F(t^m_{j+1},X_{t^m_{j+1}},\mu_{r})-F(t^m_{j+1},X_{t^m_{j+1}},\mu_{r-})\\
&\hspace{2.9cm}+\Big(\frac{\delta F}{\delta\mu}(t^m_{j+1},X_{t^m_{j+1}},\mu_{r},\tilde{X}_r )-\frac{\delta F}{\delta\mu}(t^m_{j+1},X_{t^m_{j+1}},\mu_{r-},\tilde{X}_{r-} )\Big)\mathbf{1}_{\{\mu_r=\mu_{r-}\}} \bigg\}\bigg].
\end{align*}
Then, let $m\to\infty$, one has
\begin{equation*}
\begin{split}
&\lim_{m\to\infty}\sum_{j=0}^m\{ F(t^m_{j+1},X_{t^m_{j+1}},\mu_{t^m_{j+1}})-F(t^m_{j+1},X_{t^m_{j+1}},\mu_{t^m_{j}})\}\\
=&\,\tilde{\mathbb{E}} \bigg[  \int_{s}^{t}  \,\partial_{\mu}F(r-,X_r,\mu_{r-},\tilde{X} _{r-}) d\tilde{X} _r+\frac{1}{2}\partial_x\partial_{\mu}F(r-,X_r,\mu_{r-},\tilde{X} _{r-})d[\tilde{X} ,\tilde{X} ]_r^c \bigg]\\
&+\sum_{s<r\leq t}\left\{F(r,X_{r},\mu_r)-F(r-,X_{r},\mu_{r-})-H_r(X_{r},\mu_{r-})\Delta Y_r\right\}\nonumber\\
&+\tilde{\mathbb{E}} \bigg[ \sum_{s<r\leq t}\left\{ (\frac{\delta F}{\delta\mu}(r-,X_r,\mu_{r-},\tilde{X} _r)-\frac{\delta F}{\delta\mu}(r-,X_r,\mu_{r-},\tilde{X} _{r-}))\mathbf{1}_{\{\mu_r=\mu_{r-}\}}\right\} \\
&\qquad-\sum_{s<r\leq t}\{ \partial_{\mu}F(r-,X_r,\mu_{r-},\tilde{X} _{r-})\Delta \tilde{X} _r\} \bigg], \ \ \ a.s.
\end{split}
\end{equation*}
For the second term, apply Theorem \ref{standitowentzellsemimart} to $F(t,X_{t},\mu_{t^m_{j}})$ and obtain
\begin{align*}
F(t^m_{j+1}&,X_{t^m_{j+1}},\mu_{t^m_{j}})-F(t^m_j,X_{t^m_j},\mu_{t^m_j})=\int_{t_j^m}^{t^m_{j+1}}  \,G_r(X_{r-},\mu_{t^m_{j}})dr+\int_{t_j^m}^{t^m_{j+1}}  \,H_r(X_{r-},\mu_{t^m_j})dY^c_r\\
&+\frac{1}{2}\int_{t_j^m}^{t^m_{j+1}}  \,\partial_{xx}F(r-,X_{r-},\mu_{t^m_{j}}):d[X,X]^c_r+\int_{t_j^m}^{t^m_{j+1}}  \,\partial_x H_r(X_{r-},\mu_j^m):d[X,Y]^c_r\\
&+\int_{t_j^m}^{t^m_{j+1}}  \,\partial_x F(r-,X_{r-},\mu_{t^m_j})dx +\sum_{t^m_j<r\leq t^m_{j+1}}\big\{ F(r,X_r,\mu_{t^m_{j}})-F(r-,X_{r-},\mu_{t^m_{j}})\big\},  \ \ \ a.s.
\end{align*}
Taking $m\to\infty$ yields 
\begin{align*}
&\sum_{j=0}^m\{F(t^m_{j+1},X_{t^m_{j+1}},\mu_{t^m_{j}})-F(t^m_j,X_{t^m_j},\mu_{t^m_j})\}\\
\rightarrow&\int_{s}^{t}G_r(X_{r-},\mu_{r-})dr+\int_{s}^{t}  \,H_r(X_{r-},\mu_{r-})dY^c_r+\sum_{i=1}^d\int_{s}^{t}  \,\partial_{x_i}F(r-,X_{r-},\mu_{r-})dX_r^i\\&+\frac{1}{2}\sum_{i,j=1}^d\int_{s}^{t}  \partial_{x_i x_j}F(r-,X_{r-},\mu_{r-}):d[X^i,X^j]^c_r 
+\sum_{i=1}^d\int_{s}^{t}  \,\partial_{x_i}H_r(X_{r-},\mu_{r-}):d[X,Y]^c_r\\&
+\sum_{s<r\leq t}\left\{ F(r,X_r,\mu_{r-})-F(r,X_{r-},\mu_{r-})-\sum_{i=1}^d\partial_{x_i}F(r-,X_{r-},\mu_{r})\Delta X_r \right\},\ \ \ a.s.
\end{align*}
Combining the above convergence results, we derive the following formula
\begin{align}\label{eq coro1 alter}
&F(t,X_t,\mu_t)-F(s,X_s,\mu_s)=\int_{s}^{t}  \,G_r(X_{r-},\mu_{r-})dr +\int_{s}^{t}  \,H_r(X_{r-},\mu_{r-})dY_r\nonumber\\
&+\tilde{\mathbb{E}} \left[  \int_{s}^{t}  \,\partial_{\mu}F(r-,X_{r},\mu_{r-},\tilde{X} _{r-}) d\tilde{X} _r+\frac{1}{2}\partial_x\partial_{\mu}F(r-,X_{r},\mu_{r-},\tilde{X} _{r-}):d[\tilde{X} ,\tilde{X} ]_r^c \right]\nonumber\\
&+\sum_{s<r\leq t}\left\{F(r,X_{r},\mu_r)-F(r-,X_{r},\mu_{r-})-H_r(X_{r},\mu_{r-})\Delta Y_r\right\}\nonumber\\
&+\tilde{\mathbb{E}} \bigg[ \sum_{s<r\leq t}\left\{ (\frac{\delta F}{\delta\mu}(r-,X_{r},\mu_{r-},\tilde{X} _r)-\frac{\delta F}{\delta\mu}(r-,X_{r},\mu_{r-},\tilde{X} _{r-}))\mathbf{1}_{\{\mu_r=\mu_{r-}\}}\right\}\\
&\qquad-\sum_{s<r\leq t}\left\{ \partial_{\mu}F(r-,X_r,\mu_{r-},\tilde{X} _{r-})\Delta \tilde{X} _r\right\}\bigg] \nonumber\\
&+\int_{s}^{t}  \,\partial_{x}F(r-,X_{r-},\mu_{r})dX_r+\frac{1}{2}\int_{s}^{t}  \,\partial_{xx}F(r-,X_{r-},\mu_{r}):d[X,X]^c_r 
\nonumber\\&+\int_{s}^{t}  \,\partial_{x}H_r(X_{r-},\mu_{r-}):d[X,Y]^c_r\nonumber\\&+
\sum_{s<r\leq t}\left\{ F(r,X_r,\mu_{r-})-F(r,X_{r-},\mu_{r-})-\sum_{i=1}^d\partial_{x_i}F(r-,X_{r-},\mu_{r})\Delta X_r \right\},\ \ \ a.s.\nonumber
\end{align}
So far we obtain two It\^o-Wentzell-Lions formulae \eqref{eq coro1} and \eqref{eq coro1 alter}, whose difference stems from the choice of the middle term in \eqref{C1 eq1} and \eqref{eq coro chain2}. In particular, when the semimartingales $X$ and $Y$ are continuous, \eqref{eq coro1} and \eqref{eq coro1 alter} are the same and coincide with Theorem 3.9 in \cite{dos2023ito}.
\end{remark}
In the case of conditional measures, we can obtain a similar extension of It{\^o}-Wentzell-Lions formula by the same argument. 
\begin{definition}
$F$ is said to be RF-Fully-${C}^{2,2}(\mathbb{R}^d\times\mathcal{P}_2(\mathbb{R}^d))$ if\\
(i) $F(\cdot, x,\cdot)$ is $RF-Fully-\cC^2$ for any $x\in \mathbb{R}^d$;\\
(ii) $\partial_x F(t,x,\mu)$ and $\partial_{xx}F(t,x,\mu)$ exist for any $(t,x,\mu)$, moreover, $\partial_xF$ and $\partial_{xx}F$ are jointly continuous w.r.t. $(t,x,\mu)$.
\end{definition}
\begin{corollary}
Given semimartingales $X$, $Y$ satisfying Assumption 1 and a sub-filtration $\mathbb{G}:=(\mathscr{G}_t)_{t\in [0,T]}\subset\mathbb{F}$ satisfying Assumption \ref{filtrationassumption}, denote $\mu_t:=\mathbb{P}_{X_t|\mathscr{G}_t}$. 
Then, for any RF-Fully-${C}^{2,2}(\mathbb{R}^d\times\mathcal{P}_2(\mathbb{R}^d))$ function $F$ satisfying Assumption 2 and 4, it holds almost surely for any $0\leq s\leq t\leq T$ that 
\begin{align*}
&F(t,X_t,\mu_t)-F(s,X_s,\mu_s)=\int_{s}^{t}G_r(X_{r-},\mu_{r-})dr+\int_{s}^{t}H_r(X_{r-},\mu_{r-})dY_r \\&+\sum_{s<r\leq t}\left\{ F(r,X_{r-},\mu_r)-F(r-,X_{r-},\mu_{r-})- H_r(X_{r-},\mu_{r-})\Delta Y_r\right\}\\
&+\int_{s}^{t}\partial_{x}F(r,X_{r-},\mu_{r})dX_r+\frac{1}{2}\int_{s}^{t}\partial_{xx}F(r,X_{r-},\mu_{r}):d[X,X]^c_r 
+\int_{s}^{t}\partial_{x}H_r(X_{r-},\mu_{r}):d[X,Y]^c_r\\&
+\sum_{s<r\leq t}\left\{ F(r,X_r,\mu_r)-F(r,X_{r-},\mu_{r})-\partial_{x}F(r,X_{r-},\mu_{r})\Delta X_r \right\}.\\
&+\overline{\mathbb{E}}\bigg[\int_{s}^{t}\partial_{\mu}F(r-,X_{r-},\mu_{r-},X'_{r-})dX'_r+\frac{1}{2}\partial_{\mu}\partial_x F(r-,X_{r-},\mu_{r-},X'_{r-}):d[X',X']^c_r \\
&\quad+\frac{1}{2}\int_{s}^{t}\partial_{\mu\mu}F(r-,X_{r-},\mu_{r-},X'_{r-},X''_{r-}):d[X',X'']^c_r+\int_{s}^{t}\partial_{\mu}H_r(X_{r-},\mu_{r-},X'_{r-}):d[X',Y]^c_r\\
&\quad+\sum_{s<r\leq t}\bigg( \frac{1}{2} \big( \frac{\delta^2F}{\delta\mu^2}(r-,X_{r-},\mu_{r-},X'_r,X''_r)-\frac{\delta^2F}{\delta\mu^2}(r-,X_{r-},\mu_{r-},X'_{r-},X''_r)  \\
&\qquad\qquad-\frac{\delta^2F}{\delta\mu^2}(r-,X_{r-},\mu_{r-},X'_r,X''_{r-})+\frac{\delta^2F}{\delta\mu^2}(r-,X_{r-},\mu_{r-},X'_{r-},X''_{r-})\big)\\
&\qquad+(\frac{\delta H_r}{\delta\mu}(X_{r-},\mu_{r-},X'_r)-\frac{\delta H_r}{\delta\mu}(X_{r-},\mu_{r-},X'_{r-}))\Delta Y_r\\
&\qquad+\frac{\delta F}{\delta\mu}(r-,X_{r-},\mu_{r-},X'_r)-\frac{\delta F}{\delta\mu}(r-,X_{r-},\mu_{r-},X'_{r-})\bigg)\mathbf{1}_{\{\mu_r=\mu_{r-}\}}\\
&\quad-\partial_{\mu}F(r-,X_{r-},\mu_{r-},X'_{r-})\Delta X'_r  \Big|\mathscr{F}\bigg].
\end{align*}
\end{corollary}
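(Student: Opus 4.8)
\emph{Proof proposal.} The plan is to obtain this identity by exactly the chain-rule (partition) argument used to prove Corollary~\ref{corollary1}, but feeding in the conditional It{\^o}-Wentzell-Lions formula of Theorem~\ref{itowentzellc2thm} in place of Theorem~\ref{itowentzellc1thm}. Nothing structurally new happens: the only extra terms relative to the unconditional corollary are the $\partial_{\mu\mu}F:d[X',X'']^c$ integral and the second-order flat-derivative jump sum, and both are already packaged inside \eqref{itowentzellc2}, so they will simply be carried along the limiting procedure. I would state at the outset that the $\mu$-derivative bounds in the definition of $RF$-Fully-$C^{2,2}(\mathbb{R}^d\times\mathcal{P}_2(\mathbb{R}^d))$ and in Assumption~\ref{assumption h bounded} are understood to hold uniformly in the extra spatial variable, which is what makes the partially frozen fields below satisfy the hypotheses of Theorem~\ref{itowentzellc2thm}.

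First I would fix $m\in\mathbb{N}$ and a partition $\pi^m_{s,t}=\{s=t^m_0<\cdots<t^m_m=t\}$ with $\max_j|t^m_{j+1}-t^m_j|\to0$ and $\max_j\sup_{r\in(t^m_j,t^m_{j+1}]}|Y_{r-}-Y_{t^m_j}|\to0$ as $m\to\infty$, and telescope as in \eqref{C1 eq1}:
\begin{align*}
F(t,X_t,\mu_t)-F(s,X_s,\mu_s)=&\sum_{j}\big(F(t^m_{j+1},X_{t^m_{j+1}},\mu_{t^m_{j+1}})-F(t^m_{j+1},X_{t^m_{j}},\mu_{t^m_{j+1}})\big)\\
&+\sum_{j}\big(F(t^m_{j+1},X_{t^m_{j}},\mu_{t^m_{j+1}})-F(t^m_{j},X_{t^m_{j}},\mu_{t^m_{j}})\big).
\end{align*}
(This particular bracketing is the one matching the stated formula; the alternative bracketing, as in the remark following Corollary~\ref{corollary1}, gives a variant that coincides with it when $X,Y$ are continuous.) In the first sum the measure argument is frozen at $\mu_{t^m_{j+1}}$, and $u\mapsto F(t^m_{j+1},X_u,\mu_{t^m_{j+1}})$ is a semimartingale; applying the conditional-measure analogue of Lemma~\ref{lemma for C1} on each $[t^m_j,t^m_{j+1}]$ (its proof uses only the standard It{\^o} formula, Theorem~\ref{standitosemimart}, and is insensitive to whether $\mu$ is a law or a conditional law) and letting $m\to\infty$ produces the spatial block: the integrals $\int_s^t\partial_xF(r,X_{r-},\mu_r)\,dX_r$, $\tfrac12\int_s^t\partial_{xx}F(r,X_{r-},\mu_r):d[X,X]^c_r$, $\int_s^t\partial_xH_r(X_{r-},\mu_r):d[X,Y]^c_r$, together with the jump correction $\sum_{s<r\le t}\{F(r,X_r,\mu_r)-F(r,X_{r-},\mu_r)-\partial_xF(r,X_{r-},\mu_r)\Delta X_r\}$.

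In the second sum the spatial argument is frozen at $X_{t^m_j}$, which is $\mathscr{F}_{t^m_j}$-measurable; hence on $[t^m_j,t^m_{j+1}]$ the random field $(\tau,\mu)\mapsto F(\tau,X_{t^m_j},\mu)$ is $RF$-Fully-$\mathcal{C}^2$, remains adapted, and inherits Assumption~2 and Assumption~\ref{assumption h bounded}, so Theorem~\ref{itowentzellc2thm} applies there. Summing over $j$ and passing to the limit (noting $X_{t^m_j}\to X_{r-}$ and $\mu_{t^m_{j+1}}\to\mu_{r-}$ against the relevant integrators) yields the $G\,dr$ and $H\,dY$ terms, the $\mu$-jump sum $\sum_{s<r\le t}\{F(r,X_{r-},\mu_r)-F(r-,X_{r-},\mu_{r-})-H_r(X_{r-},\mu_{r-})\Delta Y_r\}$, and the whole $\overline{\mathbb{E}}[\,\cdot\,|\mathscr{F}]$ block ($\partial_\mu F\,dX'$, $\tfrac12\partial_\mu\partial_xF:d[X',X']^c$, $\tfrac12\partial_{\mu\mu}F:d[X',X'']^c$, $\partial_\mu H_r:d[X',Y]^c$, the first- and second-order flat-derivative jump sums, the $\tfrac{\delta H_r}{\delta\mu}\Delta Y_r$ jump term, and the compensator $-\partial_\mu F(r-,X_{r-},\mu_{r-},X'_{r-})\Delta X'_r$), all with the extra variable evaluated at $X_{r-}$. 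Adding the two limits gives the stated formula.

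The main obstacle is the limit passage $m\to\infty$. One must check that the Riemann-type sums of all the stochastic and quadratic-variation integrals converge; this follows from the joint continuity of $\partial_xF,\partial_{xx}F,\partial_\mu F,\partial_x\partial_\mu F,\partial_{\mu\mu}F$ in $(t,x,\mu)$ (and the analogous regularity of $G,H$), combined with dominated convergence for stochastic integrals, using Assumption~\ref{assumptionX} and the integrability bounds recorded after Definition~\ref{def RF-P-C2}. One must also check that the countable jump sums converge in the limit, which again rests on $\mathbb{E}[\sum_{0<t\le T}|\Delta X_t|]<\infty$, $\mathbb{E}[\sum_{0<t\le T}|\Delta Y_t|]<\infty$ and on the mean-value representations of the flat-derivative jump increments (as in Remark~3.2 of \cite{guo2023ito} and in the proof of Theorem~\ref{itowentzellc1thm}). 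A secondary point is the measurability bookkeeping in applying Theorem~\ref{itowentzellc2thm} with the frozen random point $X_{t^m_j}$: this is handled exactly as in the proof of Corollary~\ref{corollary1}, treating $X_{t^m_j}$ as a parameter and using that freezing at the left endpoint keeps the integrands adapted on $[t^m_j,t^m_{j+1}]$. Since all of these steps are routine given the machinery of Sections 3 and 4, I would present only a sketch, omitting the repetitive estimates.
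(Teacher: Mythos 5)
Your proposal is correct and follows exactly the route the paper intends: the paper proves Corollary 2 by remarking that it follows "by the same argument" as Corollary 1, i.e., the telescoping/chain-rule decomposition with the frozen-measure increments handled by the standard It\^o formula (Lemma \ref{lemma for C1}) and the frozen-space increments handled by Theorem \ref{itowentzellc2thm} in place of Theorem \ref{itowentzellc1thm}. Your additional remarks on the uniformity of the $\mu$-derivative bounds in the extra spatial variable and on the adaptedness of the field with $x$ frozen at the left endpoint $X_{t^m_j}$ are sensible elaborations of points the paper leaves implicit.
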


\subsection{Case of Poisson random measure}
In this subsection, we give the specific It{\^o}-Wentzell-Lions formula in the case of Poisson random measures, i.e., the random field is driven by Poisson random measure and so is the state process.
Denote $(E,\mathcal{E})$ a measurable space and $N(ds,de)$ an $\mathbb{F}$-adapted Poisson random measure on $([0,T]\times E,\mathcal{B}([0,T])\otimes\mathcal{E})$. 
Moreover, denote $\nu$ the finite intensity measure of $N$.
The compensated Poisson random measure is defined by $\widetilde{N}(ds,de):=N(ds,de)-\nu(de)ds$ (see \cite{applebaum2009levy,rudiger2004stochastic}, for instance). Let $b:=(b_t)_{t\in [0,T]}$ and $\sigma:=(\sigma_t)_{t\in [0,T]}$ be $\mathbb{R}^d$ and $\mathbb{R}^{d\times d}$ valued $\mathbb{F}$-adapted processes, respectively, and $\beta(e):=(\beta_t(e))_{t\in[0,T]}$ be $\mathbb{R}^d$-valued $\mathbb{F}$-predictable process for any $e\in E$. Consider the following state process driven by jump-diffusion process,
\begin{eqnarray}\label{poissonjump}
dX_t=b_t dt+\sigma_t dW_t+\int_{E}\,\beta_t(e)N(dt,de),
\end{eqnarray}
where $W$ is a standard Brownian motion independent of $N$.
It is easy to check that $(X_t)_{t\in [0,T]}$ satisfies Assumption 1 if
\begin{eqnarray*}
\mathbb{E}\left[ \int_{0}^{T}  \,\left(\left\lvert b_s\right\rvert^2+\left\lvert \sigma_s\right\rvert^2+\int_{E} \,\left\lvert \beta_s\right\rvert^2 \nu(de) \right)  ds  \right]<\infty.
\end{eqnarray*}
It is well known that the measure-valued process $(\mathbb{P}_{X_t})_{t\in[0,T]}$ is continuous in time (see \cite{gaviraghi2017theoretical}).
\begin{corollary}\label{specific-full-poisson}
Given $X$ defined in \eqref{poissonjump} and RF-Partially-$\mathcal{C}^2$ function $F:\Omega\times[0,T]\times\mathcal{P}_2(\mathbb{R}^d)\rightarrow\mathbb{R}$ admitting the following expression
\begin{eqnarray*}
F(t,\mu)=F_0(\mu)+\int_{0}^{t}  \,G_s(\mu)ds +\int_{0}^{t}  \,H_s(\mu)dW_s+\int_{0}^{t}  \,\int_{E} \,J_s(e,\mu)N(de,ds). 
\end{eqnarray*}
Denote $\mu_t:=\mathbb{P}_{X_t}$, then, for any $0\leq s\leq t\leq T$, we have
\begin{align*}
&F(t,\mu_t)-F(s,\mu_s)=\int_{s}^{t}  \,G_r(\mu_{r-})dr+\int_{s}^{t}  \,H_r(\mu_{r-})dW_r+\int_{s}^{t}  \,\int_{E} \,J_r(e,\mu_r)\nu(de,dr) \\
&\qquad+\tilde{\mathbb{E}}\bigg[ \int_{s}^{t}  \,(\partial_{\mu}F(r-,\mu_{r-},\tilde{X}_{r-})b_r+\frac{1}{2}\partial_x\partial_{\mu}F(r-,\mu_{r-},\tilde{X}_{r-}):\sigma_r\sigma_r^{\intercal  })dr \\
&\qquad\qquad+\int_{s}^{t}  \,\int_{E} \,\frac{\delta F}{\delta\mu}(r-,\mu_r,\tilde{X}_r+\beta_r(e))-\frac{\delta F}{\delta\mu}(r-,\mu_r,\tilde{X}_r)\nu(de,dr)  \bigg],  \ \ \ \mathbb{P}-a.s.
\end{align*}
\end{corollary}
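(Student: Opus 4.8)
\textit{Proof of Corollary \ref{specific-full-poisson} (sketch).} The plan is to specialize the general It\^o--Wentzell--Lions formula \eqref{itowentzellc1}; equivalently, to re-run the three-step argument of Theorem \ref{itowentzellc1thm} with the classical It\^o--Wentzell formula for random fields driven jointly by a Brownian motion and a Poisson random measure (in the sense of Kunita) playing the role of Theorem \ref{standitowentzellsemimart}. First I would record the data. The process $X$ of \eqref{poissonjump} is a semimartingale whose continuous local-martingale part is $M^X_t=\int_0^t\sigma_s\,dW_s$, so $[X,X]^c_t=\int_0^t\sigma_s\sigma_s^{\intercal}\,ds$, and whose finite-variation part is $\int_0^t b_s\,ds+\int_0^t\int_E\beta_s(e)\,N(de,ds)$, with jumps $\Delta X_r=\int_E\beta_r(e)\,N(\{r\},de)$; the stated integrability of $(b,\sigma,\beta)$ gives Assumption \ref{assumptionX}, and since $\nu(E)<\infty$ the measure $N$ has a.s.\ finitely many atoms on $[0,T]$. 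Writing $N(de,ds)=\widetilde N(de,ds)+\nu(de)\,ds$, the field $F$ has continuous-in-$t$ parts $F_0(\mu)+\int_0^t\big(G_s(\mu)+\int_E J_s(e,\mu)\,\nu(de)\big)ds+\int_0^t H_s(\mu)\,dW_s$ together with the purely discontinuous martingale $\int_0^t\int_E J_s(e,\mu)\,\widetilde N(de,ds)$; for a general intensity space $E$ this last term cannot be packaged as the single integral $\int H_s(\mu)\,dY_s$ of \eqref{decompositionFmu}, which is why the formula is derived at the level of the proof rather than by a black-box invocation of Theorem \ref{itowentzellc1thm}.

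For the cylindrical step, assume $F_0,G,H,J$ are cylindrical in $\mu$ and put $Z^j_t:=\langle\eta_j,\mathbb{P}_{X_t}\rangle$, $\mathbf Z_t:=(Z^1_t,\dots,Z^N_t)$. Because $(\mathbb{P}_{X_t})_{t\in[0,T]}$ is continuous in time, $\mathbf Z$ is a continuous finite-variation process, so the brackets $[\mathbf Z,\mathbf Z]^c$, $[\mathbf Z,W]^c$ and $[\mathbf Z,\widetilde N]$ all vanish (the analogues of $I_2,I_3$ in \eqref{itoforf} drop out) and the indicators $\mathbf{1}_{\mu_r=\mu_{r-}}$ equal $1$. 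Applying the Wiener--Poisson It\^o--Wentzell formula to $f(t,\mathbf Z_t)$ and the standard It\^o formula (Theorem \ref{standitosemimart}) to $\eta_j(X_t)$, taking $\mathbb{E}[\cdot]$ (so that $dZ^j_s$ acquires a compensator term $\int_E\{\eta_j(X_s+\beta_s(e))-\eta_j(X_s)\}\nu(de)\,ds$ from the jumps), and using Fubini and stochastic Fubini on the twin space $\tilde\Omega$ to interchange $\tilde{\mathbb{E}}$ with the time, $d\tilde W$ and $N$-integrals, one obtains the formula for cylindrical $F$ with bounded $X$; the identities $\sum_j\partial_{z_j}f\,\partial_x\eta_j=\partial_\mu F$, $\sum_j\partial_{z_j}f\,\partial_{xx}\eta_j=\partial_x\partial_\mu F$, $\sum_j\partial_{z_j}f\,\eta_j=\tfrac{\delta F}{\delta\mu}$ (cf.\ \eqref{interchangederivativewithitointegral}--\eqref{directcalculus}) convert the $\eta_j$-expansion into $\mu$-derivatives. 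Localization by $\tau_n:=\inf\{t:|X_t|\ge n\}$ (with $\tau_n\to T$), followed by cylindrical approximation of $F_0,G,H,J$ via Proposition \ref{cylindricalapproximationprop} together with the dominated-convergence and It\^o-isometry estimates of Steps 2--3 of the proof of Theorem \ref{itowentzellc1thm}, then removes the boundedness and cylindricity restrictions.

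It remains to rewrite the resulting expression explicitly. The diffusive pieces are immediate from $d[\tilde X,\tilde X]^c_r=\tilde\sigma_r\tilde\sigma_r^{\intercal}\,dr$. Splitting $d\tilde X_r=\tilde b_r\,dr+\tilde\sigma_r\,d\tilde W_r+\int_E\tilde\beta_r(e)\,N(de,dr)$, the combination $\int_s^t\partial_\mu F\,d\tilde X_r-\sum_{s<r\le t}\partial_\mu F\,\Delta\tilde X_r=\int_s^t\partial_\mu F\,d\tilde X^c_r=\int_s^t\partial_\mu F(\tilde b_r\,dr+\tilde\sigma_r\,d\tilde W_r)$, and the $d\tilde W$-integral has zero $\tilde{\mathbb{E}}$, leaving $\tilde{\mathbb{E}}[\int_s^t\partial_\mu F\,b_r\,dr]$; the jump sum $\tilde{\mathbb{E}}\big[\sum(\tfrac{\delta F}{\delta\mu}(r-,\mu_{r-},\tilde X_r)-\tfrac{\delta F}{\delta\mu}(r-,\mu_{r-},\tilde X_{r-}))\big]$, written as an $N$-integral via $\tilde X_r=\tilde X_{r-}+\tilde\beta_r(e)$ at an atom with mark $e$, becomes under $\tilde{\mathbb{E}}$ the compensated term $\tilde{\mathbb{E}}[\int_s^t\int_E(\tfrac{\delta F}{\delta\mu}(r-,\mu_r,\tilde X_r+\beta_r(e))-\tfrac{\delta F}{\delta\mu}(r-,\mu_r,\tilde X_r))\,\nu(de,dr)]$, the $\widetilde N$-part vanishing in mean; the explicit Poisson-driving term of $F$ and the time-continuity of $\mathbb{P}_{X_\cdot}$ (which kills the $\mu$-jump contributions) account for the remaining Poisson-measure integral, while $\int_s^t G_r(\mu_{r-})\,dr$ and $\int_s^t H_r(\mu_{r-})\,dW_r$ are carried over unchanged. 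Collecting the terms gives the identity in Corollary \ref{specific-full-poisson}. The main obstacle is precisely this jump-term bookkeeping: matching the abstract jump sums of \eqref{itowentzellc1} with Poisson-measure integrals, tracking which pieces cancel and which survive after replacing $N$ by $\widetilde N+\nu\,ds$, and justifying the exchange of $\tilde{\mathbb{E}}$ with the stochastic and Poisson integrals on the twin space; everything else is routine once the proof of Theorem \ref{itowentzellc1thm} is in place. $\Box$
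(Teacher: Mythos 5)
Your endgame computation—absorbing the jump sums into $d\tilde X^c$, killing the $d\tilde W$-martingale part under $\tilde{\mathbb{E}}$ to leave the $b_r\,dr$ and $\frac12\partial_x\partial_\mu F:\sigma_r\sigma_r^{\intercal}\,dr$ terms, using continuity of $\mathbb{P}_{X_\cdot}$ so that $\mathbf{1}_{\mu_r=\mu_{r-}}\equiv 1$, and converting the $\frac{\delta F}{\delta\mu}$ jump sum into a $\nu(de)\,dr$ integral via $\tilde X_r=\tilde X_{r-}+\tilde\beta_r(e)$ at atoms of $N$—is exactly the paper's proof. Where you diverge is upstream: the paper obtains the general identity by a one-line invocation of Theorem \ref{itowentzellc1thm} applied to $F(t,\mu_t)$ and then simplifies, whereas you insist on re-running the whole cylindrical/localization/approximation machinery with a Wiener--Poisson It\^o--Wentzell formula in place of Theorem \ref{standitowentzellsemimart}, on the grounds that $\int_0^t\int_E J_s(e,\mu)N(de,ds)$ over a general mark space $E$ is not literally of the form $\int_0^t H_s(\mu)\,dY_s$ for a finite-dimensional semimartingale $Y$ as required by \eqref{decompositionFmu}. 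That observation is legitimate and your longer route closes a gap the paper leaves implicit; the paper's route buys brevity at the cost of silently assuming Theorem \ref{itowentzellc1thm} extends to Poisson-measure-driven fields. One small point of bookkeeping to settle in your write-up: the term coming from $\sum_{s<r\le t}\{F(r,\mu_r)-F(r-,\mu_r)\}$ sits \emph{outside} any expectation, so it is an integral against $N(de,dr)$ rather than its compensator, and only the $\frac{\delta F}{\delta\mu}$ jump sum, which lives under $\tilde{\mathbb{E}}$, compensates to $\nu(de)\,dr$; be explicit about which measure appears where, since your phrase ``the remaining Poisson-measure integral'' leaves this ambiguous.
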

\begin{proof}
Apply Theorem \ref{itowentzellc1thm} to $F(t,\mu_t)$ and rewrite it as
\begin{align*}
F(t,\mu_t)-F(s,\mu_s)&=\int_{s}^{t}G_r(\mu_{r-})dr +\int_{s}^{t}H_r(\mu_{r-})dY^c_r+\sum_{s<r\leq t}\left\{F(r,\mu_r)-F(r-,\mu_{r-})\right\}\\
&+\tilde{\mathbb{E}}\left[\int_{s}^{t}\partial_{\mu}F(r-,\mu_{r-},\tilde{X}_{r-}) d\tilde{X}^c_r+\frac{1}{2}\partial_x\partial_{\mu}F(r-,\mu_{r-},\tilde{X}_{r-}):d[\tilde{X},\tilde{X}]_r^c \right]\\
&+\tilde{\mathbb{E}}\left[ \sum_{s<r\leq t}\left\{ (\frac{\delta F}{\delta\mu}(r-,\mu_{r-},\tilde{X}_r)-\frac{\delta F}{\delta\mu}(r-,\mu_{r-},\tilde{X}_{r-}))\mathbf{1}_{\{\mu_r=\mu_{r-}\}}\right\} \right], \ \ \ a.s.
\end{align*}
By the boundedness of $\partial_{\mu}F$, we can get rid of the martingale part of $\tilde{X}^c$, that is
\begin{eqnarray*}&&
\tilde{\mathbb{E}}\left[\int_{s}^{t}\partial_{\mu}F(r-,\mu_{r-},\tilde{X}_{r-}) d\tilde{X}^c_r+\frac{1}{2}\partial_x\partial_{\mu}F(r-,\mu_{r-},\tilde{X}_{r-}):d[\tilde{X},\tilde{X}]_r^c \right]\\&&
=\tilde{\mathbb{E}}\bigg[ \int_{s}^{t}\Big(\partial_{\mu}F(r-,\mu_{r-},\tilde{X}_{r-})b_r+\frac{1}{2}\partial_x\partial_{\mu}F(r-,\mu_{r-},\tilde{X}_{r-}):\sigma_r\sigma_r^{\intercal  }\Big)dr\bigg].
\end{eqnarray*}
Then, note that at time $r$, $F(r-,\mu_r)$ and $X_{r-}$ have jumps $J_r(e,\mu_r)$ and $\beta_r(e)$ with Poisson random measure $N(de,dr)$, respectively. Thus
$$\sum_{s<r\leq t}\left\{F(r,\mu_r)-F(r-,\mu_{r-})\right\}=\sum_{s<r\leq t}\left\{F(r,\mu_r)-F(r-,\mu_{r})\right\}=\int_{s}^{t}\int_{E}J_r(e,\mu_r)\nu(de,dr)$$
and
\begin{eqnarray*}&&
\tilde{\mathbb{E}}\bigg[ \sum_{s<r\leq t}\left\{ (\frac{\delta F}{\delta\mu}(r-,\mu_{r-},\tilde{X}_r)-\frac{\delta F}{\delta\mu}(r-,\mu_{r-},\tilde{X}_{r-}))\mathbf{1}_{\{\mu_r=\mu_{r-}\}}\right\} \bigg]\\&&
=\tilde{\mathbb{E}}\left[ \int_{s}^{t}\int_{E} \,\left(\frac{\delta F}{\delta\mu}(r-,\mu_r,\tilde{X}_r+\beta_r(e))-\frac{\delta F}{\delta\mu}(r-,\mu_r,\tilde{X}_r)\right)\nu(de,dr)\right].
\end{eqnarray*}
The proof is complete.
\end{proof}
Finally, we give the specific It\^o-Wentzell-Lions formula for the flows of conditional measures in the case of Poisson random measures.
\begin{corollary}\label{specific-conditional-poisson}
Consider a semimartingale $X$ defined in \eqref{poissonjump} and a RF-Partially-$\mathcal{C}^2$ function $F:\Omega\times[0,T]\times\mathcal{P}_2(\mathbb{R}^d)\rightarrow\mathbb{R}$ satisfying Assumption 4. Given a sub-$\sigma$-algebra $\mathbb{G}:=(\mathscr{G}_t)_{t\in[0,T]}\subset\mathbb{F}$ satisfying Assumption 3, denote $\mu_t:=\mathbb{P}_{X_t|\mathscr{G}_t}$. Let the enlarged space $(\overline{\Omega},\overline{\mathscr{F}},\overline{\mathbb{F}},\overline{\mathbb{P}})$ be the same as in Section 4.
Then, for any $0\leq s\leq t\leq T$, we have
\begin{align*}
&F(t,\mu_t)-F(s,\mu_s)=\int_{s}^{t}  \,G_r(\mu_{r-})dr+\int_{s}^{t}  \,H_r(\mu_{r-})dW_r+\int_{s}^{t}  \,\int_{E} \,J_r(e,\mu_r)\nu(de,dr) \\
&+\mathbb{\overline{E}}\Bigg[ \int_{s}^{t}  \,(\partial_{\mu}F(r-,\mu_{r-},X_{r-}')b'_r+\frac{1}{2}\partial_x\partial_{\mu}F(r-,\mu_{r-},X_{r-}'):\sigma_r'\sigma_r^{'\intercal})dr\\
&\qquad+\int_{s}^{t}  \,\frac{1}{2}\partial_{\mu}\partial_{\mu}F(r-,\mu_{r-},X_{r-}',X_{r-}'')\sigma_r'\sigma_r^{''\intercal  }:d[W',W'']_r\\
&\qquad+\int_{s}^{t}  \,\partial_{\mu}F(r-,\mu_{r-},X_{r-}')\sigma_r'dW_r'+\int_{s}^{t}\,\partial_{\mu}H_r(\mu_r,X_{r-}')\sigma_r':d[Y,W']_r\\
&\qquad+\int_{s}^{t}  \,\int_{E} \,\bigg( \frac{1}{2}(\frac{\delta^2F}{\delta\mu^2}(r-,\mu_r,X_r'+\beta_r'(e),X_r''+\beta_r''(e))\\
&\qquad\qquad-\frac{\delta^2F}{\delta\mu^2}(r-,\mu_r,X_r',X_r''+\beta_r''(e))-\frac{\delta^2F}{\delta\mu^2}(r-,\mu_r,X_r'+\beta_r'(e),X_r'') )\\
&\qquad\qquad+\frac{\delta^2F}{\delta\mu^2}(r-,\mu_r,X_r',X_r'')) +(\frac{\delta H_r}{\delta\mu}(\mu_r,X'_r+\beta_r'(e))-\frac{\delta H_r}{\delta\mu}(\mu_r,X'_r))J_r(e,\mu_r)\\
&\qquad\qquad+\frac{\delta F}{\delta\mu}(r-,\mu_r,X_r'+\beta'_r(e))- \frac{\delta F}{\delta\mu}(r-,\mu_r,X_r')\bigg)\nu(de,dr)\Big| \mathscr{F}\Bigg],\ \ \ \mathbb{P}-a.s.
\end{align*}
\end{corollary}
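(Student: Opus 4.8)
The plan is to obtain \eqref{itowentzellc2} in the Poisson form directly from Theorem \ref{itowentzellc2thm} by specialising each of its terms to the data of \eqref{poissonjump}, exactly the way Corollary \ref{specific-full-poisson} specialises Theorem \ref{itowentzellc1thm} in the unconditional case. After checking the hypotheses (Assumption 1 for $X$ holds under the stated integrability of $b,\sigma,\beta$; Assumptions 3 and 4 are given), I would first record the semimartingale structure: from \eqref{poissonjump} the continuous local martingale part of $X$ is $M_t=\int_0^t\sigma_s\,dW_s$, the finite variation part is $V_t=\int_0^t b_s\,ds+\int_0^t\int_E\beta_s(e)N(de,ds)$, hence $[X,X]^c_t=\int_0^t\sigma_s\sigma_s^{\intercal}\,ds$, $V^c_t=\int_0^t b_s\,ds$, and a jump of $X$ at a jump time $r$ of $N$ with mark $e$ equals $\beta_r(e)$. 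By Proposition \ref{conditionalcopyprop} and Lemma \ref{conditionalcopylemma} the conditionally independent copies $X',X''$ carry the analogous structure with data $(b',\sigma',\beta',W',N')$, $(b'',\sigma'',\beta'',W'',N'')$, so that $[X',X']^c_r=\sigma'_r(\sigma'_r)^{\intercal}\,dr$, $[X',X'']^c_r=\sigma'_r(\sigma''_r)^{\intercal}\,d[W',W'']_r$, $[X',Y]^c_r=\sigma'_r\,d[Y,W']_r$, and $\Delta X'_r=\beta'_r(e)$, $\Delta X''_r=\beta''_r(e)$ at the jumps of $N'$, $N''$. Finally $F$ has $dr$-coefficient $G$, continuous martingale part $\int H_s\,dW_s$, and pure-jump part $\int\int J_s(e,\cdot)N(de,ds)$, so $F(r,\mu)-F(r-,\mu)=J_r(e,\mu)$ at a jump of $N$ with mark $e$.

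Next I would substitute these data into \eqref{itowentzellc2}. The bracket terms $\partial_{\mu}\partial_x F:d[X',X']^c_r$, $\partial_{\mu\mu}F:d[X',X'']^c_r$, $\partial_{\mu}H_r:d[X',Y]^c_r$ become the explicit $\sigma'_r(\sigma'_r)^{\intercal}dr$, $\sigma'_r(\sigma''_r)^{\intercal}d[W',W'']_r$, $\sigma'_r\,d[Y,W']_r$ integrals of the statement; the term $\int_s^t\partial_{\mu}F(r-,\mu_{r-},X'_{r-})\,dX'_r$ together with the correction $-\sum_{s<r\le t}\partial_{\mu}F(r-,\mu_{r-},X'_{r-})\Delta X'_r$ collapses to $\int_s^t\partial_{\mu}F\,d(X')^c_r=\int_s^t\partial_{\mu}F\,b'_r\,dr+\int_s^t\partial_{\mu}F\,\sigma'_r\,dW'_r$ (removing the jumps of a semimartingale leaves its continuous part); and $\int_s^t H_r(\mu_{r-})\,dY_r$ splits off its pure-jump part to give $\int_s^t H_r(\mu_{r-})\,dW_r+\int_s^t\int_E J_r(e,\mu_r)\nu(de,dr)$ after compensation, exactly as in the proof of Corollary \ref{specific-full-poisson}.

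The delicate part is the family of jump sums $\sum_{s<r\le t}\{\cdots\}$ inside $\overline{\mathbb{E}}[\,\cdot\,|\mathscr{F}]$. I would handle them in three moves: (i) parametrising a jump of $X'$ (resp. $X''$) by its mark turns each sum into an integral against $N'(de,dr)$ (resp. $N''(de,dr)$), with the factor $\Delta Y_r$ multiplying $\tfrac{\delta H_r}{\delta\mu}$ replaced by $J_r(e,\mu_r)$; (ii) since for a fixed $r$ the probability that $N$ jumps at $r$ is zero, the conditional measure flow $t\mapsto\mu_t=\mathbb{P}_{X_t|\mathscr{G}_t}$ is time-continuous, so $\mathbf{1}_{\mu_r=\mu_{r-}}\equiv1$ and the single-jump pieces of the second-order difference $\tfrac{\delta^2F}{\delta\mu^2}(\cdots)$ telescope to zero, leaving only the simultaneous-jump contribution; (iii) taking $\overline{\mathbb{E}}[\,\cdot\,|\mathscr{F}]$ and invoking the compensation (Campbell) formula for the (conditionally) Poisson random measures of the copies replaces $N'(de,dr),N''(de,dr)$ by $\nu(de)\,dr$, and since the surviving integral is then against Lebesgue measure the left limits $X'_{r-},X''_{r-}$ may be upgraded to $X'_r,X''_r$, producing the integrands displayed in the statement. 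Collecting all pieces gives the claimed identity.

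I expect the main obstacle to be step (iii) together with the simultaneous-jump term: one must verify the predictability of the integrands so that the compensation formula applies under $\overline{\mathbb{E}}[\,\cdot\,|\mathscr{F}]$, and one must use the conditional-independence construction of Proposition \ref{conditionalcopyprop} carefully to pin down how $X'$ and $X''$ jump together given $\mathscr{G}$ — this is what fixes the precise form of the $\tfrac{\delta^2F}{\delta\mu^2}$ integrand. The time-continuity of $t\mapsto\mathbb{P}_{X_t|\mathscr{G}_t}$, while intuitively clear, also deserves a short self-contained justification.
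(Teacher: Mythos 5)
Your proposal is correct and follows essentially the same route as the paper, which proves this corollary simply by specialising Theorem \ref{itowentzellc2thm} in the manner of Corollary \ref{specific-full-poisson} and remarking only that the $dW'$ martingale term survives under $\overline{\mathbb{E}}[\,\cdot\,|\mathscr{F}]$ and that $[W',W'']$ need not vanish since $W'$ and $W''$ are not independent given $\mathscr{F}$ — both points you handle by retaining exactly those terms. Your write-up is in fact more detailed than the paper's one-sentence proof, and the delicate issues you flag (conditional compensation of $N'$, simultaneous jumps of the copies determining the $\tfrac{\delta^2F}{\delta\mu^2}$ integrand, time-continuity of the conditional measure flow) are genuine points that the paper leaves implicit.
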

The proof of Corollary \ref{specific-conditional-poisson} is the same as that of Corollary \ref{specific-full-poisson}, except that we need to note that the martingale term can not be removed when taking the conditional expectation and that the variation $[dW',dW'']_\cdot$ is not $0$, because that $W'$ and $W''$ are not independent given $\mathscr{F}$.

\appendix
\section{Some useful convergence lemmas}
\begin{lemma}\label{appendixlemma1}
For fixed $T$, denote $\hat{\mathscr{F}}_T$ the $\sigma$-algebra generated by the processes $\{U_t\}_{0\leq t\leq T}$ and $\{N_t\}_{0\leq t\leq T}$, which is defined in Section 3 and satisfy Assumption 1. Then the set of random variables 
\begin{align*}
\{ \phi(U_{t_1},\ldots,U_{t_n},N_{t_1},\ldots,N_{t_n});t_i\in[0,T],\phi\in C^{\infty}_0((\mathbb{R}^d)^{2n}),n=1,2,\ldots\}
\end{align*}
is dense in $L^2(\Omega,\hat{\mathscr{F}}_T;\mathbb{R})$.
\end{lemma}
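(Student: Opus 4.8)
The plan is to prove the statement by the functional (multiplicative‑system) monotone class theorem. Introduce the class
\[
\mathcal{M}:=\big\{\phi(U_{t_1},\ldots,U_{t_n},N_{t_1},\ldots,N_{t_n}):\ n\geq1,\ t_i\in[0,T],\ \phi\in C^{\infty}_0((\mathbb{R}^d)^{2n})\big\}.
\]
First I would check that $\mathcal{M}$ is stable under products: given two elements of $\mathcal{M}$, merge the two finite families of time points into one common ordered family $\{s_1,\ldots,s_m\}$, rewrite each factor as a function of $(U_{s_1},\ldots,U_{s_m},N_{s_1},\ldots,N_{s_m})$ by composing with coordinate projections, and observe that the product of the two resulting functions still lies in $C^{\infty}_0((\mathbb{R}^d)^{2m})$; hence the product is again in $\mathcal{M}$.

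Next I would verify that $\sigma(\mathcal{M})=\hat{\mathscr{F}}_T$. The inclusion $\sigma(\mathcal{M})\subseteq\hat{\mathscr{F}}_T$ is immediate since every element of $\mathcal{M}$ is $\hat{\mathscr{F}}_T$-measurable. For the reverse inclusion it suffices to recover each $U_t$ and $N_t$ as a pointwise limit of elements of $\mathcal{M}$: choosing $\phi_k\in C^{\infty}_0(\mathbb{R}^d)$ with $\phi_k(x)=x_i$ on $\{|x|\leq k\}$, one gets $\phi_k(U_t)\to U_t^{(i)}$ pointwise for each component $i$, so $U_t^{(i)}$ is $\sigma(\mathcal{M})$-measurable, and likewise for $N_t$. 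Since $\hat{\mathscr{F}}_T$ is by definition generated by $\{U_t\}_{t\in[0,T]}$ and $\{N_t\}_{t\in[0,T]}$, this gives $\hat{\mathscr{F}}_T\subseteq\sigma(\mathcal{M})$.

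Now let $H$ be the closed linear span of $\mathcal{M}$ in $L^2(\Omega,\hat{\mathscr{F}}_T;\mathbb{R})$, and let $\mathcal{H}$ be the set of bounded $\hat{\mathscr{F}}_T$-measurable functions lying in $H$. I would then verify the hypotheses of the functional monotone class theorem: $\mathcal{H}$ is a vector space of bounded functions; it contains $\mathcal{M}$ (whose elements are bounded and trivially in $H$); it contains the constants, because taking $\phi_k\in C^{\infty}_0(\mathbb{R}^d)$ with $0\leq\phi_k\leq1$ and $\phi_k\equiv1$ on $\{|x|\leq k\}$ yields $\phi_k(U_0)\to1$ boundedly, hence in $L^2$ (as $\mathbb{P}$ is a probability measure), so $1\in H$; and it is stable under bounded monotone pointwise limits, since if $0\leq f_n\uparrow f$ with $f_n\in\mathcal{H}$ and $f$ bounded, then $f_n\to f$ in $L^2$ by dominated convergence, so $f\in H$, and $f$ is still bounded and $\hat{\mathscr{F}}_T$-measurable. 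The theorem then gives that $\mathcal{H}$ contains every bounded $\sigma(\mathcal{M})=\hat{\mathscr{F}}_T$-measurable function. Finally, for arbitrary $f\in L^2(\Omega,\hat{\mathscr{F}}_T;\mathbb{R})$ the truncations $f_n:=(-n)\vee f\wedge n$ are bounded and $\hat{\mathscr{F}}_T$-measurable, hence in $H$, and $f_n\to f$ in $L^2$ by dominated convergence; thus $f\in H$, i.e. $H=L^2(\Omega,\hat{\mathscr{F}}_T;\mathbb{R})$, which is the assertion. The argument is entirely standard, and I do not anticipate any genuine obstacle; the only points demanding a little care are the two places where one must use $C^{\infty}_0$-functions to reconstruct the unbounded coordinate maps and the constant $1$ (since $C^{\infty}_0$ contains neither), and the product‑stability of $\mathcal{M}$ after unifying time grids.
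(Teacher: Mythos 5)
Your proof is correct, but it follows a genuinely different route from the paper's. The paper fixes a countable dense set of times $\{t_i\}$, sets $\mathcal{H}_n=\sigma(U_{t_1},\ldots,U_{t_n},N_{t_1},\ldots,N_{t_n})$, invokes the martingale convergence theorem to get $\mathbb{E}[g\,|\,\mathcal{H}_n]\to g$ in $L^2$, writes $\mathbb{E}[g\,|\,\mathcal{H}_n]=g_n(U_{t_1},\ldots,N_{t_n})$ via the Doob--Dynkin lemma, and finally approximates the Borel function $g_n$ by $C_0^\infty$ functions in $L^2$ of the law of the finite-dimensional vector; this is where the paper needs $U$ c\`adl\`ag with no jump at $T$ and $N$ continuous, to guarantee that $\hat{\mathscr{F}}_T=\sigma(\bigcup_n\mathcal{H}_n)$. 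Your multiplicative-system/monotone-class argument avoids the countable time grid entirely (arbitrary finite families of times already generate $\hat{\mathscr{F}}_T$), so it does not use the path regularity of $U$ and $N$ at all and directly produces a dense algebra; the paper's argument, in exchange, yields the approximating sequence constructively as conditional expectations, which is closer in spirit to how the lemma is used in Lemma \ref{appendixlemma2}. Two small points in your write-up deserve a patch. First, functions such as $\phi_k(U_t)$ or $\phi_k(U_0)$ that depend only on the $U$-block are not literally elements of $\mathcal{M}$, since $C_0^\infty((\mathbb{R}^d)^{2n})$ demands compact support in the $N$-coordinates as well; you should multiply by a smooth cutoff $\chi_j(N_t)$ and let $j\to\infty$ before letting $k\to\infty$, which costs nothing but is needed for both the measurability step and the ``constants'' step. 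Second, the monotone class theorem gives you density of the \emph{closed linear span} of $\mathcal{M}$, whereas the lemma asserts density of the set $\mathcal{M}$ itself; to close this, rewrite a finite linear combination over a merged time grid as a single smooth bounded (but not compactly supported) function $\Psi$ of the merged variables, multiply by a smooth cutoff $\chi_k$ equal to $1$ on the ball of radius $k$, and note $\Psi\chi_k\in C_0^\infty$ with $\Psi\chi_k(\mathbf{X})\to\Psi(\mathbf{X})$ in $L^2$ by dominated convergence. With these two routine additions your argument is complete.
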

\begin{proof}
Let $\{t_i\}_{i=1}^{\infty}$ be a dense subset of $[0,T]$. For each $n$, $\mathcal{H}_n$ is the $\sigma$-algebra generated by $(U_{t_1},\ldots,U_{t_n},N_{t_1},\ldots,N_{t_n})$, and this provides $\mathcal{H}_n\subset\mathcal{H}_{n+1}$. Since $U$ is a c{\`a}dl{\`a}g process which has no jump at $T$ and $N$ is continuous, $\hat{\mathscr{F}}_T$ is the smallest $\sigma$-algebra containing all $\mathcal{H}_n$. For any $g\in L^2(\Omega,\hat{\mathscr{F}}_T;\mathbb{R})$, by the martingale convergence theorem (Corollary C.9 in \cite{oksendal2003stochastic}) we have 
\begin{align*}
\lim_{n\to\infty}\mathbb{E}[g|\mathcal{H}_n]=\mathbb{E}[g|\hat{\mathscr{F}}_T]=g, \ \ {\text in }\ L^2.
\end{align*}
Then, due to Doob-Dynkin Lemma (Lemma 2.1.2 in \cite{oksendal2003stochastic}), for any $n$, there exists a Borel measurable function $g_n:(\mathbb{R}^d)^{2n}\to\mathbb{R}$ such that
\begin{align*}
\mathbb{E}[g|\mathcal{H}_n]=g_n(U_{t_1},\ldots,U_{t_n},N_{t_1},\ldots,N_{t_n}).
\end{align*}
Furthermore, for each $n$, $g_n(U_{t_1},\ldots,U_{t_n},N_{t_1},\ldots,N_{t_n})$ can be  approximated by functions $\{\phi_{m,n}(U_{t_1},\ldots,U_{t_n},N_{t_1},\ldots,N_{t_n})\}_{m\in\mathbb{N}^*}$ as $m\to\infty$, where $\phi_{m,n}\in C^{\infty}_0((\mathbb{R}^d)^{2n})$.
\end{proof}
\begin{lemma}\label{appendixlemma2}
Let Assumption 1 and 2 hold, and the random field $F$ be defined by \eqref{decompositionFmu}. If $G:\Omega\times[0,T]\times\mathcal{P}_2(\mathbb{R}^d)\rightarrow\mathbb{R}$ is continuous in time, then for any $\eps>0$, there exists a partition $0=t_0<t_1<\ldots<t_n=T$ and a function  $g_n\in C_0^{0,1,\infty}([0,T]\times\mathcal{P}_2(\mathbb{R}^d)\times(\mathbb{R}^d)^{2n})$ for some $n\in\mathbb{N}$, such that for any $\mu$
\begin{align*}
    \mathbb{E}\bigg[ \int_0^T\, |G_s(\mu)-g_n(s,\mu,U_{s\wedge t_1},\ldots,U_{s\wedge t_n},N_{s\wedge t_1},\ldots,N_{s\wedge t_n})|^2ds \bigg]<\eps.
\end{align*}
\end{lemma}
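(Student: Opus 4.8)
\emph{Proof proposal.} The plan is to smooth $G$ in three successive steps --- first in the randomness, then in the Euclidean variables, then in time --- treating the measure argument $\mu$ as an inert parameter, so that the $\mathcal C^1(\mathcal P_2(\mathbb R^d))$-regularity furnished by Definition \ref{def RF-P-C2} is retained at every step. Throughout one uses that, by Assumption 2, $G$ is $\{\hat{\mathscr F}_t\}_{t\in[0,T]}$-adapted, and that the a.s.\ bound in Definition \ref{def RF-P-C2} gives $\int_0^T\sup_{\mu}|G_s(\mu)|^2\,ds\le C$, so that $s\mapsto G_s(\mu)$ lies in $L^2(\Omega\times[0,T])$ for every $\mu$.

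\textbf{Step 1 (conditioning).} Fix a countable dense set $\{t_i\}_{i\ge1}\subset[0,T]$ and put $\hat{\mathscr F}^{(n)}_s:=\sigma(U_{s\wedge t_1},\dots,U_{s\wedge t_n},N_{s\wedge t_1},\dots,N_{s\wedge t_n})$; since $U$ is c\`adl\`ag with no jump at $T$ and $N$ is continuous, $\hat{\mathscr F}^{(n)}_s\uparrow\hat{\mathscr F}_s$, exactly as in the proof of Lemma \ref{appendixlemma1}. Setting $G^{(n)}_s(\mu):=\mathbb E[\,G_s(\mu)\mid\hat{\mathscr F}^{(n)}_s\,]$, the martingale convergence theorem yields $\mathbb E\int_0^T|G_s(\mu)-G^{(n)}_s(\mu)|^2\,ds\to0$; moreover $G^{(n)}$ is adapted, and since $\tfrac{\delta G}{\delta\mu}$ and $\partial_\mu G$ are bounded, conditional expectation commutes with these derivatives (dominated convergence), so $G^{(n)}$ remains $\mathcal C^1$ in $\mu$ with the same bounds. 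By the Doob-Dynkin lemma, $G^{(n)}_s(\mu)=\varphi_n(s,\mu,U_{s\wedge t_1},\dots,N_{s\wedge t_n})$ for a Borel map $\varphi_n$.

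\textbf{Steps 2 and 3 (space and time smoothing).} For fixed $n$ I would next mollify $z\mapsto\varphi_n(s,\mu,z)$ on $(\mathbb R^d)^{2n}$ against a bump kernel and multiply by a smooth cutoff in $z$; the a.s.\ bounds on $G$ and its $\mu$-derivatives ensure that this operation preserves the $\mathcal C^1(\mathcal P_2)$-regularity while producing a $C_0^\infty$-in-$z$ function, and the resulting $L^2(\Omega\times[0,T])$-error is made small exactly as in the last step of the proof of Lemma \ref{appendixlemma1}. Finally, following \eqref{In}, I would replace the (still only measurable-in-$t$) approximant $\phi_n$ by $g_n(t,\mu,z):=\int_0^t\psi_m(r-t)\,\phi_n(r,\mu,z)\,dr$ with $\psi_m$ a one-sided mollifier; this produces continuity in $t$ and, by $L^2$-continuity of translations together with the uniform bound on $G$, keeps the total error below $\eps$ once the discretisation, the mollification radii and $m$ are chosen large enough. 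The composed function $g_n$ then belongs to $C_0^{0,1,\infty}([0,T]\times\mathcal P_2(\mathbb R^d)\times(\mathbb R^d)^{2n})$, as claimed.

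\textbf{The main obstacle.} The delicate point is the uniformity in $\mu\in\mathcal P_2(\mathbb R^d)$: a priori the convergence in Step 1 is only pointwise in $\mu$, and a genuine $W_2$-uniform bound over all of $\mathcal P_2(\mathbb R^d)$ cannot be expected (consider point masses escaping to infinity). I would handle this by noting that $\sup_{\mu,x}|\partial_\mu G_s(\mu,x)|\in L^2(0,T)$ makes the whole family $\{G_s(\omega,\cdot)\}$, and every approximant constructed above --- conditional expectation, convolution and time-averaging do not enlarge the Lions-Lipschitz modulus --- equi-Lipschitz on $(\mathcal P_2(\mathbb R^d),W_2)$ with a common $L^2$-in-$s$ modulus; together with the separability of $\mathcal P_2(\mathbb R^d)$ this upgrades the pointwise-in-$\mu$ convergence to locally uniform convergence, in particular uniform over the relatively compact range of a c\`adl\`ag measure flow $(\mu_r)_{r\in[0,T]}$, which is what is needed where the lemma is applied in \eqref{L2convengence}. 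A secondary, routine point that must still be checked is the commutation of $\tfrac{\delta}{\delta\mu}$ with conditional expectation and with convolution, since it is precisely what guarantees $g_n\in C_0^{0,1,\infty}$.
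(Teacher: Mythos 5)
Your route (condition on $\hat{\mathscr F}^{(n)}_s:=\sigma(U_{s\wedge t_1},\dots,N_{s\wedge t_n})$, factor through Doob--Dynkin, then mollify in $z$ and in $t$) is genuinely different from the paper's, and it has a gap at its central step. The Doob--Dynkin lemma gives you, \emph{separately for each fixed pair} $(s,\mu)$, a Borel map $\varphi_n(s,\mu,\cdot)$ with $G^{(n)}_s(\mu)=\varphi_n(s,\mu,U_{s\wedge t_1},\dots,N_{s\wedge t_n})$, determined only up to null sets of the law of the sampled vector, and these null sets may vary with $(s,\mu)$. Nothing guarantees that the representatives can be chosen so that $(s,\mu,z)\mapsto\varphi_n(s,\mu,z)$ is even jointly measurable, let alone $\mathcal C^1$ in $\mu$ for fixed $z$. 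Your observation that conditional expectation commutes with $\frac{\delta}{\delta\mu}$ shows that the \emph{random variable} $\mu\mapsto G^{(n)}_s(\mu)$ stays $\mathcal C^1$, but that is not the same as regularity of the deterministic kernel $\mu\mapsto\varphi_n(s,\mu,z)$, which is what membership of $g_n$ in $C_0^{0,1,\infty}$ requires; this is the content you have implicitly assumed rather than proved (a regular-conditional-probability construction could supply it, but that is exactly the missing work). A second, related problem: convolving $\varphi_n(s,\mu,\cdot)$ against a Lebesgue bump kernel converges back to $\varphi_n$ only Lebesgue-a.e.; if the law of $(U_{s\wedge t_1},\dots,N_{s\wedge t_n})$ is singular with respect to Lebesgue measure (e.g.\ $U$ has a deterministic or atomic component), the mollified function composed with the process need not be close to $G^{(n)}_s(\mu)$ in $L^2(\Omega)$. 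The final step of Lemma~\ref{appendixlemma1}, which you invoke here, is a density statement in $L^2$ of the image law, not a convolution estimate, and in the paper it is only ever applied to objects with no $\mu$-dependence.

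The paper's proof avoids both difficulties by separating the variables \emph{before} any Doob--Dynkin or density argument is used: time-continuity gives a time-simple approximation with coefficients $\phi_j(\omega,\mu)=G_{t_{j-1}}(\omega,\mu)$; each $\phi_j$ is then approximated by finite sums $\sum_i\mathbf 1_{A^j_i}(\omega)\,h^j_i(\mu)$ with $h^j_i\in\mathcal C^1(\mathcal P_2(\mathbb R^d))$ and $A^j_i\in\hat{\mathscr F}_{t_{j-1}}$; and only the purely $\omega$-dependent indicators $\mathbf 1_{A^j_i}$ are replaced, via Lemma~\ref{appendixlemma1}, by $C^\infty_0$ functions of the sampled path. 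The resulting $g_n$ is a finite sum of products $f^j_i(z)\,h^j_i(\mu)\,\varphi_j(t)$, so its joint regularity in $(t,\mu,z)$ is immediate. To repair your argument you would need either such a separation step or a measurable, $\mu$-regular selection of the Doob--Dynkin kernels. (Your "main obstacle" about uniformity over $\mu$ is comparatively minor: the lemma is only used pointwise in $\mu$ in \eqref{L2convengence}.)
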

For $H:\Omega\times[0,T]\times\mathcal{P}_2(\mathbb{R}^d)\rightarrow\mathbb{R}^l$ in \eqref{decompositionFmu}, when it is continuous in time, the same result holds. The proof of this lemma follows a similar approach to that of Lemma 4.5 in \cite{qiu2019uniqueness}, and a sketch is provided below.
\begin{proof}
First, the dominated convergence theorem indicates that for any $\mu$, $G_\cdot(\cdot,\mu)$ can be approximated in $L^2(\Omega\times[0,T])$ by random fields of the form:
\begin{align}\label{eq lemma5 time simple}
\hat{g}^l(t,\omega,\mu):=\phi_1(\omega,\mu)\mathbf{1}_{[0,t_1]}(t)+\sum_{j=2}^l\phi_j(\omega,\mu)\mathbf{1}_{(t_{j-1},t_{j}]}(t),
\end{align}
where $0=t_0<t_1<\ldots<t_n=T$, for $j=1,\ldots,l$, $\phi_j(\cdot,\cdot):=G_{t_{j-1}}(\cdot,\cdot):\Omega\times\mathcal{P}_2(\mathbb{R}^d)\to\mathbb{R}$, $\phi_j$ is $\mathcal{C}^1$ differentiable in $\mu$ and $\hat{\mathscr{F}}_{t_{j-1}}-$measurable.\\
Furthermore, since $\mathcal{P}_2(\mathbb{R}^d)$ is a Polish space, for each $j$, $\phi_j$ may be approximated monotonically (see Lemma 1.3 in \cite{da2014stochastic}, for instance) by simple random variables of the form:
\begin{align*}
\sum_{i=1}^{l_j}\mathbf{1}_{A^j_i}(\omega)h^j_i(\mu),\quad \mbox{with}\ \ h^j_i\in\mathcal{C}^1(\mathcal{P}_2(\mathbb{R}^d))),\ A^j_i\in\hat{\mathscr{F}}_{t_{j-1}},\ i=1,\ldots,l_j.
\end{align*}
Then, Lemma \ref{appendixlemma1} implies that each $\mathbf{1}_{A^j_i}(\omega)$ may be approximated in $L^2(\Omega,\hat{\mathscr{F}}_{t_{j-1}})$ by functions 
\begin{align*}
\big\{ f(U_{\hat{t}_1},\ldots,U_{\hat{t}_{l_i^j}},N_{\hat{t}_1},\ldots,N_{\hat{t}_{l_i^j}}):\hat{t}_r\in[0,t_{j-1}],\ r=1,\ldots,l^j_i,\ f_i^j\in C^{\infty}_0(\mathbb{R}^{2l^j_i})\big\}.
\end{align*}
In addition, each function $\mathbf{1}_{(t_{j-1},t_j]}$ in \eqref{eq lemma5 time simple} may be increasingly approximated by nonnegative functions $\varphi_j\in C^{\infty}((t_{j-1},T])$. Therefore, we can define the approximating functions as 
\begin{align*}
g_n(t,&\mu,U_{t\wedge \hat{t}_1},\ldots,U_{t\wedge \hat{t}_n},N_{t\wedge \hat{t}_1},\ldots,N_{t\wedge \hat{t}_n})\\
:=&\sum_{j=1}^l\sum_{i=1}^{l_j}f_i^j(U_{\hat{t}_1\wedge t_{j-1}},\ldots,U_{\hat{t}_{n}\wedge t_{j-1}},N_{\hat{t}_1\wedge t_{j-1}},\ldots,N_{\hat{t}_{n}\wedge t_{j-1}})h^j_i(\mu)\varphi_j(t),
\end{align*}
where $0=\hat{t}_0<\hat{t}_1<\ldots<\hat{t}_n=T$, $f_i^j,\varphi_j$ and $h^j_i$ are defined above. Then it is clear that the functions $\{g_n\}_{n\in\mathbb{N}^*}$ maintain $\mathcal{C}^1$-differentiability in $\mathcal{P}_2(\mathbb{R}^d)$. Thus, the required approximation is obtained.
\end{proof}
\section{Leibniz rule for It\^o integral}
In this subsection, we provide a general Leibniz rule to interchange the Fr{\'e}chet derivatives with It\^o integral. While for Leibniz rule to exchange the Fr{\'e}chet derivatives with the Lebesgue integral, the readers are referred to \cite{kammar2016note}.
\begin{lemma}\label{lemma Leibniz}
Let $H$ be a Banach space and $f$ be a progressively measurable process satisfying, for any $t\in[0,T]$, $f(t,\cdot)$ is Fr{\'e}chet differentiable at any $x\in H$ whose derivative $Df(t,x)\in H^*$ and the following integrability condition holds
\begin{align}\label{integrabilityDf}
\mathbb{E}\bigg[ \int_0^T\, \|Df(t,x)\|^2_{H^*}dt \bigg]<\infty,\quad\forall x\in H.
\end{align}
Then we have for any $x\in H$ and $h\in H$
\begin{align}\label{eq leibniz rule}
D\bigg( \int_0^T\,f(t,x)dW_t \bigg)(h)=\int_0^T\, Df(t,x)(h)dW_t,\ \ \ a.s.
\end{align}
\end{lemma}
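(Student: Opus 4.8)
The plan is to fix $x,h\in H$, rewrite everything through difference quotients, and then transfer a purely deterministic $L^2$-convergence statement across the It\^o isometry. First I would check that the right-hand side of \eqref{eq leibniz rule} is well defined: the process $t\mapsto Df(t,x)(h)$ is progressively measurable, being the pointwise limit as $n\to\infty$ of the progressively measurable processes $\Delta_{1/n}(t):=n\bigl(f(t,x+n^{-1}h)-f(t,x)\bigr)$; and by \eqref{integrabilityDf} together with $|Df(t,x)(h)|\le\|Df(t,x)\|_{H^*}\|h\|_H$ it belongs to $L^2(\Omega\times[0,T])$, so $\int_0^T Df(t,x)(h)\,dW_t$ exists as an It\^o integral. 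As a function of $h$ this integral is linear and bounded from $H$ into $L^2(\Omega)$ (again by \eqref{integrabilityDf}), which is precisely the natural candidate for the Fr\'echet derivative.

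Next, for $\varepsilon\neq0$ the linearity of the It\^o integral gives
\[
\frac1\varepsilon\Bigl(\int_0^T f(t,x+\varepsilon h)\,dW_t-\int_0^T f(t,x)\,dW_t\Bigr)=\int_0^T\Delta_\varepsilon(t)\,dW_t,\qquad \Delta_\varepsilon(t):=\frac{f(t,x+\varepsilon h)-f(t,x)}{\varepsilon},
\]
and by the It\^o isometry
\[
\mathbb{E}\Bigl[\Bigl|\int_0^T\bigl(\Delta_\varepsilon(t)-Df(t,x)(h)\bigr)\,dW_t\Bigr|^2\Bigr]=\mathbb{E}\int_0^T\bigl|\Delta_\varepsilon(t)-Df(t,x)(h)\bigr|^2\,dt .
\]
Thus the lemma reduces to the analytic claim that $\Delta_\varepsilon\to Df(\cdot,\cdot,x)(h)$ in $L^2(\Omega\times[0,T])$ as $\varepsilon\to0$; equivalently, that the (isometric, hence closed) It\^o-integral operator may be applied in the limit. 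Pointwise in $(\omega,t)$ the convergence $\Delta_\varepsilon(t)\to Df(t,x)(h)$ is exactly Fr\'echet differentiability of $f(t,\omega,\cdot)$ at $x$. To upgrade to $L^2$ I would use the fundamental theorem of calculus along the segment, $\Delta_\varepsilon(t)=\int_0^1 Df\bigl(t,x+s\varepsilon h\bigr)(h)\,ds$, so that $|\Delta_\varepsilon(t)|^2\le\|h\|_H^2\int_0^1\|Df(t,x+s\varepsilon h)\|_{H^*}^2\,ds$ for $|\varepsilon|\le1$, and then invoke \eqref{integrabilityDf} on the segment $\{x+\eta:\|\eta\|_H\le\|h\|_H\}$ to produce a fixed $L^2(\Omega\times[0,T])$ majorant; dominated convergence then closes the step. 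Passing to a subsequence yields the almost sure identity \eqref{eq leibniz rule}, and the uniform-in-direction form of the same remainder estimate promotes the directional statement to genuine Fr\'echet differentiability of $x\mapsto\int_0^T f(t,x)\,dW_t$.

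The hard part will be the domination step: \eqref{integrabilityDf} is only assumed pointwise in $x$, so producing a single $L^2(\Omega\times[0,T])$ function dominating $\|Df(t,x+\eta)\|_{H^*}$ for all small $\eta$ at once requires some local uniformity of the map $x\mapsto Df(\cdot,\cdot,x)\in L^2(\Omega\times[0,T];H^*)$ (for instance continuity, which holds in every situation where this lemma is invoked in the paper, since there the $\mu$-derivatives of $F_0,G,H$ are continuous with the stated bounds). One should also record the mild point that $\int_0^T f(t,x)\,dW_t$ is itself well defined under the standing integrability hypotheses. Finally, for the semimartingale integrator $Y=N+U$ actually used in \eqref{interchangederivativewithitointegral}, the continuous-martingale part $N$ is treated exactly as above, while the finite-variation part $U$ is handled by ordinary Fubini and dominated convergence, so the interchange extends from $W$ to $Y$.
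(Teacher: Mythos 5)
Your proposal follows essentially the same route as the paper's proof: rewrite the difference quotient of the stochastic integral as the stochastic integral of the difference quotient, reduce the claim to $L^2(\Omega\times[0,T])$ convergence of $\Delta_\varepsilon$ via the It\^o isometry, and close with dominated convergence. The domination issue you flag is real, but the paper's own proof treats it even more loosely --- it simply bounds $|\Delta_\varepsilon(t)|^2\le\|Df(t,x)\|^2_{H^*}\|h\|^2_H+o(1)$ pointwise and invokes dominated convergence without exhibiting a single $\varepsilon$-uniform integrable majorant --- so your segment representation together with the local-uniformity caveat is, if anything, a more careful account of the same step.
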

\begin{proof}
Consider the mapping $\mathbf{T}:H\to\mathbb{R}$ defined as following
\begin{align*}
\mathbf{T}(h):=\int_0^T\, Df(t,x)(h)dW_t,\quad \forall x\in H.
\end{align*}
By the definition of Fr{\'e}chet derivatives, we have 
\begin{align*}
Df(t,x)(h):=\lim_{\eps\to 0}\frac{f(t,x+\eps h)-f(t,x)}{\eps},
\end{align*}
which is $\mathscr{F}_t-$measurable, hence the stochastic integral $\mathbf{T}(h)$ is well-defined.\\
To prove \eqref{eq leibniz rule}, we derive from the definition of Fr{\'e}chet derivatives that for any $h\in H$ satisfying $\|h\|_H=1$,
\begin{align*}
D\bigg( \int_0^T\,f(t,x)dW_t \bigg)(h)=&\lim_{\eps\to 0}\frac{1}{\eps}\bigg( \int_0^T\,f(t,x+\eps h)dW_t-\int_0^T\,f(t,x)dW_t \bigg)\\
=&\lim_{\eps\to 0}\int_0^T\, \frac{f(t,x+\eps h)-f(t,x)}{\eps}dW_t.
\end{align*}
By the arbitrariness of $h$, we only need to prove that
\begin{align*}
\int_0^T\, \frac{f(t,x+\eps h)-f(t,x)}{\eps}dW_t\to\int_0^T\,Df(t,x)(h)dW_t,\ \ \ \mbox{in} \ L^2.
\end{align*}
It follows the definition of Fr{\'e}chet derivative that
\begin{align*}
\frac{f(t,x+\eps h)-f(t,x)}{\eps}=Df(t,x)(h)+o(1).
\end{align*}
Therefore,
\begin{align*}
\lim_{\eps\to 0}\frac{f(t,x+\eps h)-f(t,x)}{\eps}=Df(t,x)(h)
\end{align*}
and
\begin{align*}
\bigg|\frac{f(t,x+\eps h)-f(t,x)}{\eps}\bigg|^2\leq \|Df(t,x)\|^2_{H^*}\|h\|^2_H+o(1).
\end{align*}
Moreover, by the integrability condition \eqref{integrabilityDf}, it holds 
\begin{align*}
\mathbb{E}\bigg[ \int_0^T\, |Df(t,x)(h)|^2dt \bigg]\leq \mathbb{E}\bigg[ \int_0^T\, \|Df(t,x)\|^2_{H^*}dt \bigg]\cdot\|h\|_H<\infty.
\end{align*}
Hence, due to the dominated convergence theorem, we obtain 
\begin{align*}
\lim_{\eps\to 0}\int_0^T\, \frac{f(t,x+\eps h)-f(t,x)}{\eps}dW_t=\int_0^T\,Df(t,x)(h)dW_t,\ \ \ \mbox{in} \ L^2.
\end{align*}
\end{proof}

%

\bibliographystyle{abbrv}
\bibliography{Ito_wentzell_lions_jump}

@article{guo2024ito,
  title={It{\^o}'s formula for flows of conditional measures on semimartingales},
  author={Guo, Xin and Zhang, Jiacheng},
  journal={arXiv:2404.11167},
  year={2024}
}

@article{FadleNizar,
  title={It\^o and {I}t\^o-{W}entzell chain rule for flows of conditional laws of continuous semimartingales: an easy approach},
  author={Fadle, Assil and Touzi, Nizar},
  journal={arXiv:2404.11010v2},
  year={2024}
}

@article{WuZhang,
  title={Viscostiy solutions of parabolic master equations and McKean-Vlasov SDEs with closed-loop controls},
  author={Wu, Cong and Zhang, Jianfeng},
  journal={The Annals of Applied Probability},
  volume={30},
  number={2},
  pages={936--986},
  year={2020},
  publisher={Elsevier}
}

@article{guo2023ito,
  title={It{\^o}’s formula for flows of measures on semimartingales},
  author={Guo, Xin and Pham, Huy{\^e}n and Wei, Xiaoli},
  journal={Stochastic Processes and Their Applications},
  volume={159},
  pages={350--390},
  year={2023},
  publisher={Elsevier}
}

@book{applebaum2009levy,
  title={L{\'e}vy processes and stochastic calculus},
  author={Applebaum, David},
  year={2009},
  publisher={Cambridge University Press}
}

@article{dos2023ito,
  title={It{\^o}-{W}entzell-{L}ions formula for measure dependent random fields under full and conditional measure flows},
  author={Dos Reis, Gon{\c{c}}alo and Platonov, Vadim},
  journal={Potential Analysis},
  volume={59},
  number={3},
  pages={1313--1344},
  year={2023},
  publisher={Springer}
}

@article{ocone1989generalized,
  title={A generalized {I}t{\^o}-{V}entzell formula. {A}pplication to a class of anticipating stochastic differential equations},
  author={Ocone, Daniel and Pardoux, {\'E}tienne},
  journal={Annales de l'IHP Probabilit{\'e}s et statistiques},
  volume={25},
  number={1},
  pages={39--71},
  year={1989}
}

@article{rudiger2004stochastic,
  title={Stochastic integration with respect to compensated Poisson random measures on separable Banach spaces},
  author={R{\"u}diger, Barbara},
  journal={Stochastics and Stochastic Reports},
  volume={76},
  number={3},
  pages={213--242},
  year={2004},
  publisher={Taylor \& Francis}
}

@techreport{cardaliaguet2010notes,
  title={\textit{Notes on mean field games}},
  author={Cardaliaguet, Pierre},
  year={2010},
  institution={Technical report}
}

@article{chassagneux2014probabilistic,
  title={A probabilistic approach to classical solutions of the master equation for large population equilibria},
  author={Chassagneux, Jean-Fran{\c{c}}ois and Crisan, Dan and Delarue, Fran{\c{c}}ois},
  journal={Memoirs of the American Mathematical Society},
  volume={280},
  number={1379},
  year={2022},
  pages={123 pages},
  publisher={American Mathematical Society}
}

@book{carmona2018probabilistic,
  title={Probabilistic theory of mean field games with applications I-II},
  author={Carmona, Ren{\'e} and Delarue, Fran{\c{c}}ois},
  year={2018},
  publisher={Springer}
}

@book{cardaliaguet2019master,
  title={The master equation and the convergence problem in mean field games},
  author={Cardaliaguet, Pierre and Delarue, Fran{\c{c}}ois and Lasry, Jean-Michel and Lions, Pierre-Louis},
  year={2019},
  publisher={Princeton University Press}
}

@article{gangbo2019differentiability,
  title={On differentiability in the {W}asserstein space and well-posedness for {H}amilton--{J}acobi equations},
  author={Gangbo, Wilfrid and Tudorascu, Adrian},
  journal={Journal de Math{\'e}matiques Pures et Appliqu{\'e}es},
  volume={125},
  pages={119--174},
  year={2019},
  publisher={Elsevier}
}

@article{cox2024controlled,
  title={Controlled measure-valued martingales: a viscosity solution approach},
  author={Cox, Alexander MG and K{\"a}llblad, Sigrid and Larsson, Martin and Svaluto-Ferro, Sara},
  journal={The Annals of Applied Probability},
  volume={34},
  number={2},
  pages={1987--2035},
  year={2024},
  publisher={Institute of Mathematical Statistics}
}

@phdthesis{gaviraghi2017theoretical,
  title={Theoretical and numerical analysis of Fokker-Planck optimal control problems for jump-diffusion processes},
  author={Gaviraghi, Beatrice},
  year={2017},
  school={Universit{\"a}t W{\"u}rzburg}
}

@book{oksendal2019stochastic,
  title={Applied stochastic control of jump diffusions},
  author={{\O}ksendal, Bernt and Sulem, Agnes},
  year={2019},
  publisher={Springer}
}

@article{meng2023optimal,
  title={Optimal Controls of Stochastic Differential Equations with Jumps and Random Coefficients: Stochastic {H}amilton--{J}acobi--{B}ellman Equations with Jumps},
  author={Meng, Qingxin and Dong, Yuchao and Shen, Yang and Tang, Shanjian},
  journal={Applied Mathematics \& Optimization},
  volume={87},
  number={1},
  pages={3},
  year={2023},
  publisher={Springer}
}

@article{ventzel1965equations,
  title={On equations of theory of conditional {M}arkov processes},
  author={Wentzel, AD},
  journal={Theory of Probability and Its Applications},
  volume={10},
  number={2},
  pages={357--361},
  year={1965}
}

@book{kunita1990stochastic,
  title={Stochastic flows and stochastic differential equations},
  author={Kunita, Hiroshi},
  year={1990},
  publisher={Cambridge university press}
}

@article{kunita1981some,
  title={Some extensions of {I}t{\^o}'s formula},
  author={Kunita, Hiroshi},
  journal={S{\'e}minaire de probabilit{\'e}s de Strasbourg},
  volume={15},
  pages={118--141},
  year={1981}
}

@article{peng1992stochastic,
  title={Stochastic {H}amilton--{J}acobi--{B}ellman equations},
  author={Peng, Shige},
  journal={SIAM Journal on Control and Optimization},
  volume={30},
  number={2},
  pages={284--304},
  year={1992},
  publisher={SIAM}
}

@article{qiu2018viscosity,
  title={Viscosity solutions of stochastic {H}amilton--{J}acobi--{B}ellman equations},
  author={Qiu, Jinniao},
  journal={SIAM Journal on Control and Optimization},
  volume={56},
  number={5},
  pages={3708--3730},
  year={2018},
  publisher={SIAM}
}

@article{hu2002semi,
  title={On semi-linear degenerate backward stochastic partial differential equations},
  author={Hu, Ying and Ma, Jin and Yong, Jiongmin},
  journal={Probability Theory and Related Fields},
  volume={123},
  number={3},
  pages={381--411},
  year={2002},
  publisher={Springer}
}

@book{bensoussan2013mean,
  title={Mean field games and mean field type control theory},
  author={Bensoussan, Alain and Frehse, Jens and Yam, Phillip},
  year={2013},
  publisher={Springer}
}

@book{oksendal2003stochastic,
  title={Stochastic differential equations: an introduction with applications},
  author={{\O}ksendal, Bernt},
  year={2003},
  publisher={Springer}
}

@article{qiu2019uniqueness,
  title={Uniqueness of viscosity solutions of stochastic {H}amilton-{J}acobi equations},
  author={Qiu, Jinniao and Wei, Wenning},
  journal={Acta Mathematica Scientia},
  volume={39},
  number={3},
  pages={857--873},
  year={2019},
  publisher={Springer}
}

@book{da2014stochastic,
  title={Stochastic equations in infinite dimensions},
  author={Da Prato, Giuseppe and Zabczyk, Jerzy},
  year={2014},
  publisher={Cambridge university press}
}

@article{cheung2025viscosity,
  title={Viscosity Solutions of a Class of Second Order {H}amilton--{J}acobi--{B}ellman Equations in the {W}asserstein Space},
  author={Cheung, Hang and Tai, Ho Man and Qiu, Jinniao},
  journal={Applied Mathematics \& Optimization},
  volume={91},
  number={1},
  pages={23},
  year={2025},
  publisher={Springer}
}

@article{pham2017dynamic,
  title={Dynamic programming for optimal control of stochastic {M}cKean--{V}lasov dynamics},
  author={Pham, Huy{\^e}n and Wei, Xiaoli},
  journal={SIAM Journal on Control and Optimization},
  volume={55},
  number={2},
  pages={1069--1101},
  year={2017},
  publisher={SIAM}
}

@article{mou2019wellposedness,
  title={Wellposedness of second order master equations for mean field games with nonsmooth data},
  author={Mou, Chenchen and Zhang, Jianfeng},
  journal={Memoirs of the American Mathematical Society},
  volume={302},
  number={1515},
  year={2024},
  pages={86 pages},
  publisher={American Mathematical Society}
}

@article{kammar2016note,
  title={A note on {F}r{\'e}chet diffrentiation under {L}ebesgue integrals},
  author={Kammar, Ohad},
  journal={Preprint},
  year={2016}
}

@article{talbi2023dynamic,
  title={Dynamic programming equation for the mean field optimal stopping problem},
  author={Talbi, Mehdi and Touzi, Nizar and Zhang, Jianfeng},
  journal={SIAM Journal on Control and Optimization},
  volume={61},
  number={4},
  pages={2140--2164},
  year={2023},
  publisher={SIAM}
}

@article{buckdahn2017mean,
  title={Mean-field stochastic differential equations and associated PDEs},
  author={Buckdahn, Rainer and Li, Juan and Peng, Shige and Rainer, Catherine},
  journal={The Annals of Probability},
  volume={45},
  number={2},
  pages={824--878},
  year={2017}
}

@article{li2018mean,
  title={Mean-field forward and backward SDEs with jumps and associated nonlocal quasi-linear integral-PDEs},
  author={Li, Juan},
  journal={Stochastic Processes and their Applications},
  volume={128},
  number={9},
  pages={3118--3180},
  year={2018},
  publisher={Elsevier}
}

@article{HaoLi,
  title={Mean-field SDEs with jumps and nonlocal integral-PDEs},
  author={Hao, Tao and Li, Juan},
  journal={Nonlinear Differential Equations and Applications},
  volume={23},
  number={17},
  pages={51 pages},
  year={2016},
  publisher={Springer}
}

\end{document}